\documentclass[12t,reqno,twoside]{amsart}
\usepackage{enumerate}
\usepackage[english]{babel}
\usepackage{hyperref}
\usepackage{amssymb,amsthm,amsmath,eucal,mathrsfs}
\usepackage{bm}

\allowdisplaybreaks

\setlength{\textwidth}{17cm}

\setlength{\textheight}{21.8cm}

\hoffset=-55pt

%
%

\usepackage{amsfonts}
\usepackage{graphicx}
\usepackage[utf8]{inputenc}
\usepackage{lscape}
\usepackage{amstext}
\usepackage{xcolor}
\usepackage{float}
\usepackage{cancel}
\usepackage{mathrsfs}
\usepackage{epsfig}
\usepackage{url}
\usepackage{fancyhdr,lipsum}
\usepackage{pspicture}
\usepackage{wrapfig}
\usepackage{graphicx}
\usepackage{multirow}
\usepackage{lmodern}

\usepackage[T1]{fontenc}

\usepackage{cite}

\usepackage{dsfont}

\usepackage{hyperref}

\usepackage{cleveref}

\newtheorem{theorem}{Theorem}[section]

\newtheorem{lemma}[theorem]{Lemma}

\newtheorem{proposition}[theorem]{Proposition}

\theoremstyle{definition}

\newtheorem{remark}[theorem]{Remark}

\numberwithin{equation}{section}

\providecommand{\norm}[1]{\lVert#1\rVert} 

\newcommand\restr[2]{{
  \left.\kern-\nulldelimiterspace 
  #1 
  \vphantom{\big|} 
  \right|_{#2} 
  }}

\usepackage{eucal}

\usepackage{enumitem}
\usepackage[font=it]{caption}
\usepackage{caption}
\usepackage{subcaption}
\setlength{\abovecaptionskip}{0pt}
\setlength{\belowcaptionskip}{3pt}
\setlength{\parindent}{15pt}
\setlength{\parskip}{3pt}
\setenumerate{leftmargin=*} 

\usepackage{algorithmic}

\usepackage{mathtools}
\newcounter{mysubequations}

\renewcommand{\themysubequations}{(\arabic{mysubequations})}

\newcommand{\mysubnumber}{\refstepcounter{mysubequations}\themysubequations}


\title[Extraction of the mass density using highly dense small inclusions as contrast agents]{Extraction of the mass density using only the ${\mathtt{P}}$-parts of the elastic fields generated by injected highly dense small inclusions}
\address{ Corresponding Author: Divya Gangadaraiah}
\author{ Durga Prasad Challa$^{*}$, Divya Gangadaraiah$^{\dag}$,  Mourad Sini$^{\ddag}$ }
\address{$^{*}$Department of Mathematics and Statistics, IIT Tirupati, India.} 
\email{durga.challa@iittp.ac.in}
\address{$^{\dag}$Department of Mathematics and Statistics, IIT Tirupati, India.}
\email{divya.kavyashree@gmail.com}
\address{$^{\ddag}$RICAM, Austrian Academy of Sciences, Altenberger Strasse 69, A-4040, Linz, Austria.}
\email{mourad.sini@oeaw.ac.at}
\address{M. Sini is Partially supported by the Austrian Science Fund (FWF): P 30756-NBL and
P 32660.} 


\begin{document}
\maketitle

\begin{abstract}
  
We propose a reconstruction method to extract the variable mass density from the elastic farfields, with a single incident direction, measured before and after injecting highly dense small scaled inclusions. We take as a model, the Lam\'e system where the mass density is the unknown in $\Omega$ and the Lam\'e parameters are kept as known constants.
The injected small/dense inclusion,  $D:=z+aB\,(\subset\joinrel\subset\Omega)$  with $z$ as its location, $a\ll1$ as its maximum radius and $B$ of unit volume, generates a sequences of resonant frequencies. These special frequencies are related to the eigenvalues of the Lam\'e volume integral operator defined on the domain of the inclusion. Therefore, these resonant frequencies are, in principle, computable. After injecting the small inclusion at a location point $z$, we send an elastic incident plane wave at an incident frequency close to one of the mentioned resonant frequencies, say $\omega_{n_0}$.
Contrasting the ($\mathtt{p}$-parts of the) farfields generated, at one incident direction, before and after injecting this small inclusion, we provide an explicit formula that allows us to recover the total field $V^{t,\mathtt{p}}(z,-\hat{x})$ corresponding to $\mathtt{p}$-incident waves at the location of the small inclusion $z \in \Omega$. This total field is generated in the absence of the inclusion. Then we repeat the experiment by injecting more inclusions inside $\Omega$.  Using this reconstructed field in the Lam\'e PDE system, via a numerical differentiation, we recover the values of the mass density inside $\Omega$. It is worth mentioning that, we use measurements of dimension 3 to recover a function of 3 dimensions freedom. This makes the inverse problem not over determined. In addition, we use only the pressure wave and the $\mathtt{p}$-part of the farfield, for the reconstruction. To our best knowledge, this is the first result using only one type of elastic waves for the parameter identification.
\bigskip

{\bf Keywords.} Elastic Wave Propagation, Mass Density Reconstruction, Elastic Inclusions, Elastic Resonators.\\
{\bf MSC(2020):} 35R30, 31B20, 65N21.
\end{abstract}

\section{Introduction and statement of the results}
\subsection{Introduction}
~~~~~~~~~~~~~~~~~~~~~~~

Many of the inverse problems appearing in wave imaging, in the broad sense, consist in recovering material properties of the medium, of interest, from responses to external interrogations measured away from this domain. It is well known that reconstructing such materials is highly unstable, see \cite{Isakov-book, Choulli-book} for instance. Such instability is inherent to the problem setting because the corresponding forward problem is highly smoothing. Several approaches have been proposed to handle this instability, as regularization methods \cite{Engl-all-book} or coupling different type of interrogating waves like in hybrid imaging  \cite{ Bal-Review, Bal-all-review-2, Bal-all-Review-3}. Recently, based on engineering ideas of imaging using contrast agents, \cite{Chen-Li-2015}, we proposed an approach for solving the inverse problem at the expense of injecting into the medium of interest, whenever possible, small scaled resonating contrast agents.
Such contrast agents where originally proposed to the purpose of drugs delivery, see \cite{ HAN-all, KIM-all, Mitchel-all}, then they were used also to the imaging and therapy, \cite{Chen-Li-2015, HAN-all, KIM-all, Mitchel-all}. These small scaled resonators can be found in optics, as the plasmonic and the all-dielectric nanoparticles, in acoustics as micro-scaled bubbles and in mechanics as micro or nano-scaled inclusions. Therefore, these contrast agents could be used for the purpose of imaging based on the acoustic, optic or elastic waves based imaging (and also their related combinations). 
\bigskip

Let us focus now on imaging in the time-harmonic regime. The main idea is that under some critical scales of their size and contrasts, these agents will play the role of local resonators. Namely these resonators are related to the volume integral operators or surface integral operators appearing in the related Lippmann-Schwinger system modeling the scattering phenomenon under consideration. The resonant frequencies are then generated by the eigenvalues of those operators. As such, when the coupled system, given by the background and the injected contrast, is interrogated by incident frequencies close to those mentioned resonant frequencies, the generated fields are, additively, perturbed by local spots. These local spots model the interaction between the background medium and the small contrast. 
Our strategy is then to measure the generated field before and after injecting these contrast agents. Contrasting the two fields, we recover the field modeling the local spot we mentioned above. From this local spot, we recover the total field {(or other useful quantities) that we will use to reconstruct the needed material properties}. 
\bigskip

This approach was tested in acoustic, see \cite{DGS-2021}, in time harmonic and then time domain setting, see \cite{SW-2022, SSW-2023}, and in optics and photo-acoustic, see \cite{GS-2022, GS-2022-2, GS-2022-3}, where the underlying inverse problem was fully solved. In this current work, we extend this approach to the elastic tomography problem using contrasting inclusions. 
\bigskip

In acoustics, the bubbles can be modeled by contrast on the mass density or the bulk modulus. The useful resonating contrasting agents are the 'Minnaert' soft bubble, \cite{Ammari-F-G-L-Z, Ammar-all-bubble}, which is due to the balanced, but high, contrast between the mass density and the bulk modulus. It generates a resonant frequency, i.e. the Minnaert one, throughout the surface Double layer operator. Another option is the non-soft but bulky bubbles, see \cite{Ammari-all-Volume-potential, MPS-2022-volume-potential}, with which we can generate a sequences of resonant frequencies due to the scalar Newtonian operator. In optics, the modeling of the used nanoparticles is related to the permittivity or the permeability. We distinguish a sequences of all-dielectric resonant frequencies which are due to the vector-Newtonian operator projected on the divergence free subspace, \cite{Ammari-all-2023, CGS-2023}, or a sequences of plasmonic resonant frequencies, due to the surface double layer operator, or equivalently the vector-Newtonian operator projected on the subspace of the grad harmonic fields, \cite{GS-2022-2}. The situation in elasticity is richer than in acoustics or optics, as we have two types of speeds. To give a fortaste, we recall that the linearized and isotropic elastodynamic model is related to the mass density $\rho$  and the Lam\'e parameters 
$$
\lambda =\frac{v E}{(1+v)(1-2v)} \mbox{ and } \mu=\frac{E}{(1+v)}
$$
where $E$ is the Young's modulus, which is positive, and $v$ is the Poisson's ratio. Most of the natural solids have the Poisson's ratio $v$ in $[-1, \frac{1}{2}]$, see \cite{Lim}. Therefore $\lambda$ might be negative, as the auxetic solids, see \cite{Lakes, Lim}, and both $\lambda$ and $\mu$ can be large, as the saturated Clay or Rubber material see \cite{Lim}. 

We are interested in small scaled inclusions enjoying few of the following properties.

\begin{enumerate}
\item The mass density $\rho$ is high as compared to the background's one. In addition, the Poisson ration $v$ is in one of the following ranges: \label{item-1-for-nu}
\item []
\item  $v$ is in $(-1, \frac{1}{2})$ and away from $-1$ and $\frac{1}{2}$. In this case, the inclusion enjoys slow Shear and Pressure speeds. \label{item-2-for-nu}
\item []
\item  $v$ is close to $\frac{1}{2}$.  It enjoys a slow Shear speed and a moderate Pressure speed.\label{item-3-for-nu}
\item []
\item  $v$ is close to $-1$. It enjoys a moderate Shear and Pressure speed.\label{item-4-for-nu}
\end{enumerate}

In the current work, we only focus on contrasts coming from the mass density $\rho$, i.e., case \eqref{item-1-for-nu}-\eqref{item-2-for-nu}. In addition, for simplicity of the exposition, we assume the inclusions to have the Lam\'e constants $\lambda$ and $\mu$ the same as those of the background. Inspired by previous results using ultrasound measurements with bubbles as contrast agents, see \cite{DGS-2021, SSW-2023}, we do believe that with inclusions enjoying high mass density but moderate Lam\'e parameters would allow us to reconstruct only the mass density of the background medium, as it is stated and discussed in sections \ref{The model}, \ref{section-results} and \ref{section-Application}. However, using inclusions enjoying a high mass density with $v$ close to $\frac{1}{2}$, i.e., \eqref{item-1-for-nu} and \eqref{item-3-for-nu} above, then one could reconstruct, in addition to the mass density, also the Pressure speed. Likewise, using inclusions enjoying high mass density with $v$ close to $-1$, i.e., \eqref{item-1-for-nu} and \eqref{item-4-for-nu} above, then one could reconstruct, in addition to the mass density, both the Shear and the Pressure speeds. Therefore, one could solve the whole problem for this Lam\'e model.

\subsection{The Mathematical Model}\label{The model}\hfill
\par Let $D_j\subset \mathbb{R}^3$, $j=1,2,\cdots, M$ denote small inclusions defined by  $D_j:=z_j+aB_{{j}}$, where $B_{{j}}$ is an open, simply connected, bounded set with a Lipschitz boundary containing the origin. The parameter $a(>0)$ characterises the smallness of inclusion, and $z_j$ denotes the location of the inclusion $D_j$. 
Let $\Omega$ be an open, bounded and smooth domain in $\mathbb{R}^3$ that compactly contains the small inclusions $D_m,\, m = 1, \cdots, M$. The minimum distance between the small inclusions $D_m,\, m = 1, \cdots, M$, assumed to be positive, is denoted by $d$, i.e.,. 
\begin{equation}\label{def-dis-d-mp}
d:=\min\limits_{ 1\leq i,j\leq M,\, i\neq j } d_{ij} \quad [>0],  \mbox{ where }d_{ij}:=dist(D_i,D_j).
\end{equation} 
        
 We assume that the Lam\'e parameters $\lambda$ and $\mu $ are constants in $\mathbb{R}^3 $ satisfying $\mu>0$ and $3\lambda+2\mu >0$. Consider the mass density, denoted by $\rho(x)$, of the form 
\begin{equation}\label{def-of-rho}
 \rho(x) =\left\{ \begin{array}{c c c c}
  \rho_{0}(x)=\tilde{\rho_0} & \mbox{ in} & \mathbb{R}^3\setminus\bar{\Omega}
 \\
 \rho_{0}(x) & \mbox{ in} & \Omega  \\
 \rho_j:=c_ja^{-2}&\mbox{ in} & D_j, \; j=1,2,\cdots, M
\end{array}
\right.,
\end{equation}
where $\rho_{0}(x)$ is background mass density assumed to be $C^1$-smooth inside $\Omega$ and is constant $\tilde{\rho_0}$, independent of $a$, outside the $\Omega$.  Thus $\rho_j$ denotes the mass density of the inclusion $D_j$ with the scaling $\rho_{j}=c_ja^{-2}$,  where $j=1,2,\cdots,M$, and $c_j$ is a positive constant independent of $a$.

We are interested in the following problem, describing the time-harmonic elastic scattering by {inclusions}, 
\begin{equation}\label{elastic-problem-with-bubble-variable-density}
\left\{ \begin{array}{c c c } 
\Delta^e U^t+\omega^2\rho U^t = 0 & \mbox{in} &\mathbb{R}^3 \\ 
U^t|_{+}=U^t|_{-}\quad \mbox{and} \quad   T_{\nu_j}{U^t}|_{+}=T_{\nu_j}{U^t}|_{-}& \text{on} &  \partial D_j, j=1,2,\cdots,M
\end{array}\right.,
\end{equation}
where, $\omega>0$ is a given frequency,  $\nu_j$ denotes the outward unit normal to $\partial D_j$, $\Delta^e(\star)$ is the Lam\'e operator defined by $\Delta^e(\star):=(\lambda+\mu)\nabla(\nabla\cdot \star)+\mu\Delta(\star)$ and $T_{\nu}$ denotes the  conormal derivative defined as $T_{\nu}(*): =\lambda(\nabla\cdot (*))\nu+\mu(\nabla (*)+\nabla (*)^{\top})\nu$. Here, $U^t:=U^i+U^s$ is the total field, with $U^i$ denoting the incident wave (we restrict to plane incident waves) and $U^s$ denoting the elastic field scattered by the inclusion $D$ due to the incident field $U^i$. The incident field $U^i$ satisfies the Navier equation $\Delta^e U^i+\omega^2 \tilde{\rho_0} U^i=0 \,\, \text{in}\,\, \mathbb{R}^3$. Making use of the Helmholtz decomposition (see \cite{LOW-FREQUENCY}), the scattered field $U^s$ can be decomposed as $U^s=U_{\mathtt{p}}^s+U_{\mathtt{s}}^s$, where $U_{\mathtt{p}}^s:=-\frac{\nabla(\nabla\cdot U^s)}{{\kappa}_{\mathtt{p}}^2}$ denotes the pressure (or the longitudinal) part of the scattered field, and $U_{\mathtt{s}}^s:=\frac{\nabla\times(\nabla \times U^s)}{{\kappa}_{\mathtt{s}}^2}$ denotes the shear (transversal) part of the scattered field. Further, $U_{\mathtt{p}}^s$ and $U_{\mathtt{s}}^s$ satisfy the Sommerfeld Radiation Condition (\textit{S.R.C.}) separately, which together are called Kupradze Radiation Conditions (\textit{K.R.C.}) and are given below 
\begin{eqnarray}
\lim_{|x|\rightarrow \infty}|x|\left(\frac{\partial {U}_{\mathtt{p}}^{s}(x)}{\partial |x|}-\mathbf{\mathtt{i}}{\kappa}_{\mathtt{p}}{U}_{\mathtt{p}}^{s}(x)\right)=0 \quad \mbox{and}\quad
\lim_{|x|\rightarrow \infty}|x|\left(\frac{\partial {U}_{\mathtt{s}}^{s}(x)}{\partial |x|}-\mathbf{\mathtt{i}}{\kappa}_{\mathtt{s}} {U}_{\mathtt{s}}^{s}(x)\right)=0, \label{KRC-U-s-U-p-variable-rho}
\end{eqnarray} 
where the two limits are uniform in all the directions $\hat{x}:=\frac{x}{|x|}$ on the unit sphere $\mathbb{S}^2$. Here, ${\kappa}_{\mathtt{p}}$ and ${\kappa}_{\mathtt{s}}$ are longitudinal and the transversal wave numbers, respectively, i.e., ${\kappa}_{\mathtt{p}}:=\frac{\omega}{{c}_{\mathtt{p}}}$ and ${\kappa}_{\mathtt{s}}:=\frac{\omega}{{c}_{\mathtt{s}}}$, where ${c_{\mathtt{s}}}$ and ${c_{\mathtt{p}}}$ denotes the transversal and longitudinal phase velocities respectively and is given by ${c}_{\mathtt{s}}^2:=\frac{\mu}{\tilde{\rho_0}}$ and ${c}_{\mathtt{p}}^2:=\frac{\lambda+2\mu}{\tilde{\rho_0}}$. \\

By writing $D:=\cup_{j=1}^M D_j$, the scattering problem (\ref{elastic-problem-with-bubble-variable-density}--\ref{KRC-U-s-U-p-variable-rho}) can be equivalently formulated as
 \begin{align}
 & \Delta^e U^t|_{-}+\omega^2\rho_{j} U^t|_{-} = 0 \quad \mbox{in} \quad D_j, j=1,\cdots,M,\label{internal-elastic-variable-rho}\\ 
&\Delta^e U^t|_{+}+\omega^2\rho_{0}(x) U^t|_{+} = 0 \quad \mbox{in} \quad \mathbb{R}^3\setminus \bar{D},\label{external-elastic-variable-rho} \\
&U^t|_{+}=U^t|_{-} \quad \text{and } \quad T_{\nu}{U^t}|_{+}=T_{\nu}{U^t}|_{-}\quad \text{on} \quad \partial D,\label{Transmission-SM-variable-rho}\\
 &\lim_{|x|\rightarrow \infty}|x|\left(\frac{\partial {U}_{\mathtt{p}}^{s}(x)}{\partial |x|}-\mathbf{\mathtt{i}}{\kappa}_{\mathtt{p}}{U}_{\mathtt{p}}^{s}(x)\right)=0 \quad \text{and} \quad
\lim_{|x|\rightarrow \infty}|x|\left(\frac{\partial {U}_{\mathtt{s}}^{s}(x)}{\partial |x|}-\mathbf{\mathtt{i}}{\kappa}_{\mathtt{s}} {U}_{\mathtt{s}}^{s}(x)\right)=0\label{KRC-U-s-variable-rho}.
 \end{align}
The scattering problem (\ref{internal-elastic-variable-rho}--\ref{KRC-U-s-variable-rho}) is well posed in the H\"{o}lder or Sobolev spaces, see \cite{COLTON-IE,COLTON-KRESS,kupradze1980three,kupradze1967potential} for instance.\\

 We restrict to plane incident waves $U^i$ of the form, 
$
 U^i({x},\theta):=\beta_1\theta e^{\mathbf{\mathtt{i}}\kappa_{\mathtt{p}}\theta\cdot x}+\beta_2\theta^{\perp}e^{\mathbf{\mathtt{i}}\kappa_{\mathtt{s}}\theta\cdot x}$,
where $\beta_j$, $j=1,2$ are scalars, $\theta$ is the incident direction in $\mathbb{S}^2$ and $\theta^{\perp}$ is any direction in $\mathbb{S}^2$ perpendicular to $\theta$.  We denote $\mathtt{p}$ and $\mathtt{s}$-parts of the incident waves  by $U_{\mathtt{p}}^i({x},\theta)$ and $U_{\mathtt{s}}^i({x},\theta)$ respectively, i.e., in our case $U_{\mathtt{p}}^i({x},\theta):=\beta_1\theta e^{\mathbf{\mathtt{i}}\kappa_{\mathtt{p}}\theta\cdot x}$ and $U_{\mathtt{s}}^i({x},\theta):=\beta_2\theta^{\perp}e^{\mathbf{\mathtt{i}}\kappa_{\mathtt{s}}\theta\cdot x}$. Observe that in three-dimensional setting,  any perpendicular direction $\theta^{\perp}$ is combination of two orthogonal directions, namely horizontal and vertical perpendicular directions $\theta^{\perp_h},\theta^{\perp_v}$ respectively, and so $\theta^{\perp}:=\frac{\alpha_1}{\sqrt{\alpha_1^2+\alpha_2^2}}\theta^{\perp_h}+\frac{\alpha_2}{\sqrt{\alpha_1^2+\alpha_2^2}}\theta^{\perp_v}$, where $\alpha_1,\alpha_2$ are scalars, define $\beta_{2_h}:=\beta_2 \frac{\alpha_1}{\sqrt{\alpha_1^2+\alpha_2^2}}$ and $\beta_{2_v}:=\beta_2\frac{\alpha_2}{\sqrt{\alpha_1^2+\alpha_2^2}} $. Given a $\theta$, one can compute $\theta^{\perp_h},\theta^{\perp_v}$ explicitly, for instance using the rotation matrix which transforms $\theta$ to $e_3=(0,0,1)^\top$, see \cite[(3.5, 3.6)]{DPC-MS-InvProblems-2012}. Hence, the shear waves also have two orthogonal components called horizontal and vertical shear directions denoted by $\mathtt{s}_h$ and $\mathtt{s}_v$ in the direction of $\theta^{\perp_h}$ and $\theta^{\perp_v}$ respectively.

 The scattered field $U^s$ has an asymptotic behaviour of the form \cite{alves2002far}
\begin{eqnarray}
U^s(x,\theta)=\dfrac{e^{\mathbf{\mathtt{i}}\kappa_{\mathtt{p}}|x|}}{|x|}U_{\mathtt{p}}^\infty(\hat{x},\theta)+\dfrac{e^{\mathbf{\mathtt{i}}\kappa_{\mathtt{s}}|x|}}{|x|}U_{\mathtt{s}}^\infty(\hat{x},\theta)+O\bigg(\dfrac{1}{|x|^2}\bigg),\;\; |x|\to \infty. \label{Us-assymptotic-expansion}
\end{eqnarray}
 We denote the elastic farfield  by $U^\infty(\hat{x},\theta)$, $(\hat{x},\theta)\in \mathbb{S}^2\times\mathbb{S}^2$ corresponding to the scattered field $U^s(x,\theta)$, and is defined as the sum of the $\mathtt{p}$-farfield $U_{\mathtt{p}}^\infty(\hat{x},\theta)$ and $\mathtt{s}$-farfield $U^\infty_{\mathtt{s}}(\hat{x},\theta)$, which are defined as the coefficients of $\frac{e^{\mathbf{\mathtt{i}}\kappa_{\mathtt{p}}|x|}}{|x|}$ and $\frac{e^{\mathbf{\mathtt{i}}\kappa_{\mathtt{s}}|x|}}{|x|}$ respectively in the asymptotic expansion corresponding to incident and propagation unit directions $\theta$ and $\hat{x}$. Also, we use $U_\mathtt{q}^{\infty,\mathtt{j}}(\hat{x},\theta)$, for $\mathtt{j},\mathtt{q}=\mathtt{p},\mathtt{s} $ to denote the $\mathtt{q}$ part of the farfield associated to $\mathtt{j}$ part of the incident wave. Similarly, we use $U_\mathtt{q}^{s,\mathtt{j}}({x},\theta)$ and $U_\mathtt{q}^{t,\mathtt{j}}({x},\theta)$, for $ \mathtt{j},\mathtt{q}=\mathtt{p},\mathtt{s} $ to respectively denote the $\mathtt{q}$ part of the scattered and total fields associated to $\mathtt{j}$ part of the incident wave.
\\

Let $V^t$ the total field  satisfying the following background problem, i.e., in the absence of the inclusion $D$:
{
\begin{equation}\label{background-problem-without-bubble-variable-density}
\left\{ \begin{array}{c c c }
 (\Delta^e+\omega^2\rho_0(x))V^t(x)=0, & x\mbox{ in } &  \mathbb{R}^3\\
 &&\\
\underset{|x|\to \infty}{ \lim}|x|\left(\dfrac{\partial V^s_{\mathtt{s}}}{\partial |x|}-\mathbf{\mathtt{i}}{\kappa}_{\mathtt{s}}V_{\mathtt{s}}^s\right)=0 &  &  \\
  &&\\
  \underset{|x|\to \infty}{\lim}|x|\left(\dfrac{\partial V^s_{\mathtt{p}}}{\partial |x|}-\mathbf{\mathtt{i}}{\kappa}_{\mathtt{p}}V_{\mathtt{p}}^s\right)=0&  & 
\end{array},
\right.
\end{equation}
where $V^t:=V^i+V^s$, with $V^i$, denoting the incident wave, which we considered as a plane incident wave $U^i$, and $V^s$ denoting the scattered field, and $V_{\mathtt{p}}^s$ and $V_{\mathtt{s}}^s$ denoting the corresponding $\mathtt{p}$ and $\mathtt{s}$-parts of the scattered field $V^s$ respectively. We can observe that $V_{\mathtt{p}}^s$ and $V_{\mathtt{s}}^s$ are satisfying the \textit{K.R.C.}, in which the limits are uniform in all the directions $\hat{x}:=\dfrac{x}{|x|}$ of the unit sphere $\mathbb{S}^2$. 
\medskip\\
Let us set $G^\omega(x,y)$ to be the Green's matrix of the elastic scattering problems \eqref{background-problem-without-bubble-variable-density}. Hence, it satisfies the following equation in the distribution sense
\begin{eqnarray}
\mu\Delta_x G^\omega(x,y)+(\lambda + \mu)\nabla_x(\nabla_x\cdot G^\omega(x,y))+\omega^2\rho_{0}(x) G^\omega(x,y) = -\delta (x,y){I}, \quad \mbox{in} \quad \mathbb{R}^3,
\label{Greens-function-variable-rho}
\end{eqnarray}
and it satisfies the asymptotic expansion.
 {The asymptotic expansion of $G^\omega$ is given by
\begin{eqnarray}
G^\omega(x,y)=G^{\omega,\infty}_{\mathtt{p}}(\hat{x},y)\frac{e^{\mathtt{i}\kappa_\mathtt{p}|x|}}{|x|}+G^{\omega,\infty}_{\mathtt{s}}(\hat{x},y)\frac{e^{\mathtt{i}\kappa_\mathtt{s}|x|}}{|x|}+O(|x|^{-2}),\; |x|\to\infty\label{assymptotic-expansion-Green}
\end{eqnarray}
}
where $I$ is $3\times 3$ identity matrix.
\medskip\\
Let $\Gamma^\omega(x,y)$ denote the fundamental matrix, also called as Kupradze matrix, to the Navier-equation associated with the frequency $\omega$ and the fixed background mass density $\tilde{\rho_0}$, i.e., $\Gamma^\omega(x,y)$ satisfies the following problem 
\begin{eqnarray}
\Delta^e \Gamma^\omega(x,y)+\omega^2\tilde{\rho_0}\Gamma^\omega(x,y)=-\delta(x,y)I, & \text{in } \mathbb{R}^3,\label{fundamental-homogeneous-background-massdensity-tildrho}
\end{eqnarray}
with the related Kupradze radiation conditions.
\medskip

In addition, let $\Gamma^\omega_z(x,y)$ denote the fundamental matrix to the Navier-equation associated with the frequency $\omega$ and fixed mass density $\rho_0(z)$.

The explicit form of the Kupradze matrix $\Gamma^{\omega}(x,y)=(\Gamma_{ij}^\omega(x,y))$ is given by (see \cite{AMMARI-BIOMEDICAL-IMAGING} for instance)
\begin{eqnarray}
\Gamma^\omega(x,y)=\dfrac{1}{4\pi\mu}\dfrac{e^{\mathbf{\mathtt{i}}\kappa_{\mathtt{s}}|x-y|}}{|x-y|}I+\dfrac{1}{4\pi\mu\kappa_{\mathtt{s}}^2}\nabla\nabla^{\top}\left(\dfrac{e^{\mathbf{\mathtt{i}}\kappa_{\mathtt{s}}|x-y|}}{|x-y|}-\dfrac{e^{\mathbf{\mathtt{i}}\kappa_{\mathtt{p}}|x-y|}}{|x-y|}\right),\end{eqnarray}
and it satisfies the  asymptotic expansion, see \cite{LOW-FREQUENCY} for instance,
\begin{eqnarray}
\Gamma^\omega({x},y)&=& {\dfrac{1}{4\pi(\lambda+2\mu)}\hat{x}\,\hat{x}^{\top}\dfrac{e^{\mathbf{\mathtt{i}}\kappa_{\mathtt{p}}|x|}}{|x|} e^{-\mathbf{\mathtt{i}}\kappa_{\mathtt{p}}\hat{x}\cdot y}+\dfrac{1}{4\pi\mu}(I-\hat{x}\,\hat{x}^{\top})\dfrac{e^{\mathbf{\mathtt{i}}\kappa_\mathtt{s}|x|}}{|x|}e^{-\mathbf{\mathtt{i}}\kappa_{\mathtt{s}}\hat{x}\cdot y}}+O(|x|^{-2}),\;\; |x|\to \infty.\quad\label{assymptotic-expansion-Gamma-omega}
\end{eqnarray}

We can also observe that the $ik^{th} $ entry of $\Gamma^{\omega}(x,y)$ is given by \cite{AMMARI},
\begin{equation}
\Gamma^{\omega}_{ik}(x,y)=\frac{\delta_{ik}}{4\pi \mu|x-y|}e^{\mathbf{\mathtt{i}}\kappa_{\mathtt{s}}|x-y|}+\frac{1}{4\pi\omega^2\tilde{\rho_0}}\partial_i\partial_k \frac{e^{\mathbf{\mathtt{i}}\kappa_{\mathtt{s}}|x-y|}-e^{\mathbf{\mathtt{i}}\kappa_{\mathtt{p}}|x-y|}}{|x-y|},
\end{equation}
where $\delta_{ik}$ denotes the Kronecker symbol.
\\
Hence, we can write the  series representation of  each entry $\Gamma_{ik}^\omega(x,y)$  as follows, see  \cite{LAYER-POTENTIAL-TECHNIQUES, AMMARI-MATHEMATICAL-METHODS-IN-ELASTIC-IMAGING}:
\begin{eqnarray}\label{entrywise-FM}
\Gamma^{\omega}_{ik}(x,y)&=&\frac{1}{4\pi\tilde{\rho_0}}\sum_{n=0}^{\infty}\frac{\mathbf{\mathtt{i}}^n}{(n+2)n!}\left( \frac{n+1}{c_{\mathtt{s}}^{n+2}}+\frac{1}{c_{\mathtt{p}}^{n+2}}\right)\omega^n\delta_{ik}|x-y|^{n-1}\nonumber\\
&&- \frac{1}{4\pi\tilde{\rho_0}}\sum_{n=0}^{\infty}\frac{\mathbf{\mathtt{i}}^n(n-1)}{(n+2)n!}\left(\frac{1}{c_{\mathtt{s}}^{n+2}}-\frac{1}{c_{\mathtt{p}}^{n+2}}\right)\omega^n|x-y|^{n-3}(x-y)_i(x-y)_k.
\end{eqnarray}
Further, for the zero frequency, the associated fundamental matrix $\Gamma^0$, also called as Kelvin matrix, can be simplified as below, and observe that it is symmetric, i.e., $\Gamma^0(x,y)={\Gamma^0(x,y)}^{\top}$ and $\Gamma^0(x,y)=\Gamma^0(y,x)$:
\begin{equation}\label{entrywise-FM-zerof}
\left\{
\begin{array}{c c c}
\Gamma_{ik}^0(x,y)=\dfrac{\gamma_1}{4\pi}\dfrac{\delta_{ik}}{|x-y|}+\gamma_2\dfrac{(x-y)_i(x-y)_k}{4\pi |x-y|^3},
\\
\\
\mbox{where,}\;\;\gamma_1=\frac{1}{2}\left(\frac{1}{\mu}+\frac{1}{2\mu+\lambda}\right) \;\; \mbox{and} \;\; \gamma_2=\frac{1}{2}\left(\frac{1}{\mu}-\frac{1}{2\mu+\lambda}\right).
\end{array}\right.
\end{equation}

\subsection{ {Statement  of the results}}\label{section-results}
\hfill

\par We consider incident frequencies $\omega$  satisfying}

\begin{equation}\label{omega-Minnaert}
\vert \omega^2-\omega_{n_{0_{(j)}}}^2\vert \sim a^{h_j}, \mbox{ with } 0<h_j<1  \quad\text{and }\quad
\omega_{n_{0_{(j)}}}^2:=\dfrac{1}{\rho_j\lambda_{n_0}^{D_j}},
\end{equation}
and $\frac{1}{2}\max\{\kappa_{\mathtt{s}},\kappa_{\mathtt{p}}\}\,diam(D)<1$.
Here, $(\lambda_n^{D_j},e_n^{D_j})_{n\in\mathbb{N}}$  is the complete orthonormal eigen system  of  the Navier operator $N_{D_j}^0:(L^2(D_j))^3\to (L^2(D_j))^3$ defined by $N_{D_j}^0(U)(x):=\int_{D_j} \Gamma_z^0(x,y)\cdot U(y)dy$, which is compact and self-adjoint. In addition, we set the polarization tensor
\begin{equation}\label{def-of-mat-EnB-and -omegano-First theorem}
E_{n_0}^{D_j}:= \sum_{l=1}^{l_{\lambda_{n_0}^j}}\int_{D_j}{{e}_{n_{0_l}}^{D_j}}(\xi)\,d\xi\otimes\int_{D_j}{e}_{n_{0_l}}^{D_j}(\xi)\,d\xi,
\end{equation}
where $e_{{n_0}_l}^{D_j}$, for $n_0$  {fixed}, are the eventual $l_{\lambda_{n_0}^j}$-multiple linearly independent eigenfunctions corresponding to the eigenvalue $\lambda_{n_0}^{D_j}$.
\bigskip

\noindent Observe that $\omega_{n_{0_{(j)}}}^{-2}= c_j \lambda^{B_j}_{{n_0}}$ and $E_{n_0}^{D_j}=a^3E_{n_0}^{B_j}$, because $\rho_j = c_j a^{-2}$, $\lambda^{D_j}_{n_0}=a^2 \lambda^{B_j}_{n_0}$ and $\int_{D_j}e_n^{D_j}(y)dy=a^{\frac{3}{2}}e_n^{B_j}(\xi)d\xi$, where $(\lambda_n^{B_j},e_n^{B_j})_{n\in\mathbb{N}}$  denotes the complete orthonormal eigen system  of  the Navier operator ${N}_{B_j}^0:(L^2(B_j))^3\to (L^2(B_j))^3$ defined by ${N}_{B_j}^0(U)(\zeta):=\int_{B_j} \Gamma^0(\zeta,\eta) \cdot U(\eta)d\eta$ and $E_{n_0}^{B_j}:=\sum_{l=1}^{l_{\lambda_{{n_0}_j}}}\int_{B_j}{{e}_{n_{0_l}}^{B_j}}(\eta)\,d\eta\otimes\int_{B_j}{e}_{n_{0_l}}^{B_j}(\eta)\,d\eta$. Therefore $\omega_{{n_0}_{(j)}} \sim 1$, as $0<a \ll 1$. For simplicity, we assume all $B_j$ are identical and denote them as $B$. However, the results also hold if $B_j$ are different.
\bigskip

\noindent In the following theorem, we state the approximate estimations of the scattered field and the farfield in terms of the parameter $a$.
\begin{theorem}\label{theorem}
Let $D_j:=z_j+a B$, $j=1,2,\cdots, M$, be small inclusions with Lipschitz boundary in $\mathbb{R}^3$ and $\Omega$ be a bounded domain which contains all $D_j$'s compactly. Let $\rho_0$ be a positive function of class $C^1$ in $\Omega$, equals a constant $\tilde{\rho_0}$ outside $\Omega$. The Lam\'e parameters $\lambda$ and $\mu $ are assumed to be constants throughout. In addition, let $\rho_j$ be the inclusion $D_j$'s density  and behaves as $\rho_j=c_ja^{-2}$, where $c_j>0$ is a constant independent of $a$. Then the scattering problem (\ref{internal-elastic-variable-rho}--\ref{KRC-U-s-variable-rho}) has a unique solution. In addition, under the condition 
\begin{equation}\label{condition-M}
(M-1)\frac{a}{d}<a^h, \; a\ll 1,
\end{equation}
the corresponding scattered and farfields respectively satisfies the following asymptotic expansions, uniformly for $x$ in a bounded domain in the exterior of $\Omega$ and  for all the directions  $\theta$ and $\hat{x}$ in $\mathbb{S}^2$;
\begin{enumerate}
\item The scattered field $U^s$ satisfies:
\begin{align}
U^s(x,\theta,\omega)&{=}V^s(x,\theta,\omega) +\hspace{-0.05cm}\sum_{j=1}^M\hspace{-0.05cm}\frac{\rho_j\omega^2\omega_{n_{0_{(j)}}}^2}{\omega_{n_{0_{(j)}}}^2\hspace{-0.4cm}-\omega^2} 
G^\omega(x,z_j)\hspace{-0.05cm}\cdot \hspace{-0.05cm}(E_{n_0}^{D_j}\hspace{-0.05cm}\cdot\hspace{-0.05cm} V^t(z_j,\theta,\omega)
 )
\hspace{-0.05cm}+O\left(\max\{Ma,M^2a^{1-2h}\frac{a}{d}\}\right).\label{theorem-scatteredfield-mp}
\end{align}
\item The pressure and shear farfields $U^\infty_{\mathtt{p}}$ and $U^\infty_{\mathtt{s}}$, corresponding to the scattered field $U^s$ satisfies:
\begin{align}
U^\infty_{\mathtt{p}}\hspace{-0.05cm}(\hat{x},\theta)& \hspace{-0.1cm}  = \hspace{-0.1cm} V^\infty_{\mathtt{p}}\hspace{-0.05cm}(\hat{x},\theta)\hspace{-0.09cm}+\hspace{-0.1cm}\frac{1}{4\pi(\lambda\hspace{-0.05cm}+\hspace{-0.05cm}2\mu)\beta_1}\hspace{-0.1cm}\sum_{j=1}^M\hspace{-0.1cm}\frac{\rho_j\omega^2\omega_{n_{0_{(j)}}}^2}{\omega_{n_{0_{(j)}}}^2\hspace{-0.45cm}-\hspace{-0.1cm}\omega^2}\hat{x}\hspace{-0.05cm}\cdot\hspace{-0.05cm}\left(\hspace{-0.05cm}E_{n_0}^{D_j}\hspace{-0.08cm}\cdot\hspace{-0.05cm}  V^{t,\mathtt{p}}(z_j,\theta)\hspace{-0.05cm}\right)\hspace{-0.05cm}V^{t,\mathtt{p}}(z_j,\hspace{-0.05cm}-\hat{x})
\hspace{-0.1cm}+O\hspace{-0.05cm}\left(\max\{Ma,M^2a^{1-2h}\frac{a}{d} \}\right) \label{theorem-p-part-farfield-mp}\\
\mbox{ and } 
\nonumber\\
     U^\infty_{\mathtt{s}}(\hat{x},\theta)&\hspace{-0.05cm}=\hspace{-0.05cm}V^\infty_{\mathtt{s}}(\hat{x},\theta)\hspace{-0.05cm}+\hspace{-0.05cm}\sum_{j=1}^M\hspace{-0.05cm}\frac{\rho_j}{4\pi\mu}\frac{\omega^2\omega_{n_{0_{(j)}}}^2}{\omega_{n_{0_{(j)}}}^2\hspace{-0.35cm}-\omega^2} \hspace{-0.05cm}\left[\hspace{-0.05cm}\frac{(E_{n_0}^{D_j}\hspace{-0.05cm}\cdot\hspace{-0.05cm} V^{t}(z_j,\theta))\hspace{-0.05cm}\cdot \hat{x}^{\perp_h}}{\beta_{2_h}} V^{t,\mathtt{s}_h}(z_j,-\hat{x})\hspace{-0.05cm}+\hspace{-0.05cm}\frac{(E_{n_0}^{D_j}\cdot V^{t}(z_j,\theta))\cdot \hat{x}^{\perp_v}}{\beta_{2_v}} V^{t,\mathtt{s}_v}(z_j,-\hat{x})\right] \nonumber\\
     &\qquad
     +O\left(\max\{Ma,M^2a^{1-2h}\frac{a}{d} \}\right).\label{theorem-farfield-s-part-mp}
\end{align}
\end{enumerate}
In particular, for the incident direction $\theta=-\hat{x}$, considering the $\mathtt{p}$-incident wave $U_{\mathtt{p}}^i$, \eqref{theorem-p-part-farfield-mp} simplifies to

\begin{align}
U^{\infty,\mathtt{p}}_{\mathtt{p}}(\hat{x},-\hat{x})  =
&V^{\infty,\mathtt{p}}_{\mathtt{p}}(\hat{x},-\hat{x}) + \hspace{-0.05cm}\sum_{j=1}^M\hspace{-0.05cm}\frac{\rho_j}{4\pi(\lambda+2\mu)\beta_1}\frac{\omega^2\omega_{n_{0_{(j)}}}^2}{(\omega_{n_{0_{(j)}}}^2-\omega^2)}{\left( \hat{x}\cdot  \left(E_{n_0}^{D_j} \cdot V^{t,\mathtt{p}}(z_j,-\hat{x})\right) \right)}V^{t,\mathtt{p}}(z_j,-\hat{x})
\nonumber\\
&\quad +O\left(\max\{Ma,M^2a^{1-2h}\frac{a}{d} \}\right).\label{farfield-variable-density-backscattered-field}
\end{align}
Similarly, for $\mathtt{s}_h$ and $\mathtt{s}_v$ incident waves $U^{i}_{\mathtt{s_h}}$ and  $U^{i}_{\mathtt{s}_v}$,  \eqref{theorem-farfield-s-part-mp} simplifies to the following respectively:
\begin{align}
U^{\infty,\mathtt{s}_h}_{\mathtt{s}}(\hat{x},-\hat{x})=&V^{\infty,\mathtt{s}_h}_{\mathtt{s}}(\hat{x},-\hat{x})+\hspace{-0.05cm}\sum_{j=1}^M\hspace{-0.05cm}\frac{\rho_j}{4\pi\mu\beta_2}\frac{\omega^2\omega_{n_{0_{(j)}}}^2}{\omega_{n_{0_{(j)}}}^2\hspace{-0.35cm}-\omega^2} \hspace{-0.05cm}\bigg[(E_{n_0}^{D_j}\cdot V^{t,\mathtt{s}_h}(z_j,-\hat{x}))\cdot \hat{x}^{\perp_h}V^{t,\mathtt{s}_h}(z_j,-\hat{x})
     \nonumber\\
     &+(E_{n_0}^{D_j}\cdot V^{t,\mathtt{s}_h}(z_j,-\hat{x}))\cdot \hat{x}^{\perp_v} V^{t,\mathtt{s}_v}(z_j,-\hat{x})\bigg]+O\left(\max\{Ma,M^2a^{1-2h}\frac{a}{d} \}\right),\label{theorem-s-h-incident-farfield-backscat}
     \\
U^{\infty,\mathtt{s}_v}_{\mathtt{s}}(\hat{x},-\hat{x}) = &V^{\infty,\mathtt{s}_v}_{\mathtt{s}}(\hat{x},-\hat{x}) + \hspace{-0.05cm}\sum_{j=1}^M\hspace{-0.05cm}\frac{\rho_j}{4\pi\mu\beta_2}\frac{\omega^2\omega_{n_{0_{(j)}}}^2}{\omega_{n_{0_{(j)}}}^2\hspace{-0.35cm}-\omega^2} \hspace{-0.05cm}\bigg[(E_{n_0}^{D_j} \cdot  V^{t,\mathtt{s}_v}(z_j,-\hat{x})) \cdot  \hat{x}^{\perp_h}V^{t,\mathtt{s}_h}(z_j,-\hat{x}) 
     \nonumber\\
     &+ (E_{n_0}^{D_j} \cdot  V^{t,\mathtt{s}_v}(z_j,-\hat{x})) \cdot  \hat{x}^{\perp_v} V^{t,\mathtt{s}_v}(z_j,-\hat{x})\bigg] + O\left(\max\{Ma,M^2a^{1-2h}\frac{a}{d} \}\right).\label{theorem-s-v-incident-farfield-backscat}
\end{align}
In the above expansions, { $V^{t,\mathtt{p}}(x,\theta)$ is the solution to the problem \eqref{background-problem-without-bubble-variable-density}, representing the total field corresponding to the $\mathtt{p}$-incident wave, and $V^{\infty,\mathtt{p}}(\hat{x},\theta,\omega)$ represents its farfield for the incident frequency $\omega$}. 
\end{theorem}

\subsection{An application: Reconstruction of the mass density $\rho_0$ using highly dense injected inclusions}\label{section-Application}\hfill
\par As an application of Theorem \ref{theorem}, we propose to solve the inverse problem of reconstructing the background mass density using the following measurements. 
\par 
{\bf{Measurements.}}  We assume to have access to  the $\mathtt{p}$-part of the farfield data, corresponding to $\mathtt{p}$-incident waves before and after injecting an inclusion at a point $z\in\Omega$, corresponding to a single incident direction $\theta\in\mathbb{S}^2$ and the corresponding back-scattering direction $\hat{x}=-\theta$ at a frequency $\omega$  satisfying $|\omega^2-\omega_{n_0}^2|\sim a^h$, i.e., measure $V_p^{\infty,p}(\hat{x},-\hat{x},\omega)$ and
$U_p^{\infty,p}(\hat{x},-\hat{x},\omega,z) $ respectively for single incident direction $\theta$ and few observation directions $\hat{x}$. We repeat these measurements by injecting the inclusions $D_j$, one after another, to different locations $z \in \Omega$. Precisely, we inject the first inclusion, perform the measurement, then inject the second inclusion, and perform the similar measurements,  continuing this process until the entire domain $\Omega$ is scanned. Our main assumption is that these inclusions are well separated. Precisely,  let the minimum distance $d$ between the injected inclusions satisfies
 $
 (M-1)\frac{a}{d}<a^h, \, a \ll 1.
$ 
\newline
\noindent The last condition, on the number and separation of the inclusions, allows us to derive expansions of the fields in the Born-regime, avoiding multiple interaction between the inclusions. This condition is important in the deriving the algorithm below.
\bigskip

\noindent Before describing the reconstruction scheme, let us first discuss the dimensionality of these measurements. In terms of dimensionality, we observe that in terms of measurements, we have only $3$ degrees of freedom ($3D$) equal to the $3D$ degrees of freedom for the unknown mass density $\rho(\cdot)$. This makes the related inverse problem equally determined (i.e. not overdetermined).  In addition, we use only the $\mathtt{p}$-parts of the farfield measurements, i.e. only the Pressure waves.
\medskip

{\bf{Reconstruction scheme.}} Our reconstruction procedureis divided into a few steps. 
 {
\begin{enumerate}
\item[\textbf{Step 1}]  \label{Vt-dot-int-e_n0-D-using-farfield-in-steps}Make use of the backscattered  farfield data  $V^{\infty,\mathtt{p}}_{\mathtt{p}}(\hat{x},-\hat{x},\omega)$ and $U^{\infty,\mathtt{p}}_{\mathtt{p}}(\hat{x}, -\hat{x},\omega)$ in the farfield expansion \eqref{farfield-variable-density-backscattered-field} {for $M=1$, i.e., by sending single inclusion} . Multiply \eqref{farfield-variable-density-backscattered-field} from left side by matrix $E_{n_0}^{B}$  then by unit vector $\hat{x}$ to recover  {$\hat{x}\cdot  \left(E_{n_0}^{B} \cdot V^{t,\mathtt{p}}(z_1,-\hat{x})\right)$, for $z_1\in\Omega$ upto a sign}.
\\
\item [\textbf{Step 2}]
Make use of   {$\hat{x}\cdot  \left(E_{n_0}^B \cdot V^{t,\mathtt{p}}(z_1,-\hat{x})\right)$, for $z_1\in\Omega$} in \eqref{farfield-variable-density-backscattered-field} to recover  {$V^{t,\mathtt{p}}(z_1)$ upto a sign}. 
\\
\item [\textbf{Step 3}]  {Measure the backscattered farfields by sending the second inclusion and make use of \eqref{farfield-variable-density-backscattered-field} for $M=2$. From Step 1, we have $ \left(E_{n_0}^{B} \cdot V^{t,\mathtt{p}}(z_1,-\hat{x})\right)$, for $z_1\in\Omega$ upto a sign. By substituting known terms in \eqref{farfield-variable-density-backscattered-field}, we recover  {$ \left(E_{n_0}^{B} \cdot V^{t,\mathtt{p}}(z_2,-\hat{x})\right)$, for $z_2\in\Omega$ upto a sign}.  Hence we can recover {$V^{t,\mathtt{p}}(z_2)$ upto a sign}, following Step 2.
\\
\item [\textbf{Step 4}] Similarly, measure the backscattered farfield measurements by sending the third inclusion into the target region.  Using these farfield measurements, $\hat{x}\cdot  \left(E_{n_0}^{B} \cdot V^{t,\mathtt{p}}(z_1,-\hat{x})\right)$ from Step 1, and $\hat{x}\cdot  \left(E_{n_0}^{B} \cdot V^{t,\mathtt{p}}(z_2,-\hat{x})\right)$ from Step 3, in the expansion \eqref{farfield-variable-density-backscattered-field} for $j=3$, recover the value {$ \hat{x}\cdot  \left(E_{n_0}^{B} \cdot V^{t,\mathtt{p}}(z_3,-\hat{x})\right)$ upto a sign}. Further, following Step 2, recover the total field $V^t$ at the location of $D_3$, i.e., $V^t(z_3)$.
\\
\item [\textbf{Step 5}] Similarly, by sending one by one inclusion, recover {$\hat{x}\cdot  \left(E_{n_0}^{B} \cdot V^{t,\mathtt{p}}(z_j,-\hat{x})\right)$ upto a sign and hence $V^t(z_j,-\hat{x})$, $j=1,2,\cdots, M$}.
\\
\item [\textbf{Step 6}] Reconstruct $\rho_0(z_j)$ using the equation \eqref{background-problem-without-bubble-variable-density} for $\mathtt{p}$-incident wave, i.e.,
\begin{eqnarray}
 {\Delta^e V^{t,\mathtt{p}}(z_j)+\omega^2\rho_0(z)V^{t,\mathtt{p}}(z_j)=0.}
\end{eqnarray}
 This reconstruction is possible with the knowledge of  $\Delta^e V^t(z_j)$ for $z_j\in\Omega$, which can be done by applying an effective numerical differentiation technique. Hence, the mass density $\rho_0$ at the location $z_j$ can be recovered
 {as,
 \begin{eqnarray}
\rho_0(z_j)= -\dfrac{(\Delta^e V^{t,\mathtt{p}}(z_j))\cdot \overline{V^{t,\mathtt{p}}}(z_j)
}{\omega^2|V^{t,\mathtt{p}}(z_j)|^2},\quad j=1,2,3,\cdots, M.\label{theorem-rho-0-expression-at-z}
\end{eqnarray}}}
\end{enumerate}}
\bigskip
\bigskip
Let us discuss a little more the proposed scheme. 
\begin{enumerate}
\item In terms of dimensionality,  we have only $3$ degrees of freedom of our measurements ($3D$) which is equal to the $3D$ degrees of freedom for the unknown mass density $\rho(\cdot)$. This makes the related inverse problem equally determined (i.e. not overdetermined). 
 
\item As described above, the amount of data needed for the reconstruction is reasonably optimal. One more appreciable advantage of this approach is its \textit{locality}. Indeed, we need to inject the inclusions only in the vicinity of the domain where we want to do the imaging.
 
\item We need to scan the domain $\Omega$ with the needed small inclusion. This issue is discussed largely in the context of acoustic and optic imaging using contrast agents (as bubbles and nanoparticles). There is a huge literature devoted to the transport of such contrast agents in the framework of drugs delivery and imaging, see for instance \cite{C-F-Q2009, F-M-S2003, Chen-Li-2015, Q2007, Z-L-L-S-W19} and the references therein. The second point is related to the numerical differentiation needed to be performed on the reconstructed fields. But, as it is known, the instability generated by this step is a mild one (of H\"{o}lder type). It is well known that the reconstruction methods proposed by using the traditional measurements (as the Dirichlet-Neumann map or the farfield map) suffer from an extremely sever instability. Indeed, the inverse problems related to these types of measurements are at least of log-type (and even of double log (or loglog)-type) for the elasticity. Therefore, with our types of measurements, we reduce the Log-type stability to a H\"{o}lder-type stability. This is an appreciable step that makes our approach competitive as soon as we can have access to the formentioned measurements.
\item The proposed schemes, based on the derived asymptotic expansions, are stable against the perturbation of the inclusion positions and the noise level as soon as the perturbations and the noise level are small enough. This issue is already clarified in another context, see \cite{alsaedi2016extraction}. A similar analysis can be extended to our present context without essential difficulties apart from the needed technicalities.
\item Finally, observe that our reconstruction schemes is based only on the $\mathtt{p}$-parts of the farfields. Reconstructing {\it{shapes}} with the $\mathtt{p}$-parts of the farfields corresponding to all the p-incident waves is known already in the literature, see \cite{Drosso-Mourad, Kar-Mourad}. However, regarding the {\it{medium}} recovery (as the mass density), it is known that using those types of measurements, one can even not prove the unique identification (based on the complex geometric optics solutions for instance). Here, using our type of measurements, it turns out to be possible to reconstruct the density (and not only to show its uniqueness).  
\end{enumerate}
\bigskip

The rest of the paper is organised as follows.  The detailed justification of the results stated in Section \ref{section-results} is given in Section \ref{section-proofs-forward-problem-j-1} for a single inclusion and then extended in Section \ref{section-proofs-forward-problem-j-M} to multiple inclusions. The justification of the scheme for reconstructing the mass density is provided in Section \ref{section-Reconstruction-mass-density}. Then, several important properties of the Navier operator and apriori estimates of the total fields, which are used in proving our results, are derived in Section \ref{Technical-lemmas}. Finally, we discuss the eventuality of reconstructing the mass density using $\mathtt{s}$-parts (instead of $\mathtt{p}$-parts) of the fields and recall few estimations of singularities of the Green's matrix in Appendix \ref{section-appendix}.

\section{  Proof of Theorem \ref{theorem}\label{section-proofs-forward-problem-j-1} for $M=1$}
The dominant elastic scattered field and farfield stated in Theorem \ref{theorem} is derived by making use of the Lippmann Schwinger equation and apriori estimates associated with the total field of the problem (\ref{internal-elastic-variable-rho}--\ref{KRC-U-s-variable-rho}). 

The Lippmann Schwinger equation associated to the problem (\ref{internal-elastic-variable-rho}--\ref{KRC-U-s-variable-rho}) is

{\begin{equation}\label{lippmann-in-R3-variable-rho-lemma}
U^t(x)=V^t(x)+\int_{D_1}{\alpha(y)\omega^2G^\omega(x,y)\cdot U^t(y)\, dy},\quad x\in\mathbb{R}^3,
\end{equation}}
where $\alpha$ is the real valued function defined on $D_1$ as
{\begin{equation}\label{def-alpha}
{\alpha(y):=\rho_1-\rho_0(y).}
\end{equation}}

\subsection{Apriori estimates}\hfill\\
In this section, we state and prove apriori estimates satisfied by the total field $U^t$. 
First, let us introduce the operators $T^\omega_{D_1}:(L^2(D_1))^3\to (L^2(D_1))^3$ and the elastic Navier operator $N_{D_1}^0:(L^2(D_1))^3\to (L^2(D_1))^3$, which are going to play a pivotal role in deriving the apriori estimates:
\begin{align} \label{def-T-omega-T-z-0}
T^\omega_{D_1} U^t(x):=\int_{D_1}{\alpha(y)G^\omega(x,y)\cdot U^t(y)\,dy}, \,
\end{align}
\begin{align} \label{def-Newton-operator}
N_{D_1}^0 U^t(x)&:=\int_{D_1}{\Gamma^0(x,y)\cdot U^t(y)\,dy}.
\end{align}
The Navier operator, $N_{D_1}^0$, is compact and self-adjoint on the separable Hilbert space $(L^2(D_1))^3$ and hence it has a complete orthonormal eigensystem that we denote $\{(\lambda_n^{D_1}, e_n^{D_1})\}_{n\in\mathbb{N}}$. Fix a natural number $n_0$ and let $\omega^2$ be the frequency  such that
\begin{equation}\label{w-wn0-choosen}
 \omega^2-\omega_{{n_0}_{(1)}}^2\sim a^{h_1} \;\mbox{ where }\; \omega_{{n_0}_{(1)}}^2:=\dfrac{1}{\rho_{1}\lambda_{n_0}^{D_1}} \mbox { and }0< h_1\leq 1.
 \end{equation}
This condition is equivalent to $
\omega^2:=\dfrac{1\pm b a^{h_1}}{\rho_{1}\lambda_{n_0}^{D_1}},$
with some constant $b$. In addition, due to the scales of $\rho_1$ and $\rho_0$, we have the property
$
|(1-(\rho_{1}-\rho_{0}(z_1))\omega^2\lambda_{n_0}^{D_1})|\sim a^{h_1},\, h_1>0$, that we will be also using in the sequel.

\begin{lemma}\label{lemma-properties}We have the following properties:
 \begin{enumerate}
 \item \label{lemma-enD-scaling-lambda-scaling-to-B} 
The  eigensystem  $\{(\lambda_n^{D_1}, e_n^{D_1})\}$ satisfies the following scaling properties;
 \begin{eqnarray}\label{scaling-prop-c-o-s}
\int_{D_1}{e_n^{D_1}(x)\,dx}={a}^{\frac{3}{2}}\int_{B}{e_n^{B}(\eta)\,d\eta} \quad\mbox{ and }\quad \lambda_n^{D_1}={a}^2\lambda_n^{B}.
 \end{eqnarray}
 \item \label{infimum-exist-sigma-greater-than-0}
Under the condition \eqref{w-wn0-choosen}, we have
\begin{itemize}
\item [(i)]\label{def-sigma}
$\sigma_1 :=\underset{n(\neq n_0)}{\inf} \{|1-\alpha(z_1)\omega^2\lambda_n^{D_1}|^2\}  >0,$
\item [(ii)] the operator $(I-\alpha(z_1)\omega^2N_{D_1}^0):(L^2(D_1))^3\to (L^2(D_1))^3$ is invertible. 
\end{itemize}
 \end{enumerate}
\end{lemma}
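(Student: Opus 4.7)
My plan is to exploit the scaling from $D = z + aB$ to the reference inclusion $B$ and then argue spectrally on $B$. For the scaling properties in (1), I would start from the eigenequation $N_D^0 e_n^D = \lambda_n^D e_n^D$ and change variables $x = z + a\xi$, $y = z + a\eta$. Homogeneity of degree $-1$ of the Kelvin matrix in \eqref{entrywise-FM-zerof} gives $\Gamma_z^0(z+a\xi,\,z+a\eta) = a^{-1}\Gamma_z^0(\xi,\eta)$, and together with the Jacobian $a^3$ this transforms the equation into an eigenequation for $\tilde e_n(\xi) := e_n^D(z+a\xi)$ under $N_B^0$ with eigenvalue $\lambda_n^D/a^2$; hence $\lambda_n^D = a^2\lambda_n^B$. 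The $L^2$-normalisation $\|\tilde e_n\|_{L^2(B)}^2 = a^{-3}\|e_n^D\|_{L^2(D)}^2 = a^{-3}$ forces $e_n^B = a^{3/2}\tilde e_n$ (up to unitary rotations within each eigenspace), and integrating over $B$ yields the first identity.

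For (2)(i), combining (1) with $\rho_1 = c_1 a^{-2}$, the detuning $\omega^2 = \omega_{n_0}^2(1 + O(a^h))$, and the relation $\omega_{n_0}^{-2} = c_1 \lambda_{n_0}^B$ noted right after Theorem \ref{theorem}, a short computation gives
\begin{equation*}
(\rho_1 - \rho_0(z))\,\omega^2\,\lambda_n^D \;=\; \frac{\lambda_n^B}{\lambda_{n_0}^B} + O(a^h),
\end{equation*}
uniformly in $n$ (the remainder is uniform because $\lambda_n^B \to 0$ keeps $c_1\lambda_n^B$ bounded). Since $N_B^0$ is compact and self-adjoint, $\{\lambda_n^B\}$ is discrete and accumulates only at $0$, so there is a strictly positive spectral gap between $\lambda_{n_0}^B$ and the rest of the distinct $B$-eigenvalues. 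For $n$ whose $B$-eigenvalue differs from $\lambda_{n_0}^B$, this gap survives the $O(a^h)$ perturbation for $a$ small, yielding $\sigma > 0$.

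For (2)(ii), compactness of $N_D^0$ on $(L^2(D))^3$ (from the weakly singular Kelvin kernel on a bounded Lipschitz domain) makes $I - (\rho_1 - \rho_0(z))\omega^2 N_D^0$ a compact perturbation of the identity, hence Fredholm of index $0$, so injectivity suffices. Expanding any element of its kernel in the orthonormal basis $\{e_n^D\}$ forces each Fourier coefficient $c_n$ to satisfy $c_n\bigl[1 - (\rho_1 - \rho_0(z))\omega^2\lambda_n^D\bigr] = 0$; the bracket is nonzero for $n \neq n_0$ by (i) and for $n = n_0$ by \eqref{w-wn0-choosen}, which enforces $|1 - (\rho_1 - \rho_0(z))\omega^2\lambda_{n_0}^D| \sim a^h$. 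Hence every $c_n$ vanishes.

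The main subtlety I expect is the uniformity in (2)(i): one must check that the spectral-gap constant does not degrade as $a \to 0$ (it does not, because the gap is intrinsic to the fixed reference domain $B$), and the shorthand ``$n \neq n_0$'' has to be interpreted as selecting indices whose $B$-eigenvalue differs from $\lambda_{n_0}^B$---otherwise indices within a degenerate $\lambda_{n_0}^B$-eigenspace would drive the infimum to zero, in line with the introduction's remark that this eigenvalue need not be simple.
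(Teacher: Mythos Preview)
Your proposal is correct and follows essentially the same route as the paper: the change of variables $x=z+a\xi$, $y=z+a\eta$ together with the degree $-1$ homogeneity of the Kelvin kernel gives the scaling in (1), and for (2) both you and the paper reduce the question to the uniform lower bound $|1-(\rho_1-\rho_0(z))\omega^2\lambda_n^D|\gtrsim |\lambda_n^B-\lambda_{n_0}^B|-O(a^h)$, using the fixed spectral gap on $B$ and the detuning condition \eqref{w-wn0-choosen}. Your explicit remark that ``$n\neq n_0$'' must be read as $\lambda_n^B\neq\lambda_{n_0}^B$ (to accommodate possible degeneracy) is a useful clarification that the paper leaves implicit.
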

See section \ref{Technical-lemmas} for a detailed justification, and one can show that these properties hold over $D_j$, $j=2,3,\cdots, M$.
\medskip
\\
Using the properties mentioned in the above lemma, we derive the following proposition stating the needed estimates satisfied by the total field $U^t$.
\begin{proposition}\label{lemma-apriori-estimates}
The total field $U^t$,  (solution of (\ref{internal-elastic-variable-rho}-\ref{KRC-U-s-variable-rho})),  enjoys the following estimates:
\begin{eqnarray}\setcounter{mysubequations}{0}
     \hspace{-7.55cm}\mysubnumber& \norm{U^t}_{(L^2(D_1))^3}^2\lesssim { \dfrac{1}{|1-\alpha(z_1)\omega^2\lambda_{n_0}^{D_1}|^2} \norm{V^t}_{(L^2(D_1))^3}^2, \label{norm-ut-variiable-rho-lemma}}
      \\
   \hspace{-7.55cm}   &\nonumber
     \\
     \hspace{-7.55cm} \mysubnumber& \int_{D_1}{U^t(x)\,dx}= \int_{D_1}{W \,dx} \cdot V^t(z_1)+O(a^{4-2h}),\qquad\qquad\label{final-iny-Ut-app-variable-rho-lemma}
\end{eqnarray}
where $W:=[W_1\, W_2\, W_3]^{\top}$ and each $W_k$, $k=1,2,3$ is defined as
 \begin{eqnarray}\label{def-Wj}
  W_k:=(I-\alpha(z_1)\omega^2N_{D_1}^0)^{-1}\mathtt{e_k},
  \end{eqnarray} 
  with $\mathtt{e_k}$ denoting the standard unit vector in $\mathbb{R}^3$, for $k=1,2,3$. 
\end{proposition}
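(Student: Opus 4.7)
My plan is to exploit the spectral decomposition of the Newtonian operator $N_D^0$ and analyze the Lippmann--Schwinger equation \eqref{lippmann-in-R3-variable-rho-lemma} projected onto its eigenbasis, with the resonant mode $n_0$ isolated.

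\textbf{Setup.} Restricting \eqref{lippmann-in-R3-variable-rho-lemma} to $x\in D$, I would split $\alpha(y)=(\rho_1-\rho_0(z))+(\rho_0(z)-\rho_0(y))$ and $G^\omega(x,y)=\Gamma^0_z(x,y)+R^\omega_z(x,y)$, where $R^\omega_z$ is the remainder containing the dependence on $\omega$ and on the variable part of $\rho_0$; on $D\times D$ this remainder kernel is bounded (only the $|x-y|^{-1}$ singularity lies inside $\Gamma^0_z$). This rewrites the equation as
\begin{equation*}
(I-(\rho_1-\rho_0(z))\omega^2 N_D^0)\,U^t \;=\; V^t + \mathcal{E}(U^t),
\end{equation*}
where $\mathcal{E}(U^t)$ collects the error terms, each carrying an extra factor of $a$ coming either from the $C^1$-regularity of $\rho_0$ (so $|\rho_0(z)-\rho_0(y)|\lesssim a$ on $D$) or from the smoother piece $R^\omega_z$ together with the smallness of $|D|$.

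\textbf{Part (a).} I would expand $U^t=\sum_n c_n e_n^D$ and test the displayed equation against $e_n^D$. For $n\neq n_0$, the factor $|1-(\rho_1-\rho_0(z))\omega^2\lambda_n^D|^2\ge \sigma>0$ by Lemma \ref{lemma-properties}\eqref{infimum-exist-sigma-greater-than-0}, so the non-resonant modes contribute at most $\sigma^{-1}\|V^t\|_{(L^2(D))^3}^2$. The resonant mode $n_0$ divides by the small quantity $|1-(\rho_1-\rho_0(z))\omega^2\lambda_{n_0}^D|\sim a^h$, producing the claimed dominant factor. The perturbation $\mathcal{E}(U^t)$ is absorbed by a Neumann / fixed-point argument once $a$ is small enough, since $\mathcal{E}$ is a strict contraction relative to the leading operator.

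\textbf{Part (b).} I would Taylor-expand the smooth function $V^t$ around $z$ as $V^t(y)=V^t(z)+O(|y-z|)$ on $D$, so $\|V^t-V^t(z)\|_{(L^2(D))^3}\lesssim a\cdot a^{3/2}=a^{5/2}$. Inverting the leading operator and using the definition $W_j=(I-(\rho_1-\rho_0(z))\omega^2 N_D^0)^{-1}\mathtt{e}_j$, one gets $U^t=W\cdot V^t(z)+\text{error}$, with the error in $(L^2(D))^3$ controlled by $a^{-h}\cdot\bigl(a^{5/2}+\|\mathcal{E}(U^t)\|_{(L^2(D))^3}\bigr)$. Using part (a) inside $\mathcal{E}(U^t)$ costs one extra $a^{-h}$, so the $L^2$ error is $\lesssim a^{5/2-2h}$. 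Integrating over $D$ (multiplying by $|D|^{1/2}=a^{3/2}$) yields the claimed $O(a^{4-2h})$.

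\textbf{Main obstacle.} The delicate point is the bookkeeping of the powers of $a$ in $\mathcal{E}(U^t)$: although each term of $\mathcal{E}$ gains a factor $a$ from smoothness, it acts on $U^t$ whose $L^2$-norm may blow up like $a^{-h}$ in the resonant regime. Iterating the fixed-point once (substituting the bound from part (a) back into $\mathcal{E}$) is what converts the naive $a^{5/2-h}$ error into the sharper $a^{4-2h}$ integrated estimate, and this iteration has to be performed with care to make sure no term is of larger order than the displayed remainder.
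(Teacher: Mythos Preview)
Your proposal is correct and follows essentially the same strategy as the paper: rewrite the Lippmann--Schwinger equation as $(I-(\rho_1-\rho_0(z))\omega^2 N_D^0)U^t=V^t+\mathcal{E}(U^t)$, use the eigen-decomposition with the resonant mode $n_0$ isolated (and $\sigma>0$ controlling the rest) for part~(a), and Taylor-expand $V^t$ around $z$ for part~(b), with the $O(a)$ gain of $\mathcal{E}$ absorbed against the $a^{-h}$ blow-up. The only notable variation is in part~(b): the paper does \emph{not} invert the operator and then integrate by Cauchy--Schwarz; instead it pairs the equation with $W$ and integrates directly, using the self-adjointness of $N_D^0$ to pass $(I-(\rho_1-\rho_0(z))\omega^2 N_D^0)$ to the other side, so that $\int_D U^t$ appears on the left and the error terms $I_1,I_2$ are bounded via $\|W_i\|_{(L^2(D))^3}=O(a^{3/2-h})$. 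Your direct-inversion route and the paper's duality route are equivalent here and yield the same $O(a^{4-2h})$ remainder; the paper's version is slightly more transparent because it never needs the full operator norm of $(I-\cdots)^{-1}$, only $\|W_i\|$.
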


\begin{proof}
\setcounter{mysubequations}{0}
      \mysubnumber
\textbf{ Estimation of $\norm{U^t}_{(L^2(D_1))^3}$}

Using the operators $T^\omega_{D_1}$ and $N_{D_1}^0$, the Lippmann Schwinger equation \eqref{lippmann-in-R3-variable-rho-lemma}, associated with the problem (\ref{internal-elastic-variable-rho}--\ref{KRC-U-s-variable-rho}), can be expressed as
\begin{eqnarray}
(I-\omega^2\alpha(z_1)N^0_{D_1})U^t(x)=V^t(x)+\omega^2(T^\omega_{D_1}-\alpha(z_1)N_{D_1}^0)U^t(x).\label{operator-form-for-variable-rho}
\end{eqnarray}
Since $(\lambda_{n}^{D_1}, e_n^{D_1})_{n\in\mathbb{N}}$ is a complete orthonormal eigensystem of the Navier operator $N_{D_1}^0$, making use of Parseval's identity and the self-adjoint property of $N_{D_1}^0$, we can compute the $L^2$ norm of the left hand side of \eqref{operator-form-for-variable-rho} as\footnote{Here $\langle\, ,\,\rangle$ stands for the $(L^2(D_1))^3$ inner product.}
\begin{align}
\norm{(I- & \omega^2\alpha(z_1)N_{D_1}^0)U^t}^2_{(L^2(D_1))^3}
\,=\,\sum_{n}|\,\langle\,(I-\omega^2\alpha(z_1)N_{D_1}^0)U^t,\,e_n^{D_1}\,\rangle\,|^2
\nonumber\\
&=\,\sum_{n}|\,\langle\,U^t,\,(I-\omega^2\alpha(z_1)N^0_{D_1})e_n^{D_1}\,\rangle\,|^2
\,=\,\sum_{n}{|\,(1-\alpha(z_1)\omega^2\lambda_n^{D_1})\,|^2\;|\,\langle\,U^t,\,e_n^{D_1}\,\rangle\,|^2}
\nonumber\\
&
=\,|\,(1-\alpha(z_1)\omega^2\lambda_{n_0}^{D_1})\,|^2\; {\sum_{l=1}^{l_{\lambda_{n_0}}}|\,\langle\,U^t,\,e_{n_{0_l}}^{D_1}\,\rangle\,|^2 } 
+\sum_{n\neq n_0}{|\,(1-\alpha(z_1)\omega^2\lambda_n^{D_1})\,|^2\;|\,\langle\,U^t,\,e_n^{D_1}\,\rangle\,|^2}.\label{without-sigma-variable-rho}
 \end{align}

Making use of the positivity of 
$\sigma_1 \,(:=\inf_{n\neq n_0}|1-\alpha(z_1)\omega^2\lambda_n^{D_1}|^2)$, from Lemma \ref{lemma-properties}, in \eqref{without-sigma-variable-rho} and by making use of Parseval's identity, we deduce that
\begin{align}
\norm{U^t}^2_{(L^2(D_1))^3}&= {\sum_{l=1}^{l_{\lambda_{n_0}}}|\,\langle\,U^t,\,e_{n_{0_l}}^{D_1}\,\rangle\,|^2}+\sum_{n\neq n_0}|\,\langle U^t,\,e_n^{D_1}\rangle\,|^2
\nonumber\\
&\leq\left[ \dfrac{1}{|1-\alpha(z_1)\omega^2\lambda_{n_0}^{D_1}|^2}+\dfrac{1}{\sigma_1}\right] \norm{U^t-\alpha(z_1)\omega^2N_{D_1}^0(U^t)}_{(L^2(D_1))^3}^2.\label{norm-ut-parseval} 
\end{align}

Hence, to estimate $\|U^t\|_{(L^2(D_1))^3}$ the estimate of $\norm{U^t-\alpha(z_1)\omega^2N_{D_1}^0(U^t)}_{(L^2(D_1))^3}$ is needed. We have from \eqref{operator-form-for-variable-rho} that
\begin{eqnarray}
\norm{U^t-\alpha(z_1)\omega^2N_{D_1}^0(U^t)}_{(L^2(D_1))^3}&=&\norm{V^t+\omega^2(T^\omega_{D_1}-\alpha(z_1)N_{D_1}^0)(U^t)}_{(L^2(D_1))^3}\nonumber
\\
&\leq & \norm{V^t}_{(L^2(D_1))^3}+|\omega^2|\,\,\norm{(T^\omega_{D_1}-\alpha(z_1)N^0_{D_1})U^t}_{(L^2(D_1))^3}.\label{term_in_norm_u-variable-rho}
\end{eqnarray}

Consider, 
\begin{align}
\norm{(T^\omega_{D_1}-\alpha(z_1)N^0_{D_1})U^t&}_{(L^2(D_1))^3}^2\underset{\underset{\eqref{def-Newton-operator}}{\eqref{def-T-omega-T-z-0}}}{=}
 \norm{\int_{D_1}
 {\bigg(\alpha(y)G^\omega(\cdot,y)-\alpha(z_1)\Gamma^0(\cdot,y)\bigg)\cdot U^t(y)\,dy}}^2_{(L^2(D_1))^3}
 \nonumber\\
& \hspace{-1.1cm}=\int_{D_1}{\sum_{i=1}^3{|\int_{D_1}{\bigg(\alpha(y)G^\omega(x,y)-\alpha(z_1)\Gamma^0(x,y)\bigg)_i\cdot U^t(y)\,dy}|^2}dx}
\nonumber\\
& \hspace{-1.2cm}
\underset{CSI}{\leq} \norm{U^t}_{(L^2(D_1))^3}^2 \sum_{i=1}^3\sum_{k=1}^{3}\int_{D_1}\int_{D_1}{|(\alpha(y)G^\omega(x,y)-\alpha(z_1)\Gamma^0(x,y))_{ki}|^2\, dy}\,dx
\nonumber\\
&\hspace{-1.2cm}\leq 2\norm{U^t}_{(L^2(D_1))^3}^2 {\sum_{i=1}^3 }{\sum_{k=1}^{3}}\underset{D_1}{\int}\hspace{-0.07cm}\underset{D_1}{\int}\hspace{-0.07cm}\bigg(|\rho_1[G^\omega(x,y)-\Gamma^0(x,y)]_{ki}|^2+|[\rho_0(y)G^\omega(x,y)-\rho_0(z_1){\Gamma^0}(x,y)]_{ki}|^2\bigg)dy\,dx
\nonumber\\
&\hspace{-1.6cm}\underset{\underset{\eqref{G-omega-Gamma-omega-bounded-for-x,y-in-D},\eqref{def-of-rho}}{\eqref{int-D-int-D-|gamma-0-x,y-square}}}{\leq}\hspace{-0.25cm}\norm{U^t}_{(L^2(D_1))^3}^2{\sum_{i=1}^3 }{\sum_{k=1}^{3}}\left[a^2c_1^2(H_3+H_4)^2|B|^2+2C_1^2(H_3+H_4)^2a^6|B|^2+2C_2^2{C_3}\,diam(B)^2a^6\right]
\nonumber\\
&\hspace{-1.2cm}= O\left(a^2\norm{U^t}_{(L^2(D_1))^3}^2\right),\label{Nw-N0_approximation-variable-rho}
\end{align}
where {$CSI$ stands for the Cauchy-Schwarz Inequality}, $C_2$ is a Lipschitz constant associated to the function $\rho_0$ and  $C_1$ is a constant satisfying $|\rho_0(y)|\leq C_1$ in $D_1$ considering that $\rho_0$ is a $C^1$ function. Here, we also used the points that $(G^\omega-\Gamma_z^0)$ bounded entry wise by constant $H_3+H_4$ in $D_1$ and $\underset{D_1}{\int}\,\underset{D_1}{\int}|y-z|^2|\Gamma_z^0(x,y)|^2\,dy\,dx$ is bounded by $diam(B)^2C_3a^6$, see Lemma \ref{lemma-fundamental-Gree-properties-inside-D} and Remark \ref{lemma-int-int-D-|y-z|-Gamma-bounded} in the appendix for the details.

Making use of \eqref{Nw-N0_approximation-variable-rho} in \eqref{term_in_norm_u-variable-rho}, and the substitution of the resulting one in \eqref{norm-ut-parseval} will give us the following estimate:
\begin{eqnarray}
\norm{U^t}_{(L^2(D_1))^3}^2&\leq& \bigg(1+\dfrac{|1-\alpha(z_1)\omega^2\lambda_{n_0}^{D_1}|^2}{\sigma_1}\bigg) \dfrac{1}{|1-\alpha(z_1)\omega^2\lambda_{n_0}^{D_1}|^2}\left(\norm{V^t}_{(L^2(D_1))^3}^2+O(|\omega|^2a^2\norm{U^t}_{(L^2(D_1))^3}^2)\right).\nonumber
 \end{eqnarray}
Hence, using the fact that $1-\frac{a^2}{|1-\alpha(z_1)\omega^2\lambda_{n_0}^{D_1}|^2}$ is uniformly bounded below for $h_1<1$, the estimate \eqref{norm-ut-variiable-rho-lemma} follows.
\bigskip
\\
\mysubnumber\textbf{ Estimation of $\int_{D_1}U^t(y)\,dy$}
\par Applying the first order Taylor series representation to $V^t$ around the point $z_1$ (observe that $V^t$ is $C^3$ as $\rho_0$ is taken to be of class $C^1$), the integral equation \eqref{operator-form-for-variable-rho} for $x\in D_1$ becomes
\begin{eqnarray}
(I-\alpha(z_1)\omega^2N_{D_1}^0)U^t(x)=V^t(z_1)+\hspace{-0.05cm}\int_{0}^{1}{\hspace{-0.25cm} \nabla V^t(z_1+t(x-z_1))\cdot (x-z_1)\,dt}+
\omega^2(T^\omega_{D_1}-\alpha(z_1)N_{D_1}^0)U^t(x) .\label{before-dot-with-W-variable-rho}
\end{eqnarray}
{Knowing that $(I-\alpha(z_1)\omega^2N_{D_1}^0)$ is invertible from Lemma \ref{lemma-properties}, $W_j$ defined in \eqref{def-Wj} is well defined. Hence by taking the dot product with $W$ and integrating  over $D_1$, and using the property that the operator $N_{D_1}^0$  is self-adjoint,   \eqref{before-dot-with-W-variable-rho} becomes }
\begin{align}
\int_{D_1}{U^t(x)} \, dx\,=&
\int_{D_1}{\hspace{-0.2cm}W\cdot V^t(z_1)\,dx}+\mathcal{L}_1+
\mathcal{L}_2,
 \label{int-ut-app-variable-rho}
\end{align}
where
\begin{align}\label{int-ut-app-variable-rho-def-u1-u2}
{\mathcal{L}_1}:=&{\int_{D_1}{\hspace{-0.1cm}W\cdot \hspace{-0.1cm}\int_{0}^{1}{\hspace{-0.26cm} \nabla V^t(z_1+t(x-z_1))\cdot (x-z_1)\,dt}\, dx}}\quad\mbox{ and }\quad
{ \mathcal{L}_2}:=\omega^2{\int_{D_1}\hspace{-0.05cm}W\cdot (T^\omega_{D_1}-\alpha(z_1) N_{D_1}^0)U^t(x)\, dx}.
\end{align}
In order to estimate $\mathcal{L}_1$ and $\mathcal{L}_2$, first  we derive the a priori estimation of $\int_{D_1}{W_i(x)}dx$ and $\norm{W_i}_{(L^2(D_1))^3}
$ for $i=1,2,3$. In this direction, first observe that $\int_{D_1}{e_n^{D_1}}(x)\,dx$ can be expressed as  
\begin{align}
\int_{D_1}\hspace{-0.23cm}{e_n^{D_1}(x)}dx=\int_{D_1}\hspace{-0.23cm}{I\,\cdot e_n^{D_1}(x)dx}
\hspace{-0.2cm}\underset{\eqref{def-Wj}}{=}
\hspace{-0.2cm}\int_{D_1}{\hspace{-0.25cm} (I-\alpha(z_1)\omega^2 N_{D_1}^0)W\cdot e_n^{D_1}(x) dx}
= (1-\alpha(z_1)\omega^2 \lambda_n^{D_1})\int_{D_1}\hspace{-0.25cm}{W\cdot e_n^{D_1}(x) dx}\nonumber,
\end{align}
 and hence we have,
\begin{eqnarray}\label{INP-WI-EN}
\dfrac{1}{(1-\alpha(z_1)\omega^2 \lambda_n^{D_1})}\int_{D_1}{\mathtt{e_i}\cdot e_n^{D_1}(x)\,dx}=\int_{D_1}{W_i\cdot e_n^{D_1}(x)\, dx}=\,\langle\,W_i,\,{e_n^{D_1}}\,\rangle, \, i=1,2,3.
\end{eqnarray}

 Now, we use of Parseval's identity to estimate $\int_{D_1}{W_i\, dx}$ and $\norm{W_i}_{(L^2(D_1))^3}$.
\begin{itemize}
\item[1)] First, we derive
\begin{align}
\vspace{-1cm}
 \int_{D_1}{W_i\, dx}\,=&\sum_{n\in \mathbb{N}}{\langle\,W_i,e_n^{D_1}\,\rangle\,\int_{D_1}{e_n^{D_1}(x)\,dx}}
\underset{\eqref{INP-WI-EN}}{=}\sum_{n}{\dfrac{1}{(1-\alpha(z_1)\omega^2 \lambda_n^{D_1})}\left(\int_{D_1}{\mathtt{e_i}\cdot {e_n^{D_1}}(x)\,dx}\right)\int_{D_1}{e_n^{D_1}(x)\,dx}}\nonumber\\
=\,&\dfrac{1}{(1-\alpha(z_1)\omega^2 \lambda_{n_0}^{D_1})} {\sum_{l=1}^{l_{\lambda_{n_0}}}\left(\mathtt{e_i}\cdot\int_{D_1}{{e_{n_{0_l}}^{D_1}}(x) \,dx}\right)\int_{D_1}{e_{n_{{0}_l}}^D(x)\,dx}}+O(a^3),\, i=1,2,3,\label{int-W1-app-variable-density}
\end{align}
since, $ \int_{D_1}W_{im}\,dx=
\int_{D_1}W_{i}\,dx\cdot \mathtt{e_m}$ and the estimate (see Lemma \ref{lemma-series-convergent-for-n-not-n0-single-paricle} in the appendix) 
$$\sum_{n\neq {n_0}}\dfrac{1}{(1-\alpha(z_1)\omega^2 \lambda_n^{D_1})}\left(\mathtt{e_i}\cdot\int_{D_1}{{e_n^{D_1}}(x)\, dx}\right)\int_{D_1}{e_n^{D_1}(x)\,dx}\cdot \mathtt{e_m}=O( a^3),\, m=1,2,3.$$
\medskip
 
\item [2)] Secondly, we obtain
\begin{align}
\hspace{1.2cm}\norm{W_i}_{(L^2(D_1))^3}^2=&\sum_{n}{|\,\langle\,W_i,\,e_n^{D_1}\,\rangle\,|}^2\underset{\eqref{INP-WI-EN}}{=}\sum_{n}{\dfrac{1}{|1-\alpha(z_1)\omega^2 \lambda_{n}^{D_1}|^2}\;|\,\langle\,\mathtt{e_i},\,e_n^{D_1}\,\rangle\,|^2}
\nonumber\\
=&\dfrac{1}{|1-\alpha(z_1)\omega^2 \lambda_{n_0}^{D_1}|^2}\; {\sum_{l=1}^{l_{\lambda_{n_0}}}|\,\langle\,\mathtt{e_i},\,e_{n_{0_l}}^{D_1}\,\rangle\,|^2}+\sum_{n\neq n_0}{\dfrac{1}{|1-\alpha(z_1)\omega^2 \lambda_{n}^{D_1}|^2}\;|\,\langle\,\mathtt{e_i},\,e_n^{D_1}\,\rangle\,|^2}
\nonumber\\
\underset{CSI}{\lesssim} & \dfrac{1}{|1-\alpha(z_1)\omega^2 \lambda_{n_0}^{D_1}|^2}\,\norm{\mathtt{e_i}}^2_{L^2(D_1)^3}+\sum_{n\neq n_0}{\dfrac{1}{|1-\alpha(z_1)\omega^2 \lambda_{n}^{D_1}|^2}\;|\,\langle\,\mathtt{e_i},e_n^{D_1}\,\rangle\,|^2},
\nonumber
\end{align}
and hence,
\begin{eqnarray}
 \norm{W_i}^2_{(L^2(D_1))^3}\underset{\underset{ \text{Lemma} \,\ref{lemma-series-convergent-for-n-not-n0-single-paricle}}{\eqref{w-wn0-choosen},\eqref{def-sigma}}}{=}& O(a^{3-2h_1})+O(a^3)\implies \norm{W_i}_{(L^2(D_1))^3}=O(a^{\frac{3}{2}-h_1})\mbox{ for } i=1,2,3.\label{norm-W-variable-rho}
\end{eqnarray}
\end{itemize}

Now, with the help of the estimate \eqref{norm-W-variable-rho}, we can estimate $\mathcal{L}_1$ and $\mathcal{L}_2$, defined in \eqref{int-ut-app-variable-rho-def-u1-u2} as follows.

\begin{itemize}
\item[$\star$]  Estimate of $\mathcal{L}_1$:
\begin{align}
|\mathcal{L}_1|^2=\;\,&\sum_{i=1}^3|\int_{D_1}{W_i\cdot\left( \int_{0}^{1}{ \nabla V^t(z_1+t(x-z_1))\cdot (x-z_1)\,dt} \right)\,dx}\,|^2
\nonumber\\
\underset{CSI}{\leq}& \sum_{i=1}^3\norm{W_i}_{(L^2(D_1))^3}^2\norm{\int_{0}^{1}{ \nabla V^t(z_1+t(\cdot-z_1))\cdot (\cdot-z_1)\,dt}}_{(L^2(D_1))^3}^2
\nonumber\\
\leq\;\,& [\sum_{i=1}^3\norm{W_i}_{(L^2(D_1))^3}^2]\;\sum_{k=1}^3\int_{D_1}{\left(a\,diam(B)\,\int_{0}^{1}{| \nabla V_k^t(z_1+t(x-z_1))|\,dt}\right)^2 \,dx}
\nonumber\\
|\mathcal{L}_1|{\underset{\eqref{norm-W-variable-rho}}{=}} &O(a^{4-h_1}). 
\label{I_1-app-variable-rho}
\end{align}

\item[$\star$]  Estimate of $\mathcal{L}_2$:
\begin{align}
|\mathcal{L}_2|^2&=|\omega^2|^2\sum_{i=1}^3|\int_{D_1}W_i\cdot (T^\omega_{D_1}-\alpha(z_1)N_{D_1}^0)U^t(x)\, dx|^2
\nonumber\\
&\underset{\underset{\eqref{Nw-N0_approximation-variable-rho}}{CSI}}{\leq}  |\omega^2|^2\norm{U^t}_{(L^2(D_1))^3}^2\sum_{i=1}^3\norm{W_i}^2_{(L^2(D_1))^3}3^2\bigg(2a^2c_1^2(H_3+H_4)^2|B|^2+4C_1^2(H_3+H_4)^2a^6|B|^2+4C_2^2\,diam(B)^2a^6{C_3}
\bigg)
\nonumber\\
&\underset{\eqref{norm-W-variable-rho}}{=} O(|\omega^2|^2\norm{U^t}_{(L^2(D_1))^3}^2 a^{5-2h})
\nonumber\\
|\mathcal{L}_2|&\underset{\eqref{norm-ut-variiable-rho-lemma}}{=}O\left(\dfrac{\omega^2}{|1-\alpha(z_1)\omega^2\lambda_{n_0}^{D_1}|}\norm{V^t}_{(L^2(D_1))^3} a^{\frac{5}{2}-h_1}\right)\underset{\eqref{w-wn0-choosen}}{=}O(a^{\frac{5}{2}-2h_1}{\norm{V^t}_{(L^2(D_1))^3}})
=O(a^{4-2h_1}).\label{I2-approximation-variable-rho}
\end{align}
\end{itemize}
Finally, substitution of \eqref{I_1-app-variable-rho} and \eqref{I2-approximation-variable-rho} in \eqref{int-ut-app-variable-rho} gives the required result \eqref{final-iny-Ut-app-variable-rho-lemma}.
\end{proof}

\subsection{  {Estimation of scattered field}}\hfill\\
In this section, we show the {proof of \eqref{theorem-scatteredfield-mp} stated in} Theorem \ref{theorem} using the Lippmann Schwinger equation and the a priori estimates mentioned in Proposition \ref{lemma-apriori-estimates}.

From the Lippmann Schwinger equation \eqref{lippmann-in-R3-variable-rho-lemma} of the problem (\ref{internal-elastic-variable-rho}--\ref{KRC-U-s-variable-rho}) and making use of the Taylor series expansion of $G^\omega(x,y)$ about $y$ near $z_1$, the scattered field $U^s(x)$ for $x$ away from $D_1$ can be expressed as,
\begin{align}
U^s(x)=&V^s(x)+\omega^2\int_{D_1}{\ \alpha(y)G^\omega(x,y)\cdot U^t(y)\,dy}
\nonumber\\
=&V^s(x)+\omega^2\int_{D_1}{ \alpha(z_1)G^\omega(x,z_1)\cdot U^t(y)\,dy}+\omega^2\mathcal{A}_1,
\label{final-step-after-taylor-series-variable-rho}
\end{align}
with
\begin{align}
\mathcal{A}_1(k)&:=\int_{D_1}\int_{0}^{1}{\hspace{-0.2cm} \nabla_y\left[\alpha(z_1+t(y-z_1))G_k^\omega(x,z_1+t(y-z_1))\right]\cdot (y-z_1)dt}\cdot U^t(y)dy
= \mathcal{Q}_1(k) + \mathcal{Q}_2(k),\;\, k=1,2,3,\nonumber
\end{align}
where $\mathcal{Q}_l(k)$, $l=1,2$ represents the entries of column vectors $\mathcal{Q}_l$, $l=1,2$, given by 
\begin{align}
\mathcal{Q}_1(k)&:=
\int_{D_1}{\left(\int_{0}^{1}{ [\alpha(z_1+t(y-z_1))\nabla_y G_k^\omega(x,z_1+t(y-z_1))] \cdot (y-z_1)\,dt}\right) \cdot U^t(y)\,dy},
 \label{def-S1} \\
\mbox{and  }\;\mathcal{Q}_2(k)&:=
\int_{D_1}{\left(\int_{0}^{1}{ [G_k^\omega(x,z_1+t(y-z_1))\nabla_y\alpha(z_1+t(y-z_1))]\cdot (y-z_1)dt}\right)\cdot U^t(y)\,dy},
 \label{def-S2} \; \mbox{ for } k=1,2,3.
\end{align}
Knowing that $|\alpha(y)|=|\rho_1-\rho_0(y)|=O(a^{-2})$, $ \int_{0}^{1}|\nabla_yG^{\omega}(x,z_1+t(y-z_1))_{ij}|dt$ and $G^\omega(x,y)$ are  bounded by constants for well seperated $x$ and $y$, (see \eqref{def-of-rho},\eqref{def-alpha} and Remark \ref{lemma-grad-G-for-x-away-from-omega}), we can approximate $\mathcal{A}$ as;
\begin{eqnarray}
|\mathcal{A}_1|^2&\leq& 2(|\mathcal{Q}_1|^2+|\mathcal{Q}_2|^2)\nonumber\\
& \leq&  2 \left[\sum_{i=1}^3|\int_{D_1}{\left(\int_{0}^{1}{ [\alpha(z_1+t(y-z_1))\nabla_y G_i^\omega(x,z_1+t(y-z_1))] \cdot (y-z_1)\,dt}\right) \cdot U^t(y)\,dy}|^2\right.
\nonumber\\
&& +\left.\sum_{i=1}^3|\int_{D_1}{\left(\int_{0}^{1}{ [G_i^\omega(x,z_1+t(y-z_1))\nabla\alpha(z_1+t(y-z_1))]\cdot (y-z_1)\,dt}\right)\cdot U^t(y)\,dy}|^2\right]
\nonumber\\
&=&O(a\norm{U^t}^2_{(L^2(D_1))^3}+a^{5}\norm{U^t}^2_{(L^2(D_1))^3})\nonumber\\
\implies |\mathcal{A}_1|&=& O(a^{\frac{1}{2}}\norm{U^t}_{(L^2(D_1))^3}).
\label{variable-density-final-B1-app}
\end{eqnarray}
Making use of the behavior of $\mathcal{A}_1$ from \eqref{variable-density-final-B1-app} and the a priori estimates of $\norm{U^t}_{(L^2(D_1))^3}$ and $\int_{D_1}{U^t(y)dy}$, stated in \eqref{norm-ut-variiable-rho-lemma} and \eqref{final-iny-Ut-app-variable-rho-lemma} of Proposition \ref{lemma-apriori-estimates}, we can rewrite \eqref{final-step-after-taylor-series-variable-rho} as 
\begin{eqnarray}
U^s(x)\hspace{-0.35cm}&=&\hspace{-0.35cm}V^s(x)+\omega^2\alpha(z_1)G^\omega(x,z_1)\cdot\left( \int_{D_1}{\hspace{-0.23cm}W \,dx\cdot V^t(z_1)}+O(a^{4-2h})\right)+O\left( \dfrac{a^{\frac{1}{2}}\omega^2}{|1-\alpha(z_1)\omega^2\lambda_{n_0}^{D_1}|} {\norm{V^t}}_{(L^2(D_1))^3}\right)
\nonumber\\
&\underset{\eqref{w-wn0-choosen}}{=}&\hspace{-0.35cm} V^s(x)+\omega^2\alpha(z_1)G^\omega(x,z_1)\cdot\int_{D_1}{W\,dx}\cdot V^t(z_1)+O(a^{4-2h_1}|\alpha(z_1)G^\omega(x,z_1)|)+\omega^2O(a^{\frac{1}{2}-h_1}a^{\frac{3}{2}}).\label{us-app-step-1-variable-rho}
\end{eqnarray}
In addition, plugging the expression \eqref{int-W1-app-variable-density} and the scaling property \eqref{scaling-prop-c-o-s} related to eigenvectors in the above approximation \eqref{us-app-step-1-variable-rho}, we deduce the following approximation
\begin{align}
U^s(x)=&V^s(x)+\frac{\omega^2}{(1-\alpha(z_1)\omega^2\lambda_{n_0}^{D_1})}\alpha(z_1)G^\omega(x,z_1)\cdot E_{n_0}^{D_1}\cdot  V^t(z )+\omega^2V^t(z_1)O(a^3|\alpha(z_1)G^\omega(x,z_1)|)
\nonumber\\
& +O(a^{4-2h_1}|\alpha(z_1)G^\omega(x,z_1)|)+\omega^2O(a^{2-h_1}), \nonumber
\end{align}
where the matrix $E_{n_0}^{D_1}:={\sum_{l=1}^{l_{\lambda_{n_0}}}\int_{D_1}{e_{n_{0_l}}^{D_1}}(x)\,dx\otimes\int_{D_1}e_{n_{0_l}}^{D_1}(x)\,dx}$.
Again, using the facts that $\omega_{n_0}^{-2}= \rho_1\lambda_{n_0}^{D_1}$, $|\alpha(y)|=|\rho_1-\rho_0(y)|=O(a^{-2})$ and $G^\omega(x,y)$ is bounded by constant for $y\in D$ and $x$ away from $\Omega$ (see \eqref{w-wn0-choosen} and \eqref{def-of-rho}) 
and Remark \ref{lemma-grad-G-for-x-away-from-omega},
 we get the following approximation of the scattered field $U^s(x)$, which is nothing but the required estimation \eqref{theorem-scatteredfield-mp}:
\begin{align}
U^s(x,\theta)=V^s(x,\theta)+\frac{\omega^2\omega_{n_0}^2}{(\omega_{n_0}^2-\omega^2)} \rho_1G^\omega(x,z_1)\cdot  E_{n_0}^{D_1}\cdot V^t(z_1,\theta)+{O(a)}{ +O\left(a^{\min\{1, 2-2h_1\}}\right)} , \label{scatteredfield-variable-mass-density-Proof}
\end{align}
uniformly for the incident direction $\theta\in\mathbb{S}^2$.
\subsection{{End of the proof of Theorem \ref{theorem}} (Estimation of farfields)}\label{section-mass-density-reconstruction}
\hfill\\
{In this section, firstly we show the proof of \eqref{theorem-p-part-farfield-mp} in Theorem \ref{theorem} using Lippmann Schwinger equation \eqref{lippmann-in-R3-variable-rho-lemma}, apriori estimates mentioned in Proposition \ref{lemma-apriori-estimates} and mixed reciprocity relation mentioned in Lemma \ref{lemma-mixed-reciprocity-elastic}. Secondly, by following the same arguments, we prove the result \eqref{theorem-farfield-s-part-mp} in Theorem \ref{theorem}.
\subsubsection{Proof of \eqref{theorem-p-part-farfield-mp}}\hfill\\
Consider the Lippmann Schwinger equation \eqref{lippmann-in-R3-variable-rho-lemma},
\begin{equation}\label{scattered-field-L-E-far-est}
U^s(x)=V^s(x)+\int_{D_1}{\alpha(y)\omega^2G^\omega(x,y)\cdot U^t(y)\, dy}.
\end{equation}
Using asymptotic expansions of $U^s(\cdot)$ and $G^\omega(\cdot,y)$ i.e., \eqref{Us-assymptotic-expansion} and \eqref{assymptotic-expansion-Green} respectively in \eqref{scattered-field-L-E-far-est} we get
\begin{align}
    &U^\infty_{\mathtt{s}}(\hat{x},\theta)=V^\infty_{\mathtt{s}}(\hat{x},\theta)+\int_{D_1}\alpha(y)\omega^2G_\mathtt{s}^{\omega,\infty}(\hat{x},y)\cdot U^t(y,\theta)dy\label{s-part-Ut-infty-IE}
\\
\mbox{and }\,&U^\infty_{\mathtt{p}}(\hat{x},\theta)=V^\infty_{\mathtt{p}}(\hat{x},\theta)+\int_{D_1}\alpha(y)\omega^2G_\mathtt{p}^{\omega,\infty}(\hat{x},y)\cdot U^t(y,\theta)dy\label{p-part-Ut-inft-IE}
    \end{align}
make use of Taylor series expansion of $\alpha(y)G^{\omega,\infty}_k(y,-\hat{x})$, $k=1,2,3$, about $y$ near $z$ in \eqref{p-part-Ut-inft-IE}, we get
\begin{align}
   U^\infty_{\mathtt{p}}(\hat{x},\theta)=& V^\infty_{\mathtt{p}}(\hat{x},\theta)+\int_{D_1}\omega^2\alpha(z_1)G_\mathtt{p}^{\omega,\infty}(\hat{x},z)\cdot U^t(y,\theta)\,dy+\mathtt{Ep}\label{p-part-farfield-expression} 
\end{align}
where 
\begin{align}
    \mathtt{Ep}(k):={\omega^2}\int_{D_1}\int_{0}^1\nabla_y \left(\alpha(z_1+t(y-z_1))(G^{\omega,\infty}_{\mathtt{p}}(\hat{x},z_1+t(y-z_1)))_k\right)\cdot (y-z_1) dt\cdot U^t(y,\theta) dy,\; k=1,2,3
\end{align}
\begin{align}
|\mathtt{Ep}|=&\left(\sum_{k=1}^3|{\omega^2}\int_{D_1}\int_{0}^1\nabla_y \left(\alpha(z_1+t(y-z_1))(G^{\omega,\infty}_{\mathtt{p}}(\hat{x},z_1+t(y-z_1)))_k\right)\cdot (y-z_1) dt\cdot U^t(y,\theta) dy|^2\right)^{\frac{1}{2}}
        \nonumber\\
        \leq &\left(\sum_{k=1}^3|{\omega^2}|\norm{\int_{0}^1\nabla\left(\alpha(z_1+t(\cdot-z_1))(G^{\omega,\infty}_{\mathtt{p}}(\hat{x},z_1+t(\cdot-z_1)))_k\right)\cdot (\cdot-z_1) dt}_{(L^2(D_1))^3}^2\norm{ U^t}_{(L^2(D_1))^3}^2 \right)^{\frac{1}{2}}
        \nonumber\\
        \leq &\norm{ U^t}_{(L^2(D_1))^3}\left(\sum_{k=1}^3|{\omega^2}|\left[\norm{\int_{0}^1[\alpha(z_1+t(\cdot-z_1))\nabla(G^{\omega,\infty}_{\mathtt{p}}(\hat{x},z_1+t(\cdot-z_1)))_k]\cdot (\cdot-z_1)dt}_{(L^2(D_1))^3}\right.\right.
    \nonumber\\
&\left.\left.+\norm{\int_{0}^1[(G^{\omega,\infty}_{\mathtt{p}}(\hat{x},z_1+t(\cdot-z_1)))_k\otimes\nabla \alpha(z_1+t(\cdot-z_1))]\cdot (\cdot-z_1) dt}_{(L^2(D_1))^3}
\right]^2 \right)^{\frac{1}{2}}
\nonumber\\
= &O(\norm{ U^t}_{(L^2(D_1))^3}(a^{\frac{3}{2}}a^{-2}a^{1}+a^{\frac{3}{2}}a^1))
\nonumber\\
\underset{\eqref{norm-ut-variiable-rho-lemma}}{=}&O(\frac{1}{|1-\alpha(z_1)\omega^2\lambda_{n_0}^{D_1}|}\norm{V^t}_{(L^2(D_1))^3}a^{\frac{1}{2}})=O(a^{2-h_1}).\label{E-r-estimation-in-terms-of-a}
\end{align}
 By using \eqref{E-r-estimation-in-terms-of-a} and mixed reciprocity relation \eqref{lemma-mixed-reciprocity-elastic-p} in \eqref{p-part-farfield-expression} we get 
\begin{align}
   U^\infty_{\mathtt{p}}(\hat{x},\theta){=}& V^\infty_{\mathtt{p}}(\hat{x},\theta)+\omega^2\alpha(z_1)G_\mathtt{p}^{\omega,\infty}(\hat{x},z)\cdot \int_{D_1}U^t(y,\theta)\,dy+O(a^{2-h_1})
\nonumber\\
\underset{\eqref{lemma-mixed-reciprocity-elastic-p}}{=}& V^\infty_{\mathtt{p}}(\hat{x},\theta)+\frac{\omega^2}{4\pi(\lambda+2\mu)\beta_1}\alpha(z_1)(\hat{x}\cdot \int_{D_1}U^t(y,\theta)dy)V^{t,\mathtt{p}}(z_1,-\hat{x})+O(a^{2-h_1})
\end{align}
\begin{align}
   & U^\infty_{\mathtt{p}}(\hat{x},\theta) \hspace{-0.05cm} \underset{\eqref{final-iny-Ut-app-variable-rho-lemma}}{=}\hspace{-0.07cm}V^\infty_{\mathtt{p}}(\hat{x},\theta)+\frac{\omega^2}{4\pi(\lambda+2\mu)\beta_1}\left(\hat{x}\cdot \left( \int_{D_1}{W \,dx} \cdot V^t(z_1,\theta)\right)+O(a^{4-2h})\right)\alpha(z_1)V^{t,\mathtt{p}}(z_1,-\hat{x})+O(a^{2-h_1})
    \nonumber\\
    &\underset{\eqref{int-W1-app-variable-density}}{=}V^\infty_{\mathtt{p}}(\hat{x},\theta)\hspace{-0.07cm}+\hspace{-0.07cm}\frac{\omega^2\alpha(z_1)}{4\pi(\lambda+2\mu)\beta_1}\,\hat{x}\hspace{-0.07cm}\cdot \hspace{-0.1cm} \left(\hspace{-0.03cm}\left[\hspace{-0.07cm}\frac{1}{(1-\alpha(z_1)\omega^2\lambda_{n_0}^{D_1})}E_{n_0}^{D_1}\hspace{-0.07cm}+\hspace{-0.07cm}O(a^3)\right]\hspace{-0.1cm} \cdot \hspace{-0.1cm}V^t(z_1,\theta)\hspace{-0.1cm}\right)V^{t,\mathtt{p}}(z_1,-\hat{x})
\hspace{-0.07cm}+\hspace{-0.07cm}O(a^{2-2h})\hspace{-0.07cm}\hspace{-0.07cm}+\hspace{-0.07cm}O(a^{2-h_1})
    \nonumber\\
     &{=}V^\infty_{\mathtt{p}}(\hat{x},\theta)+\frac{\omega^2}{4\pi(\lambda+2\mu)\beta_1}\frac{\alpha(z_1)}{(1-\alpha(z_1)\omega^2\lambda_{n_0}^{D_1})}\left(\hat{x}\cdot \left( E_{n_0}^{D_1} \cdot V^t(z_1,\theta)\right)\right)V^{t,\mathtt{p}}(z_1,-\hat{x})+O(a^1)
    +O(a^{2-2h_1})
     \label{farfield-variable-mass-density-proof}
\end{align}
which is nothing but our required farfield approximation \eqref{theorem-p-part-farfield-mp} for $j=1$, by using \eqref{def-of-rho}, \eqref{def-alpha} and \eqref{w-wn0-choosen}, uniformly in all the directions of $\hat{x},\theta\in\mathbb{S}^2$.
\\
 One can observe that for $\mathtt{p}$-incident wave $U_{\mathtt{p}}^i$, and making use of \eqref{def-of-rho}, \eqref{def-alpha} and \eqref{w-wn0-choosen}, the equation \eqref{farfield-variable-mass-density-proof} becomes
\begin{align}
  U^{\infty,\mathtt{p}}_{\mathtt{p}}(\hat{x},\theta)   {=}&V^{\infty,\mathtt{p}}_{\mathtt{p}}(\hat{x},\theta)\hspace{-0.05cm}+\hspace{-0.05cm}\frac{\rho_1}{4\pi(\lambda+2\mu)\beta_1}\frac{\omega^2\omega_{n_0}^2}{(\omega_{n_0}^2-\omega^2)}\left(\hspace{-0.05cm}\hat{x}\cdot \hspace{-0.05cm}\left( E_{n_0}^{D_1} \hspace{-0.05cm}\cdot V^{t,\mathtt{p}}(z,\theta)\right)\hspace{-0.1cm}\right)V^{t,\mathtt{p}}(z_1,-\hat{x})\hspace{-0.05cm}+\hspace{-0.05cm}O(a^{\min\{1,2-2h_1\}}).\label{farfield-variable-mass-density-p-part-incident}
\end{align}
uniformly for the incident and observation directions $\theta,\hat{x}\in\mathbb{S}^2$ respectively.
 }
 
\subsubsection{Proof of \eqref{theorem-farfield-s-part-mp}}
\hfill\\
 {
To derive the farfield estimation \eqref{theorem-farfield-s-part-mp}, first we consider the equation \eqref{s-part-Ut-infty-IE} and make use of Taylors series expansion of $\alpha(y)G_\mathtt{s}^{\omega,\infty}(x,y)$ about $y$ near $z_1$ to get 
\begin{eqnarray}
   U^\infty_{\mathtt{s}}(\hat{x},\theta)=V^\infty_{\mathtt{s}}(\hat{x},\theta)+\int_{D_1}\alpha(z_1)\omega^2G_\mathtt{s}^{\omega,\infty}(\hat{x},z)\cdot U^t(y)dy  + \mathtt{Es}\label{s-part-Ut-IE-after-Taylor-exp}
\end{eqnarray}
where 
\begin{eqnarray}
    \mathtt{Es}(k):= {\omega^2}\int_{D_1}\int_{0}^1\nabla_y \left(\alpha(z_1+t(y-z_1))(G^{\omega,\infty}_{\mathtt{s}}(\hat{x},z_1+t(y-z_1)))_k\right)\cdot (y-z_1) dt\cdot U^t(y,\theta) dy,\; k=1,2,3
\end{eqnarray}
\begin{align}
        |\mathtt{Es}| \leq &\left(\sum_{k=1}^3|{\omega^2}|\norm{\int_{0}^1\nabla\left(\alpha(z_1+t(\cdot-z_1))(G^{\omega,\infty}_{\mathtt{s}}(\hat{x},z_1+t(\cdot-z_1)))_k\right)\cdot (\cdot-z_1) dt}_{(L^2(D_1))^3}^2\norm{ U^t}_{(L^2(D_1))^3}^2 \right)^{\frac{1}{2}}
        \nonumber\\
        \leq &\norm{ U^t}_{(L^2(D_1))^3}\left(\sum_{k=1}^3|{\omega^2}|\left[\norm{\int_{0}^1([\alpha(z_1+t(\cdot-z_1))\nabla(G^{\omega,\infty}_{\mathtt{p}}(\hat{x},z_1+t(\cdot-z_1)))_k]\cdot (\cdot-z_1)dt}_{(L^2(D_1))^3}\right.\right.
    \nonumber\\
&\left.\left.+\norm{\int_{0}^1[(G^{\omega,\infty}_{\mathtt{p}}(\hat{x},z_1+t(\cdot-z_1)))_k\otimes\nabla \alpha(z_1+t(\cdot-z_1))]\cdot (\cdot-z_1) dt}_{(L^2(D_1))^3}
\right]^2 \right)^{\frac{1}{2}}
\nonumber\\
&\underset{\eqref{norm-ut-variiable-rho-lemma}}{=}O(\frac{1}{|1-\alpha(z_1)\omega^2\lambda_{n_0}^{D_1}|}\norm{V^t}_{(L^2(D_1))^3}a^{\frac{1}{2}})=O(a^{2-h_1}).\label{E-s-estimation-in-terms-of-a}
\end{align}
Substituting \eqref{E-s-estimation-in-terms-of-a} in \eqref{s-part-Ut-IE-after-Taylor-exp} and using mixed reciprocity relation \eqref{lemma-mixed-reciprocity-relation-s-incident-wave} we get
\begin{align}
   U^\infty_{\mathtt{s}}(\hat{x},\theta)=&V^\infty_{\mathtt{s}}(\hat{x},\theta)+\int_{D_1}\alpha(z_1)\omega^2G_\mathtt{s}^{\omega,\infty}(\hat{x},z_1)\cdot U^t(y)dy  + O(a^{2-h_1})
   \nonumber\\
\underset{\eqref{lemma-mixed-reciprocity-relation-s-incident-wave}}{=}&  V^\infty_{\mathtt{s}}(\hat{x},\theta)+ \alpha(z_1)\omega^2\left[ \frac{(\int_{D_1}U^t(y)dy\cdot\hat{x}^{\perp_h})}{4\pi\mu\beta_{2_h}} V^{t,\mathtt{s}_h}(z_1,-\hat{x})+\frac{(\int_{D_1}U^t(y)dy\cdot\hat{x}^{\perp_v})}{4\pi\mu\beta_{2_v}} V^{t,\mathtt{s}_v}(z_1,-\hat{x})\right].\label{s-part-Ut-after-int-Ut-expansion}
\end{align}
Make use of apriori estimate \eqref{final-iny-Ut-app-variable-rho-lemma}, definitions \eqref{def-alpha} and \eqref{omega-Minnaert} in \eqref{s-part-Ut-after-int-Ut-expansion}, we get the desired farfield estimation \eqref{theorem-farfield-s-part-mp}, for $j=1$,
\begin{align}
     U^\infty_{\mathtt{s}}(\hat{x},\theta)=&V^\infty_{\mathtt{s}}(\hat{x},\theta)+\hspace{-0.05cm}\frac{\rho_1}{4\pi\mu}\frac{\omega^2\omega_{n_0}^2}{\omega_{n_0}^2-\omega^2} \left[\frac{(E_{n_0}^{D_1}\hspace{-0.05cm}\cdot\hspace{-0.05cm} V^{t}(z,\theta))\hspace{-0.05cm}\cdot \hat{x}^{\perp_h}}{\beta_{2_h}} V^{t,\mathtt{s}_h}(z_1,-\hat{x})\hspace{-0.05cm}+\hspace{-0.05cm}\frac{(E_{n_0}^{D_1}\cdot V^{t}(z,\theta))\cdot \hat{x}^{\perp_v}}{\beta_{2_v}} V^{t,\mathtt{s}_v}(z_1,-\hat{x})\right] \nonumber\\
     &+ O(a)+O(a^{2-2h_1}).\label{s-part-farfield-expansion-in-combination-withs-h-and-s-v}
\end{align}
For the incident wave $U^{i}_{\mathtt{s}_h}$, equation \eqref{s-part-farfield-expansion-in-combination-withs-h-and-s-v} becomes
\begin{align}
U^{\infty,\mathtt{s}_h}_{\mathtt{s}}(\hat{x},\theta)\hspace{-0.07cm}=&V^{\infty,\mathtt{s}_h}_{\mathtt{s}}(\hat{x},\theta)\hspace{-0.07cm}+\hspace{-0.1cm}\frac{\rho_1}{4\pi\mu\beta_{2}}\frac{\omega^2\omega_{n_0}^2}{\omega_{n_0}^2\hspace{-0.18cm}-\omega^2} \hspace{-0.08cm}\left[(E_{n_0}^{D_1}\hspace{-0.1cm}\cdot\hspace{-0.05cm} V^{t,\mathtt{s}_h}(z_1,\theta))\hspace{-0.05cm}\cdot\hspace{-0.05cm} \hat{x}^{\perp_h}V^{t,\mathtt{s}_h}(z_1,-\hat{x})\hspace{-0.07cm}+\hspace{-0.07cm}(E_{n_0}^{D_1}\hspace{-0.1cm}\cdot\hspace{-0.05cm} V^{t,\mathtt{s}_h}(z_1,\theta))\hspace{-0.05cm}\cdot\hspace{-0.05cm} \hat{x}^{\perp_v} V^{t,\mathtt{s}_v}(z_1,-\hat{x})\right] \nonumber\\
     &+ O(\max\{a,a^{2-2h_1})\label{s-part-farfield-exp-in-h-incident}
\end{align}
and for $U^{i}_{\mathtt{s}_v}$
\begin{align}
U^{\infty,\mathtt{s}_v}_{\mathtt{s}}(\hat{x},\theta)\hspace{-0.1cm}=&V^{\infty,\mathtt{s}_v}_{\mathtt{s}}(\hat{x},\theta)\hspace{-0.07cm}+\hspace{-0.07cm}\frac{\rho_1}{4\pi\mu\beta_{2}}\frac{\omega^2\omega_{n_0}^2}{\omega_{n_0}^2\hspace{-0.15cm}-\omega^2} \hspace{-0.07cm}\left[(E_{n_0}^{D_1}\hspace{-0.07cm}\cdot \hspace{-0.05cm}V^{t,\mathtt{s}_v}(z_1,\theta))\hspace{-0.05cm}\cdot\hspace{-0.05cm} \hat{x}^{\perp_h}V^{t,\mathtt{s}_h}(z_1,-\hat{x})\hspace{-0.07cm}+\hspace{-0.07cm}(E_{n_0}^{D_1}\hspace{-0.07cm}\cdot \hspace{-0.05cm}V^{t,\mathtt{s}_v}(z_1,\theta))\hspace{-0.05cm}\cdot \hspace{-0.05cm}\hat{x}^{\perp_v} V^{t,\mathtt{s}_v}(z_1,-\hat{x})\right] 
     \nonumber\\
    & + O(\max\{a,a^{2-2h_1})\label{s-part-farfield-exp-in-v-incident},
\end{align}
uniformly in all the directions of $\hat{x}$ and $\theta$ in $\mathbb{S}^2$.
By choosing $\theta=-\hat{x}$, and using \eqref{w-wn0-choosen} and \eqref{def-alpha}, in \eqref{s-part-farfield-exp-in-h-incident} and \eqref{s-part-farfield-exp-in-v-incident} we get \eqref{theorem-s-h-incident-farfield-backscat} and \eqref{theorem-s-v-incident-farfield-backscat} respectively. 
\hfill \qed
}
\section{ Proof of Theorem \ref{theorem}\label{section-proofs-forward-problem-j-M} for general $M$}
 The corresponding Lippmann Schwinger equation for the problem (\ref{internal-elastic-variable-rho}--\ref{KRC-U-s-variable-rho}) is
 given by
    \begin{align}\label{Lippmann-Multipleinclusions-Variable-density-mp}
U^s(x)=V^s(x)+\sum_{j=1}^M\int_{D_j}\omega^2\alpha(y)G^\omega(x,y)\cdot U^t(y)dy,\quad x\in\mathbb{R}^3.
\end{align}
 

The apriori estimates satisfied by the total field associated with this case are stated below as Lemma \ref{lemma-apriori-multiple-inclusions-normU-int-u-abydsmall}, for which the proof is discussed briefly in Section \ref{Technical-lemmas}. With the help of the estimates stated in this lemma, we prove the results in Theorem \ref{theorem}.
    \begin{lemma}[Apriori estimates]\label{lemma-apriori-multiple-inclusions-normU-int-u-abydsmall}
    
  The total field $U^t$  enjoys the following estimates:
\begin{align}\setcounter{mysubequations}{0}
     \mysubnumber&\sum_{j=1}^M\norm{U^t}_{(L^2(D_j))^3}^2\lesssim a^{-2h}\sum_{j=1}^M\norm{V^t}_{(L^2(D_j)^3}^2,\label{norm-u-app-mp}
\\
\mysubnumber& \int_{D_j}U^t(x)dx= \dfrac{1}{(1-\alpha(z_j)\omega^2 \lambda_{n_0}^j)}E_{n_0}^{D_j}\cdot V^t(z_j)+O\left(\max\{a^3,Ma^{3-2h}\frac{a}{d} \}\right),\label{int-ut-app-Dj-multiple-particle-lemma-mp}
 \end{align}
   whenever $(M-1)\frac{a}{d}< a^h$, where $0< h[:=\underset{j=1,\cdots,M}{\max}\{h_j\}] <1$ and for the incident frequency $\omega$ satisfying
   \begin{align}
|\omega^2\hspace{-.1cm}-\omega_{n_{0_{(j)}}}^2\hspace{-.07cm}|\sim a^{h_j},\mbox{ with }  \omega_{n_{0_{(j)}}}^2\hspace{-.2cm}:=\hspace{-.1cm}\frac{1}{\rho_j\lambda_{n_{0}}^j},\,  j=1,2,\cdots,M,\mbox{ and for } E_{n_0}^{D_j}\hspace{-.1cm}:=\hspace{-.1cm}\sum_{l=1}^{l_{\lambda_{n_0}^j}}\hspace{-.1cm}{\int_{D_j}\hspace{-.2cm}{e_{n_0,l}^j(x)dx}}\otimes\hspace{-.2cm}\int_{D_j}\hspace{-.2cm}{e_{n_0,l}^j(x)dx}.\label{omega-n0-j-chosen-E-matrix-scaling}
\end{align} 
Here, $(\lambda_n^j, e^j_{n,l})_{n\in\mathbb{N},\; l=1,\cdots l_{\lambda_{n}^j}}$ is the complete orthonormal eigensystem of the compact, self adjoint operator $N_{D_j}^0$\footnote{Here, $N_{D_j}^0$ is same as the operator defined  in \eqref{def-Newton-operator} but associated to the inclusion $D_j$ instead of $D$.} with $l_{\lambda_{n}^j}$ indicating the geometric multiplicity of the eigen value $\lambda_n^j$ and $n_0$ being a chosen fixed natural number.
\end{lemma}
\subsection{Estimation of scattered field}
 To estimate the scattered field, we start with considering the Lippmann Schwinger equation \eqref{Lippmann-Multipleinclusions-Variable-density-mp} and employ the Taylor series expansion of $\alpha(y)G^\omega(x,y)$ about $y$ near $z_j$. For $x$ away from $\cup_{j=1}^M D_j$, we have
\begin{align}
U^s(x)=V^s(x)+\sum_{j=1}^M\left[\int_{D_j}\alpha(z_j)\omega^2G^\omega(x,z_j)\cdot U^t(y)dy+\mathcal{A}_j\right]\label{scattered-field-eqn-with-A-j-term-mp}
\end{align}
where  $\mathcal{A}_j$ is defined as
\begin{align}
\mathcal{A}_j(k):=\omega^2\int_{D_j}\int_{0}^1\nabla_y(\alpha(z_j+t(y-z_j))G^\omega(x,z_j+t(y-z_j))_k)\cdot(y-z_j)\cdot U^t(y)dy,\; k=1,2,3\nonumber
\end{align}
and can be estimated as
\begin{align}
|\mathcal{A}_j|&\underset{\eqref{variable-density-final-B1-app}}{=}O(a^{\frac{1}{2}}\norm{U^t}_{(L^2(D_1))^3}).\label{A-j-estimation-mp}
\end{align}
Substituting \eqref{A-j-estimation-mp} in \eqref{scattered-field-eqn-with-A-j-term-mp}, we obtain
\begin{align}
U^s(x)&\underset{\eqref{scattered-field-eqn-with-A-j-term-mp},\eqref{norm-u-app-mp}}
{=}V^s(x)+\sum_{j=1}^M\omega^2\alpha(z_j)G^\omega(x,z_j)\cdot\int_{D_j} U^t(y)dy+O(M a^{2-h})
\nonumber\\
&\underset{\eqref{int-ut-app-Dj-multiple-particle-lemma-mp}}{=}\hspace{-0.15cm}V^s(x)\hspace{-0.05cm}+\hspace{-0.1cm}\sum_{j=1}^M\hspace{-0.1cm}\dfrac{\omega^2\alpha(z_j)}{(1-\alpha(z_j)\omega^2 \lambda_{n_0}^j)}G^\omega(x,z_j)\hspace{-0.05cm}\cdot\hspace{-0.05cm}E_{n_0}^{D_j}\cdot V^t(z_j)
 \hspace{-0.05cm}+\hspace{-0.05cm}O\left(\max\{Ma, M^2a^{1-2h}\frac{a}{d}  \}\right)
\end{align}
which is nothing but the required scattered field expansion by using \eqref{def-alpha} and \eqref{omega-n0-j-chosen-E-matrix-scaling}.
\subsection{Estimation of farfields}
 To estimate the farfield, consider the Lippmann Schwinger equation \eqref{Lippmann-Multipleinclusions-Variable-density-mp} and  employ the asymptotic expansion of $G^{\omega}$  and the Taylor series expansion of $\alpha(y)G_k^{\omega,\infty}(\hat{x},y)$
 about $y$ near $z_j$, for $k=1,2,3$ and $j=1,2,\cdots, M$. Following the similar procedure as  in Section \ref{section-mass-density-reconstruction}, we get the shear and pressure part of the farfield as follows:
 \begin{align}
   U^\infty_{\mathtt{s}}(\hat{x},\theta)=V^\infty_{\mathtt{s}}(\hat{x},\theta)+\sum_{j=1}^M\int_{D_j}\omega^2\alpha(z_j)G^{\omega,\infty}_{\mathtt{s}}(\hat{x},z_j)\cdot U^t(y,\theta)dy +\sum_{j=1}^M\mathtt{Es}_{j}\label{s-part-Ut-IE-mp}
\\
\mbox{and }\;\,   U^\infty_{\mathtt{p}}(\hat{x},\theta)=V^\infty_{\mathtt{p}}(\hat{x},\theta)+\sum_{j=1}^M\int_{D_j}\omega^2\alpha(z_j)G^{\omega,\infty}_{\mathtt{p}}(\hat{x},z_j)\cdot U^t(y,\theta)dy +\sum_{j=1}^M\mathtt{Ep}_{j}\label{scat-field-estimation-withEpj-term}
 \end{align}
 where $\mathtt{Es}_{j}$ and $\mathtt{Ep}_{j}$, for $j=1,2,\cdots,M$ are defined as
 \begin{align}
  \mathtt{Es}_{j}(k):=\int_{D_j}\hspace{-0.2cm}\omega^2\int_{0}^1\hspace{-0.2cm}\nabla_y\bigg(\alpha(z_j+t(y-z_j))G^{\omega,\infty}_{\mathtt{s}}(\hat{x},z_j+t(y-z_j))\bigg)_k\cdot (y-z_j)dt\cdot U^t(y,\theta)dy, k=1,2,3
  \nonumber\\
     \mathtt{Ep}_{j}(k):=\int_{D_j}\hspace{-0.2cm}\omega^2\int_{0}^1\hspace{-0.2cm}\nabla_y\bigg(\alpha(z_j+t(y-z_j))G^{\omega,\infty}_{\mathtt{p}}(\hat{x},z_j+t(y-z_j))\bigg)_k\cdot (y-z_j)dt\cdot U^t(y,\theta)dy, k=1,2,3\nonumber
 \end{align}
 satisfying the estimate
 \begin{align}
  \sum_{j=1}^M\mathtt{Ep}_{j}\underset{\eqref{E-r-estimation-in-terms-of-a}}{=}O(\sum_{j=1}^M\norm{U^t}_{(L^2(D_j))^3}a^{\frac{1}{2}})\underset{\eqref{norm-u-app-mp}}{=}O(Ma^{2-h}).\label{E-p-j-est-mp}
 \end{align}
 By applying  the mixed reciprocity relation \eqref{lemma-mixed-reciprocity-elastic-p}, i.e., $G^{\omega,\infty}(\hat{x},y)\cdot U=\frac{(U\cdot\hat{x})}{4\pi(\lambda+2\mu)\beta_1}V^{t,\mathtt{p}}(y,-\hat{x})$, and using \eqref{E-p-j-est-mp} and \eqref{int-ut-app-Dj-multiple-particle-lemma-mp} in  \eqref{scat-field-estimation-withEpj-term}, we obtain
 \begin{align}
     U^\infty_{\mathtt{p}}(\hat{x},\theta)&\underset{\eqref{lemma-mixed-reciprocity-elastic-p},\eqref{E-p-j-est-mp}}{=}V^\infty_{\mathtt{p}}(\hat{x},\theta)+\sum_{j=1}^M\frac{\omega^2}{4\pi(\lambda+2\mu)\beta_1}\alpha(z_j)\left(\hat{x}\cdot\int_{D_j} U^t(y,\theta)dy \right)V^{t,\mathtt{p}}(z_j,-\hat{x})+O(Ma^{2-h})
 \nonumber\\
& \underset{\eqref{int-ut-app-Dj-multiple-particle-lemma-mp}}{=}    V^\infty_{\mathtt{p}}(\hat{x},\theta)+\frac{\omega^2}{4\pi(\lambda+2\mu)\beta_1}\sum_{j=1}^M\dfrac{\alpha(z_j)}{(1-\alpha(z_j)\omega^2 \lambda_{n_0}^j)}\hat{x}\cdot\left(E_{n_0}^{D_j}\cdot  V^t(z_j,\theta)\right)V^{t,\mathtt{p}}(z_j,-\hat{x})
\nonumber\\
&\qquad+O\left(\max\{Ma, M^2a^{1-2h}\frac{a}{d} \}\right),
 \end{align}
and hence for $\mathtt{p}$-incident wave $U_{\mathtt{p}}^i$, the pressure part of the farfield satisfies the following asymptotic expansion, uniformly in all the directions of $\hat{x},\theta\in\mathbb{S}^2$,
\begin{align}
      U^\infty_{\mathtt{p}}(\hat{x},\theta)&   =  V^\infty_{\mathtt{p}}(\hat{x},\theta)+\frac{\omega^2}{4\pi(\lambda+2\mu)\beta_1}\sum_{j=1}^M\dfrac{\alpha(z_j)}{(1-\alpha(z_j)\omega^2 \lambda_{n_0}^j)}\hat{x}\cdot\left(E_{n_0}^{D_j}\cdot  V^{t,\mathtt{p}}(z_j,\theta)\right)V^{t,\mathtt{p}}(z_j,-\hat{x}) \nonumber\\
&\qquad+O\left(\max\{Ma, M^2a^{1-2h}\frac{a}{d} \right).\label{j-inclusions-farfield}
\end{align}
The above asymptotic expansion is nothing but the desired expansion \eqref{theorem-p-part-farfield-mp}. In a similar manner, by appropriately making use of mixed reciprocity relation \eqref{lemma-mixed-reciprocity-relation-s-incident-wave}, apriori estimates \eqref{norm-u-app-mp} and \eqref{int-ut-app-Dj-multiple-particle-lemma-mp} in \eqref{s-part-Ut-IE-mp}, one can derive the asymptotic expansion for the shear part of the farfield as stated in \eqref{theorem-farfield-s-part-mp}.

\section{Inverse Problem: Extraction of the  mass density}\label{section-Reconstruction-mass-density}
In this section, we justify the scheme proposed in Section \ref{section-Application} to address the inverse problem of recovering the mass density from the farfield measurements taken before and after injecting the inclusions  {one by one}. We show that reconstruction of mass density is possible by using $\mathtt{p}$ parts of the farfields corresponding to the scattered fields due to the $\mathtt{p}$-incident wave $U_{\mathtt{p}}^i$.

Firstly, by injecting the first inclusion $D_1$ to the target region, we have the estimations of elastic fields corresponding to the $\mathtt{p}$ incident wave by applying Theorem \ref{theorem} for $M=1$.  Multiplying  both sides of equation \eqref{farfield-variable-density-backscattered-field} first by matrix $E_{n_0}^B$, and then by $\hat{x}$, we get:
\begin{align}
\hat{x}\cdot \left(E_{n_0}^B\cdot U^{\infty,\mathtt{p}}_{\mathtt{p}}(\hat{x},\theta)\right) =\;&\hat{x}\cdot \left(E_{n_0}^B\cdot V^{\infty,\mathtt{p}}_{\mathtt{p}}(\hat{x},\theta)\right)+\frac{\rho_1}{4\pi(\lambda+2\mu)\beta_1}\frac{\omega^2\omega_{n_0}^2}{(\omega_{n_0}^2-\omega^2)}\bigg(\hat{x}\cdot  \left(E_{n_0}^{D_1} \cdot V^{t,\mathtt{p}}(z,\theta)\right)\bigg)
\nonumber\\
&\bigg(\hat{x}\cdot \left(E_{n_0}^B\cdot V^{t,\mathtt{p}}(z_1,-\hat{x})\right)\bigg)+O(a^{\min\{1,2-2h\}}). \label{farfield-variable-mass-density-extracion-density}
\end{align}
 By making use of the backscattered direction and using \eqref{scaling-prop-c-o-s} in \eqref{farfield-variable-mass-density-extracion-density}, we get
\begin{align}
\hat{x}\cdot \left(E_{n_0}^B\cdot U^{\infty,\mathtt{p}}_{\mathtt{p}}(\hat{x},-\hat{x})\right) =&\hat{x}\cdot \left(E_{n_0}^B\cdot V^{\infty,\mathtt{p}}_{\mathtt{p}}(\hat{x},-\hat{x})\right)+\frac{\rho_1\,a^3}{4\pi(\lambda+2\mu)\beta_1}\frac{\omega^2\omega_{n_0}^2}{(\omega_{n_0}^2-\omega^2)}\bigg(\hat{x}\cdot  \left(E_{n_0}^B \cdot V^{t,\mathtt{p}}(z_1,-\hat{x})\right)\bigg)^2
\nonumber
\\
&+O(a^{\min\{1,2-2h\}}). \label{farfield-variable-extraction-mass-density-2-step}
\end{align}
 For the frequency $\omega$ satisfying \eqref{w-wn0-choosen} and using the farfield measurements associated with the back-scattering, in the expansion \eqref{farfield-variable-extraction-mass-density-2-step}, we obtain  {$\left(\hat{x}\cdot  \left(E_{n_0}^B \cdot V^{t,\mathtt{p}}(z_1,-\hat{x})\right)\right)$ up to a sign}. Further, by using $\left(\hat{x}\cdot  \left(E_{n_0}^B \cdot V^{t,\mathtt{p}}(z_1,-\hat{x})\right)\right)$ in \eqref{farfield-variable-density-backscattered-field}, we deduce the total field  {$V^{t,\mathtt{p}}(z_1,-\hat{x})$ up to a sign}.

Next, we inject the second inclusion $D_2$ after performing the previous step, and measure the backscattered farfields corresponding to the $\mathtt{p}$-incident wave. Again, using the estimations of elastic fields stated in Theorem \ref{theorem} for the case $M=2$, we can recover {$V^{t,p}(z_2, -\hat{x})$ up to a sign}. To achieve this, we multiply both sides of equation \eqref{farfield-variable-density-backscattered-field} for $M=2$ by the matrix $E_{n_0}^B $, followed by the vector $ \hat{x} $, and use the backscatteredfarfield measurements, along with the already obtained $\left(\hat{x}\cdot \left(E_{n_0}^B \cdot V^{t,\mathtt{p}}(z_1,-\hat{x})\right)\right)$ to deduce 
{$\left(\hat{x} \cdot (E_{n_0}^B \cdot V^{t,\mathtt{p}}(z_2,-\hat{x}))\right)$
 up to a sign}. 

Further, by using $\left(\hat{x}\cdot  \left(E_{n_0}^B \cdot V^{t,\mathtt{p}}(z_j,-\hat{x})\right)\right)$, $j=1,2$  and $V^{t,\mathtt{p}}(z_1,-\hat{x})$ in \eqref{farfield-variable-mass-density-p-part-incident}, we can determine the total field {$V^{t,\mathtt{p}}(z_2,-\hat{x})$ up to a sign.}

Continuing with a similar procedure, by injecting the inclusions $D_j$ one after  another and using Theorem \ref{theorem} and farfield expansion \eqref{farfield-variable-density-backscattered-field} for $M=j$, we can determine {$\left(\hat{x} \cdot (E_{n_0}^B \cdot V^{t,\mathtt{p}}(z_j,-\hat{x}))\right)$ up to a sign and hence $V^t(z_j,-\hat{x})$ for $j=1,2,\cdots,M$}, associated to  $\mathtt{p}-incident$ wave. 

Since $\rho$ is assumed to be continuous (actually of class $C^1$) everywhere,  then $V^t$ is of class $C^2$ in $\Omega$ and hence $V^{t,\mathtt{p}}$ is of class $C^2$ in $\Omega$.

Let $\delta>0$ be a positive real number. Select all the points $z_i\in\Omega$, for $i=1,2,\cdots, {M}$, for which there exists a $ k=1,2,3,$ such that $|V_k^{t,\mathtt{p}}(z_i)|>\delta$, where $V_k^{t,\mathtt{p}}$ is the $k^{th}$ entry of $V^{t,\mathtt{p}}$.

Recall that we have reconstructed $V^{t,\mathtt{p}}(z_i)$ up to a sign. Indeed as we reconstruct $(V^{t,\mathtt{p}}(z_i))^2$, then we deduce the modulus of $V^{t,\mathtt{p}}(z_i)$ and it argument up to $\pi$. Thus we get  {$V^{t,\mathtt{p}}_k(z_i)$ up to the same sign}, for $k=1,2,3$, and then for $i=1,2,\cdots  {M}$. Let $z_1$ be the first selected point for which the above point is valid, i.e. $|V_k^{t,\mathtt{p}}(z_1)|>\delta$ for a $ k=1,2,3$. Now, consider other points $z_i$ (where $i\neq 1$), near $z_1$. By continuity, we know that all $V^{t,\mathtt{p}}(z_i)$ will share the same sign. Otherwise, $V^{t,\mathtt{p}}_k$ for the same $k=1,2,3$ would vanish at some point near $z_1$,  which contradicts the original assumption of $|V_k^{t,\mathtt{p}}(z_i)|>\delta$ for a $k=1,2,3$.

By using  numerical differentiation, we can reconstruct $\Delta^eV^{t,\mathtt{p}}(z_1)$ from $V^{t,\mathtt{p}}(z_i)$, $i=1,2,3,\cdots, M$. Finally, by using the fact that $V^{t,\mathtt{p}}$ satisfies the background equation, i.e., \eqref{background-problem-without-bubble-variable-density} corresponding to the $\mathtt{p}$-incident wave, we can extract the background mass density at $z_1$, as mentioned in Step 6 of the reconstruction Scheme.
{\begin{align}
    &\Delta^e V^{t,\mathtt{p}}+\omega^2\rho V^{t,\mathtt{p}}=0.
    \end{align}
    Thus, by taking the dot product on both sides of the above equation with $\overline{V^{t,\mathtt{p}}}(z_1)$, we get 
    \begin{align}
    & \Delta^eV^{t,\mathtt{p}}(z_1)\cdot \overline{V^{t,\mathtt{p}}}(z_1)+\rho_0(z_1)\omega^2V^{t,\mathtt{p}}(z_1)\cdot \overline{V^{t,\mathtt{p}}}(z_1)=0
    \\
    \mbox{ and then } &\rho_0(z_1)=-\frac{\Delta^eV^{t,\mathtt{p}}(z_1)\cdot \overline{V^{t,\mathtt{p}}}(z_1)}{\omega^2|V^{t,\mathtt{p}}(z_1)|^2}.
\end{align}}


\section{Proofs of Lemma \ref{lemma-properties}, Lemma \ref{lemma-apriori-multiple-inclusions-normU-int-u-abydsmall},  Lemma \ref{lemma-series-convergent-for-n-not-n0-single-paricle} and Lemma \ref{lemma-mixed-reciprocity-elastic}} \label{Technical-lemmas} 


\begin{proof}[{\textbf{Proof of  Lemma \ref{lemma-properties}:} }]\hfill
\begin{enumerate}
\item  {The family} $\{(\lambda_n^{D_1}, e_n^{D_1})\}$ is the complete orthonormal eigensystem of the operator
 $N_{D_1}^0:(L^2(D_1))^3\to (L^2(D_1))^3$ defined by $N_{D_1}^0(U)(x):=\int_{D_1}{\Gamma^0_z(x,y)\cdot U(y)\,dy}$. For any $\xi \in B$, define  $\widetilde{e_n^{D_1}}{(\xi)}$ as  $\widetilde{e_n^{D_1}}{(\xi)}:=e
_n^{D}(z_1+{a}\xi)$ and $e_n^{B}(\xi):=\dfrac{\widetilde{e_n^{D_1}}{(\xi)}}{\norm{\widetilde{e_n^{D_1}}}}_{(L^2(B))^3 }$. We prove this result in two steps.

\begin{itemize}
\item Firstly, due to the observation that
\begin{align}
\norm{\widetilde{e_n^{D_1}}}_{(L^2(B))^3}^2
\hspace{-0.1cm}=\hspace{-0.2cm}\int_{B}{\hspace{-0.05cm}\sum_{i=1}^3|(\widetilde{e_n^{D_1}}(\xi))_i|^2\,d\xi}
=\hspace{-0.2cm}\int_{B}{\hspace{-0.05cm}\sum_{i=1}^3|(e_n^{D_1}(z_1+{a}\xi))_i|^2\,d\xi}
\hspace{-0.1cm}=\hspace{-0.1cm} \frac{1}{{a}^3}\int_{D_1}\hspace{-0.1cm}{\sum_{i=1}^3|(e_n^{D_1}(x))_i|^2\,dx}
\mbox { as }\norm{e_n^{D_1}}_{(L^2(D_1))^3}=1,
\label{tilde-norm-end-scaling}
\end{align}
we get the scaling properties of the eigen functions as 
\begin{eqnarray}
e_n^{B}(\xi):=a^{\frac{3}{2}}{\widetilde{e_n^{D_1}}{(\xi)}}=a^{\frac{3}{2}}{{e_n^{D_1}}{(z_1+{a}\xi)}}
\label{enB-in-terms-of-enD} 
\end{eqnarray}
\begin{eqnarray*}
\mbox{ and }\int_{D_1}{e_n^{D_1}(x)dx}\hspace{-0.2cm}&=&\hspace{-0.2cm}\int_{B}{e_n^{D_1}(z_1+{a}\xi){a}^3\, d\xi}
\,=\, {a}^3\int_{B}{\widetilde{e_n^{D_1}}{(\xi)}\,d\xi} 
\,=\, {a}^3\int_{B}e_n^{B}(\xi)\norm{\widetilde{e_n^{D_1}}}_{(L^2(B))^3}\,d\xi={{a}^{\frac{3}{2}}\int_{B}{e_n^{B}(\xi)\,d\xi} }.
\label{step-in-end-scaling} 
\end{eqnarray*}
\item Secondly, to get the scaling property of the eigenvalues, making use that $e_n^{D_1}$ is an eigen function of $N_{D_1}^0$ corresponding to the eigenvalue $\lambda_n^{D_1}$, we observe 
\begin{align*}
\lambda_n^{D_1} e_n^{D_1}(x)&=N_{D_1}^0( e_n^{D_1})(x)\,\underset{\eqref{def-Newton-operator}, \eqref{entrywise-FM-zerof} }{=}\,\int_{D_1}{ \left[\left(\dfrac{-\gamma_1}{4\pi}\dfrac{\delta_{kl}}{|x-y|}-\gamma_2\dfrac{(x_k-y_k)(x_l-y_l)}{4\pi|x-y|^3}\right)\right]_{k,l=1,2,3}\hspace{-1.2cm}\cdot e_n^{D_1}(y)\,dy}
 \nonumber\\
&=\int_{B}{ \left[\left(\dfrac{-\gamma_1}{4\pi}\dfrac{\delta_{kl}}{|x-z-{a}\eta|}-\gamma_2\dfrac{(x_k-z_k-{a} \eta_k)(x_l-z_l-{a}\eta_l)}{4\pi|x-z-{a} \eta|^3}\right)\right]_{k,l=1,2,3}\hspace{-1cm}\cdot e_n^{D_1}(z_1+{a}\eta){a}^3 \,d\eta}
\nonumber\\
&={a}^3 \int_{B}{ \left[\left(\dfrac{-\gamma_1}{4\pi}\dfrac{\delta_{kl}}{{a} |\frac{x-z}{{a}}-\eta|}-\gamma_2\dfrac{{a}(\frac{x_k-z_k}{{a}}- \eta_k){a}(\frac{x_l-z_l}{{a}}-\eta_l)}{4\pi{a}^3|\frac{x-z}{{a}}- \eta|^3}\right)\right]_{k,l=1,2,3}\hspace{-1cm}\cdot \widetilde{e_n^{D_1}}{(\eta)}\,d\eta}
\nonumber\\
&\underset{\eqref{entrywise-FM-zerof}}{=} {a}^2\int_{B}{ \Gamma_z^0(\xi,\eta)\cdot \widetilde{e_n^{D_1}}(\eta)\,d\eta}
\qquad[\because x\in D(=z+{a}B),\,\exists \,\xi\in B  ,\,\mbox{ s.\,t}, \,{x}=z+a\xi] 
\nonumber\\
&\underset{\underset{\eqref{enB-in-terms-of-enD}}{\eqref{def-Newton-operator} }}{=}{a}^2a^{-\frac{3}{2}}(N^0_{B}){e_n^{B}}{(\xi)}
\nonumber\\
&={a}^2a^{-\frac{3}{2}}\lambda_n^{B}{e_n^{B}}{(\xi)}
\nonumber\\
&\underset{\eqref{def-Newton-operator}}{=}{a}^2\lambda_n^{B}e_n^{D_1}(x)
\nonumber\\
\mbox{and hence }\lambda_n^{D_1}&={a}^2\lambda_n^{B},
\end{align*}
\end{itemize}
\medskip
\item (i)\;\, Making use of the properties of chosen frequency $\omega$ 
from (\ref{w-wn0-choosen}) and the scaling properties of the eigenvalues of $N^0_{D_1}$  from \eqref{scaling-prop-c-o-s},  $|1-\alpha(z_1)
\omega^2\lambda_n^{D_1}|$ for $n\neq n_0$ can be estimated;
\begin{eqnarray}
|1-\alpha(z_1)
\omega^2\lambda_n^{D_1}|
&=& |1-\alpha(z_1)
\omega^2\lambda_{n_0}^{D_1}-\alpha(z_1)
\omega^2(\lambda_n^{D_1}-\lambda_{n_0}^{D_1})|
\nonumber\\
&=&|\alpha(z_1)
\omega^2(\lambda_n^{D_1}-\lambda_{n_0}^{D_1})
)\left(1-\frac{1-\alpha(z_1)
\omega^2\lambda_{n_0}^{D_1})}{\alpha(z_1)
\omega^2(\lambda_n^{D_1}-\lambda_{n_0}^{D_1})
}\right)|
\nonumber\\
&\underset{\underset{\eqref{scaling-prop-c-o-s}}{\eqref{def-of-rho}}}{\geq}&|C_4|\,|{\lambda}_n^{B}-{\lambda}_{n_0}^{B}
|\left[1- \frac{{b\,}a^h}{|C_4|\, |{\lambda}_n^{B}-{\lambda}_{n_0}^{B}|
}\right]
\nonumber\\
&>&|C_4||({\lambda}_{n_0}^{B}-{\lambda}_{\tilde{n_0}}^{B})
|\left[1- \frac{{b\,}a^h}{|C_1|\, |{\lambda}_{n_0}^{B}-{\lambda}_{\tilde{n_0}}^{B}|
}\right]
\nonumber\\
&>&|C_4|\,|{\lambda}_{n_0}^{B}-{\lambda}_{\tilde{n_0}}^{B}|-{b\,}a^h, \nonumber
\end{eqnarray} 
 where  $C_4:=(c_1-\rho_0(z_1)a^{2})\omega^2\simeq O(1)$ and $\lambda_{\tilde{n}_0}^{D (B)}$, for some $\tilde{n}_0$, is an eigen value of $N^0_{D(B)}$ satisfying $|\lambda_n^{D (B)}-\lambda_{n_0}^{D (B)}|>|\lambda_{\tilde{n}_0}^{D (B)}-\lambda_{n_0}^{D (B)}| $,  $\forall n\neq n_0$, which is possible as $N^0_D$ is a compact self-adjoint operator.

 Hence, the sequence $\{|1-\alpha(z_1)
\omega^2\lambda_n^{D_1}|^2\}_{n(\neq n_0)}$  is bounded below by a positive quantity which leads to the existence and positiveness of the $\sigma:=\inf_{n\neq n_0}|1-\alpha(z_1)
\omega^2\lambda_n^{D_1}|^2$.
\medskip

\item [(ii)]
{
Since $N_{D_1}^0:(L^2(D_1))^3 \to (L^2(D_1))^3$ is a compact self-adjoint operator, the only singular values of it are its eigenvalues and zero as their unique accumulation point. Hence, to prove $(I-(\rho_{1}-\rho_{0}(z_1))\omega^2N_{D_1}^0)$ is invertible,  it is sufficient to prove that $1\slash[{(\rho_{1}-\rho_{0}(z_1))\omega^2}]$ is not an eigenvalue of $N^0_D$ $\left( \mbox{as }  {1}/{[(\rho_{1}-\rho_{0}(z_1))\omega^2]}\neq 0\right)$. 
\medskip\\ 
 First, one can observe clearly from \eqref{w-wn0-choosen} that $\dfrac{1}{(\rho_{1}-\rho_{0}(z_1))\omega^2}$ is different from $\lambda_{n_0}^{D_1}$. 
 Indeed, making use of \eqref{w-wn0-choosen}, $\dfrac{1}{(\rho_1-\rho_0(z_1))\omega^2}$ can be rewritten as $\lambda_{n_0}^{D_1}\mp \dfrac{b\,a^h}{\rho_1\omega^2}+O(a^4)$.
\medskip \\ 
  It remains to show that $\dfrac{1}{(\rho_{1}-\rho_{0}(z_1))\omega^2}$ is different from $\lambda_{n}^{D_1}$ for all $n(\neq n_0)$. We can achieve this by rewriting $\lambda_{n}^{D_1}$ for $n\neq {n_0}$ as 
$$\lambda_n^{D_1}=\lambda_{n}^{D_1}-\lambda_{n_0}^{D_1}+\lambda_{n_0}^{D_1}\underset{\eqref{w-wn0-choosen}}{=}(\lambda_{n}^{D_1}-\lambda_{n_0}^{D_1})+\dfrac{1}{(\rho_{1}-\rho_{0}(z_1))\omega^2}\pm \dfrac{b\,a^h}{\rho_1\omega^2}$$
and then using the observation that 
\begin{eqnarray}
(\lambda_{n}^{D_1}-\lambda_{n_0}^{D_1})\pm \dfrac{b\,a^h}{\rho_1\omega^2}\underset{\underset{Lemma \,\ref{lemma-properties}}{\eqref{def-of-rho}}}
{=} {a}^2[({\lambda_n^{B}}-{\lambda_{n_0}^{B}})\pm {b \,}c_1^{-1}\omega^{-2}{a}^{h}]\neq 0,
\end{eqnarray}
considering ${a}\ll 1$ and $({\lambda_n^{B}}-{\lambda_{n_0}^{B}})$ is independent of ${a}$.
}
\end{enumerate}
\end{proof}

\begin{proof}[{\textbf{Proof of  Lemma \ref{lemma-apriori-multiple-inclusions-normU-int-u-abydsmall}:}}]
 We prove this result mainly using the Lippmann Schwinger equation \eqref{Lippmann-Multipleinclusions-Variable-density-mp} and the Parseval's identity and the scaling of the operators. First, we prove the estimate \eqref{norm-u-app-mp} and then the estimate \eqref{int-ut-app-Dj-multiple-particle-lemma-mp}.
\begin{enumerate} 
\item   \textbf{ Estimation of $\mathbf{\sum_{j=1}^M\norm{U^t}^2_{(L^2(D_j))^3}}$}
\\
For $x\in D_j$, we consider the Lippmann Schwinger equation \eqref{Lippmann-Multipleinclusions-Variable-density-mp} and rewrite it by introducing the operators $T_{D_j}^\omega:(L^2(D_j))^3\to (L^2(D_j))^3$ and the elastic Navier operator $N_{D_j}^0:(L^2(D_j))^3\to (L^2(D_j))^3$, as follows\footnote{Here, $N_{D_j}^0$ and $T_{D_j}^\omega$ are same as the operator defined  in (\ref{def-T-omega-T-z-0}-\ref{def-Newton-operator}) but associated to the inclusion $D_j$ instead of $D_1$.}:
\begin{align}
 U^t(x)-\alpha(z_j)\omega^2 N^0_{D_j}(U^t)(x)
&=V^t(x)+\omega^2 (T^\omega_{D_j}-\alpha(z_j)N^0_{D_j})(U^t)(x)+\hspace{-0.1cm}\sum_{\substack{m=1 \\ m\neq j}}^M\int_{D_m}\hspace{-0.34cm}\alpha(y)\,\omega^2G^\omega(x,y)\cdot U^t(y)dy.\label{lippman-schwinger-equation-sending -multiple-inclusions}
\end{align}
Making use of the complete orthonormal eigensystem $(\lambda_n^j, {e^j_{n,l})_{n\in\mathbb{N},\; l=1,\cdots l_{\lambda_{n_0}^j}}}$ of the compact, self-adjoint operator $N_{D_j}^0$, the Parseval's identity and the positivity of $\sigma_j :=\underset{n(\neq n_0)}{\inf} \{|1-\alpha(z_j)\omega^2\lambda_n^{j}|^2\}$,  by following the procedure as it was done in Proposition \ref{lemma-apriori-estimates} to derive the estimate \eqref{norm-ut-parseval}, we obtain
\begin{align}
\norm{U^t}_{(L^2(D_j))^3}\leq \;\;&\left(1+\frac{|1-\alpha(z_j)\omega^2\lambda_{n_0}^{j}|^2}{\sigma_j}\right)^{\frac{1}{2} } \dfrac{1}{|1-\alpha(z_j)\omega^2\lambda_{n_0}^{j}|} \norm{U^t-\alpha(z_j)\omega^2N^0_{D_j}(U^t)}_{(L^2(D_j))^3}.\label{norm_u_term-multiple-particle} 
\end{align}
Hence, to estimate $\norm{U^t}_{(L^2(D_j))^3}$, the estimate  of $\norm{U^t-\alpha(z_j)\omega^2N^0_{D_j}(U^t)}_{(L^2(D_j))^3}$ is needed and in this connection we obtain from \eqref{lippman-schwinger-equation-sending -multiple-inclusions} that
\begin{align}
\norm{(I-\alpha(z_j)\,\omega^2N^0_{D_j})U^t}_{(L^2(D_j))^3}
=\norm{V^t}_{(L^2(D_j))^3}+O\left(a\norm{U^t}_{(L^2(D_1))^3}\right)+O\left(\hspace{-0.1cm}(M-1)^{\frac{1}{2}}\frac{a}{d}  \bigg(\sum_{\substack{m=1 \\ m\neq j}}^M \norm{U^t}^2_{L^2(D_m)^3}\bigg)^{\frac{1}{2}}\right).
\label{norm-i-N-0-u-t-estimation-with}
\end{align}
For,  the normed behavior of the last two terms of \eqref{lippman-schwinger-equation-sending -multiple-inclusions} can be observed by using \eqref{Nw-N0_approximation-variable-rho}, estimates of $\Gamma^\omega(z_m,z_j)$, $j\neq m$ (see \cite[Lemma 5.2]{challa-divya-sini-2024} for instance), and the Cauchy-Schwarz Inequality($CSI$) as follows:

\begin{align}
    \setcounter{mysubequations}{0}
 \mysubnumber &\;\,\norm{(T^\omega_{D_j}-\alpha(z_j)N^0_{D_j})U^t}_{(L^2(D_j))^3}^2\underset{\eqref{Nw-N0_approximation-variable-rho}}{=} O\left(a^2\norm{U^t}_{(L^2(D_1))^3}^2\right), \label{N-0-difference-T-omega-estimation}
 \\
 \mysubnumber& \sum_{\substack{m=1 \\ m\neq j}}^M \hspace{-0.05cm}\norm{\hspace{-0.1cm}\int_{D_m}\hspace{-0.48cm}\alpha(y)\,\omega^2 G^{\omega}(\cdot,\hspace{-0.05cm}y)\hspace{-0.05cm}\cdot\hspace{-0.05cm} U^t(y)dy}_{(L^2(D_j))^3}\underset{\eqref{G-omega-in-terms-of-gamma-omega}}{=} \sum_{\substack{m=1 \\ m\neq j}}^M \hspace{-0.05cm}\norm{\hspace{-0.1cm}\int_{D_m}\hspace{-0.48cm}\alpha(y)\,\omega^2 (\Gamma^{\omega}+\mathcal{H}^\omega)(\cdot,\hspace{-0.05cm}y)\hspace{-0.05cm}\cdot\hspace{-0.05cm} U^t(y)dy}_{(L^2(D_j))^3}
    \nonumber\\
 & \hspace{-0.25cm} \underset{CSI}{\leq} \hspace{-0.15cm}\sum_{\substack{m=1 \\ m\neq j}}^M \norm{U^t}_{L^2(D_m)^3}\hspace{-0.15cm}\left(\sum_{k,i=1}^3\int_{D_j}\hspace{-0.15cm}\int_{D_m}\hspace{-0.3cm}|\alpha(y)\omega^2|^2|(\Gamma^\omega+\mathcal{H}^\omega)_{ki}(x,y)|^2dydx\right)^{\frac{1}{2}}
     \hspace{-0.35cm}=\hspace{-0.1cm}O\hspace{-0.15cm}\left(\hspace{-0.15cm}a^{-2}(M-1)^{\frac{1}{2}}\frac{1}{d}a^3  \hspace{-0.15cm}\left(\sum_{\substack{m=1 \\ m\neq j}}^M \norm{U^t}^2_{L^2(D_m)^3}\hspace{-0.15cm}\right)^{\hspace{-0.15cm}\frac{1}{2}}\right).
\end{align}
Finally, using \eqref{norm-i-N-0-u-t-estimation-with} in \eqref{norm_u_term-multiple-particle}, we obtain:
{
\begin{equation}\label{norm-u-app-piecewise-density-multiple-particle}
\sum_{j=1}^M\norm{U^t}_{L^2(D_j)^3}^2\lesssim \sum_{j=1}^M\dfrac{1}{|1-\alpha(z_j)\omega^2\lambda_{{n_0}}^{j}|^2}\norm{V^t}_{(L^2(D_j)^3}^2
\end{equation}}
which is valid for {$(M-1)\frac{a}{d}a^{-h}<1$} and is nothing but our desired estimate \eqref{norm-u-app-mp}.
\medskip

\item \textbf{Estimation of $\mathbf{\int_{D_j}U^t(y)dy}$}

For $x\in D_j$, consider the modified Lippmann Schwinger equation \eqref{lippman-schwinger-equation-sending -multiple-inclusions}, and apply the Taylor series expansion of incident field $U^i$ about $z_j$, to obtain
\begin{align}
(I-\alpha(z_j)\omega^2N^0_{D_j})U^t(x)=&V^t(z_j)+\int_{0}^{1}{\nabla_x V^t(z_j+t(x-z_j))\cdot (x-z_j)dt}+\omega^2 (T^\omega_{D_j}-\alpha(z_j)N^0_{D_j})(U^t)(x)\nonumber\\
&+\sum_{\substack{m=1 \\ m\neq j}}^M\int_{D_m}\hspace{-0.2cm}\alpha(y)\,\omega^2G^\omega(x,y)\cdot U^t(y)dy.\label{find-int-ut-before-with-W-mp}
\end{align}
Set $W^j_k$ as 
$ W_k^j:=(I-\alpha(z_j)\omega^2N^0_{D_j})^{-1}\mathtt{e_k}$
and $W^j:=[W_1^j\; W_2^j\; W_3^j]^\top$ 
 with $\mathtt{e_k}$, $k=1,2,3,$ denoting the standard unit vectors in $\mathbb{R}^3$.
 Taking the dot product with $W^j$ from the left in \eqref{find-int-ut-before-with-W-mp} and integrating over $D_j$, and making use of the self adjoint property of $I-\alpha(z_j)\omega^2N^0_{D_j}$, we get
\begin{align}
\int_{D_j}{U^t(x)dx}=\int_{D_j}{ W^j dx}\cdot V^t(z_j)+\mathcal{L}_1^j + \mathcal{L}_2^j+\mathcal{L}_3^j,\;\, j=1,2,\cdots,M
\label{int-ut-app-mp}
\end{align}
where,
\begin{equation}\label{def-I1to3}
\left.\begin{array}{ccc}
\mathcal{L}_1^j&:=&\int_{D_j}{W^j\cdot \left(\int_{0}^{1}{\nabla_x V^t(z_j+t(x-z_j))\cdot (x-z_j) dt}\right)  dx}
\\
&&
\\
\mathcal{L}_2^j&:=&\int_{D_j}W^j\cdot \left(\int_{D_j}{\omega^2(\alpha(y)G^{\omega}(x,y)-\alpha(z_j)\Gamma^0(x,y))\cdot U^t(y)dy}\right) dx
\\
&&
\\
\mathcal{L}_3^j&:=&\int_{D_j}W^j\cdot \left(\sum_{\substack{m=1 \\ m\neq j}}^M\int_{D_m}\alpha(y)\,\omega^2G^{\omega}(x,y)\cdot U^t(y)dy\right) dx
\end{array}
\right\}.
\end{equation}

In order to estimate the terms $\mathcal{L}_1^j, \;\mathcal{L}_2^j$ and $\mathcal{L}_3^j$ in \eqref{int-ut-app-mp}, we make use of similar steps as done in our previous work \cite[Proposition 2.2]{challa-divya-sini-2024} and rightly use Lemma \ref{lemma-series-convergent-for-n-not-n0-single-paricle} to derive the following identities:\footnote{We use $\langle\,; \rangle_j$ to represent the $(L^2(D_j))^3$ inner product}
\begin{eqnarray}
\int_{D_j}{
\hspace{-0.1cm}W_k^j dx}=\dfrac{1}{(1-\alpha(z_j)\omega^2 \lambda_{n_0}^j)}{\sum_{l=1}^{l_{\lambda_{n_0}^j}}\langle{\mathtt{e_k}\,;\,e_{n_0,l}^j}\rangle_j\;\int_{D_j}{e_{n_0,l}^j(x)dx}}+O( a^3), \quad k=1,2,3,\label{int-W-k-j-app-1}\\
\int_{D_j}{W_k^j dx}\,=\,O\bigg(a^{3-h}\bigg)\;\, \mbox{ and }\;\,\norm{W_k^j}_{(L^2({D_j}))^3}=O( a^{\frac{3}{2}-h_j}),\quad k=1,2,3. \label{int-W1-and-norm-w-app-mp}
\end{eqnarray}
\\

Using \eqref{int-W1-and-norm-w-app-mp}, \eqref{norm-u-app-mp} and \eqref{G-omega-in-terms-of-gamma-omega}, we approximate $\mathcal{L}_1^j$, $\mathcal{L}_2^j$ and $\mathcal{L}_3^j$ defined in \eqref{int-ut-app-mp} as follows:
\setcounter{mysubequations}{0} 
\begin{align}
 \mysubnumber \quad|\mathcal{L}_1^j|^2
\underset{CSI}{\leq} \quad\;\;&\hspace{-0.4cm}\int_{D_j}\sum_{l=1}^3|\int_{0}^{1}{\hspace{-0.2cm} \nabla V_l^t(z_j+t(x-z_j))\cdot (x-z_j)|dt}|^2dx\,\sum_{k=1}^3\norm{W_k^j}_{(L^2(D_j))^3}^2
\underset{\eqref{int-W1-and-norm-w-app-mp}}{=} O(a^{8-2h})\label{I_1-app-mp}   
\end{align}
\begin{align}
 \mysubnumber \;\;|\mathcal{L}_2^j|^2
\hspace{-0.1cm}\underset{CSI,\eqref{N-0-difference-T-omega-estimation}}{\leq}& O(a^2\norm{U^t}_{(L^2(D_j))^3}^2\sum_{k=1}^3\norm{W_k^j}^2_{(L^2(D_j))^3})\underset{\eqref{int-W1-and-norm-w-app-mp},\eqref{norm-u-app-mp}}{=}O(Ma^{8-4h})\qquad\qquad\qquad\qquad\;
\label{I-2-app-mp}
\end{align}
\mysubnumber \, To approximate $\mathcal{L}_3^j$, 
 first apply Taylor series expansion for $\alpha(y)G^\omega(x,y)$ about $x\in D_j$ near $z_j$ and again about $y\in D_m, m\neq j$, near $z_m$, in the definition \eqref{def-I1to3} of $\mathcal{L}_3^j$, to get
\begin{align}\label{L-3-j-def}
 \;\;\;\mathcal{L}_3^j=\sum_{\substack{m=1 \\ m\neq j}}^M\int_{D_j}W^jdx\cdot \left(\alpha(z_m)G^\omega(z_j,z_m)\cdot\int_{D_m}\omega^2U^t(y)dy\right)
+\mathcal{Q}_1^j+\mathcal{Q}_2^j\qquad\qquad\qquad\qquad\quad\quad
\end{align}
where
\begin{align}
   & \mathcal{Q}_1^j(k):=\sum_{l=1}^3\int_{D_j}\hspace{-0.2cm}W^j_{kl}\cdot \underset{\substack{m=1 \\ m\neq j}}{\sum^M}\int_{D_m}\hspace{-0.4cm}\omega^2\int\limits_{0}^1\nabla_y(\alpha(z_m+t(y-z_m)) G^\omega_l(z_j,z_m+t(y-z_m)))\cdot (y-z_m)dt\cdot U^t(y)dy dx
    \nonumber\\
&\mathcal{Q}_2^j(k):=\sum_{l=1}^3\int_{D_j}\hspace{-0.2cm}W^j_{kl}\cdot\underset{\substack{m=1 \\ m\neq j}}{\sum^M}  \int_{D_m}\hspace{-0.4cm}\omega^2\int\limits_{0}^1\nabla_x(\alpha(y) G^\omega_l(z_j+t(x-z_j),\,y))\cdot (x-z_j)dt\cdot U^t(y)dy\, dx,\;\, k=1,2,3.
    \nonumber
\end{align}
Also we can observe that $\mathcal{Q}_1^j$ and $\mathcal{Q}_2^j$ behaves as 
\begin{eqnarray}\label{Q-1-2-j-estimation}
\mathcal{Q}_1^j=O\left(Ma^{6-2h}\frac{a}{d}+Ma^{3-2h}\frac{a^2}{d^2}\right) 
& \mbox{ and }& \mathcal{Q}_2^j=O(Ma^{3-2h}\frac{a^2}{d^2}).
\end{eqnarray}

For, making use of  \eqref{int-W1-and-norm-w-app-mp}, \eqref{A-j-estimation-mp} and the $CSI$ appropriately, we can estimate $\mathcal{Q}_1^j$ using following computations
\begin{align}
&|\mathcal{Q}_1^j|^2=\sum_{k=1}^3|\sum_{l=1}^3\int_{D_j}\hspace{-0.2cm}W^j_{kl}\cdot \underset{\substack{m=1 \\ m\neq j}}{\sum^M}\int_{D_m}\hspace{-0.4cm}\omega^2\int\limits_{0}^1\nabla_y(\alpha(z_m+t(y-z_m)) G^\omega_l(z_j,z_m+t(y-z_m)))\cdot (y-z_m)dt\cdot U^t(y)dy dx|^2
\nonumber\\
&\leq \sum_{k=1}^3\norm{W_k^j}_{(L^2(D_j))^3}\sum_{l=1}^3\int_{D_j}|\underset{\substack{m=1 \\ m\neq j}}{\sum^M}\int_{D_m}\hspace{-0.4cm}\omega^2\int\limits_{0}^1\nabla_y(\alpha(z_m+t(y-z_m)) G^\omega_l(z_j,z_m+t(y-z_m)))\cdot (y-z_m)dt\cdot U^t(y)dy |^2dx
\nonumber\\
&\leq \sum_{k=1}^3\norm{W_k^j}_{(L^2(D_j))^3}\hspace{-0.1cm}\sum_{l=1}^3\sum_{i=1}^3\underset{\substack{m=1 \\ m\neq j}}{\sum^M}\hspace{-0.1cm}\int_{D_j}\int_{D_m}\hspace{-0.4cm}|\omega^2|^2\bigg(\int\limits_{0}^1\hspace{-0.1cm}|(\nabla_y(\alpha(z_m+t(y-z_m)) G^\omega_l(z_j,z_m+t(y-z_m))))_i|| (y-z_m)|dt\bigg)^2\hspace{-0.1cm} dy dx
\nonumber\\
&\qquad \underset{\substack{m=1 \\ m\neq j}}{\sum^M}\norm{U^t(y)}_{(L^2(D_m))^3}^2
\nonumber\\
&=O\left(\sum_{k=1}^3\norm{W_k^j}^2_{(L^2(D_j))^3}\sum_{\substack{m=1 \\ m\neq j}}^{M}\norm{U^t}^2_{(L^2(D_m))^3}a^6(M-1)\left(\frac{a^2}{d^2}+\frac{a^{-4}}{d^4}a^2\right)\right)
\underset{\eqref{int-W1-and-norm-w-app-mp}, \eqref{norm-u-app-mp}}{=}O\left(M^2a^{12-4h}\left(\frac{a^2}{d^2}+\frac{a^{-2}}{d^{4}}\right)\right).
\nonumber
\end{align}
Similarly, $\mathcal{Q}_2^j$ behavior  can be observed using \eqref{int-W1-and-norm-w-app-mp}, \eqref{norm-u-app-mp} and \eqref{G-omega-in-terms-of-gamma-omega}. 
Thus, by using \eqref{Q-1-2-j-estimation}  in \eqref{L-3-j-def}, we have the estimate for 
\begin{align}
    \mathcal{L}_3^j=\sum_{\substack{m=1 \\ m\neq j}}^M\int_{D_j}W^jdx\cdot \left(\alpha(z_m)G^\omega(z_j,z_m)\cdot\int_{D_m}\omega^2U^t(y)dy\right)+O\left(\max\{Ma^{6-2h}\frac{a}{d},Ma^{3-2h}\frac{a^2}{d^2}\}\right)\label{I3-app-mp}.
\end{align}
Substituting \eqref{I_1-app-mp}, \eqref{I-2-app-mp} and \eqref{I3-app-mp} in \eqref{int-ut-app-mp} we get the desired result \eqref{int-ut-app-Dj-multiple-particle-lemma-mp}, i.e.,
\begin{align}
\int_{D_j}{\hspace{-0.4cm}U^t(x)dx}\hspace{-0.05cm}=\hspace{-0.15cm}& 
\int_{D_j}\hspace{-0.4cm}W^jdx\cdot \hspace{-0.1cm}\left[ V^t(z_j)+\hspace{-0.1cm}\sum_{\substack{m=1 \\ m\neq j}}^M\hspace{-0.2cm}\omega^2\alpha(z_m)G^\omega(z_j,z_m)\cdot\hspace{-0.2cm}\int_{D_m}\hspace{-0.45cm}U^t(y)dy\right]
\hspace{-0.15cm}+\hspace{-0.1cm}O\left(\hspace{-0.1cm}\max\{a^{4-h},M^{\frac{1}{2}}a^{4-2h},Ma^{6-2h}\frac{a}{d},Ma^{3-2h}\frac{a^{2}}{d^2}\}\hspace{-0.1cm}\right)
\nonumber\\
\underset{\eqref{int-W-k-j-app-1}}{=}&\dfrac{1}{(1-\alpha(z_j)\omega^2 \lambda_{n_0}^j)}E_{n_0}^{D_j}\cdot V^t(z_j)+ \dfrac{1}{(1-\alpha(z_j)\omega^2 \lambda_{n_0}^j)}E_{n_0}^{D_j}{\sum_{\substack{m=1 \\ m\neq j}}^M \omega^2\alpha(z_m)G^\omega(z_j,z_m)\cdot\int_{D_m}\hspace{-0.3cm}U^t(y)dy}
\nonumber\\
&+O(a^3)+O\left({Ma^{3-h}\frac{a}{d}}\right)+O\left(\max\{a^{4-h},M^{\frac{1}{2}}a^{4-2h},Ma^{6-2h}\frac{a}{d}, Ma^{3-2h}\frac{a^2}{d^2}\}\right)\label{int-ut-algebraic-system-mp}
\end{align}
 Observing that the system \eqref{int-ut-algebraic-system-mp} is invertible whenever $(M-1)\frac{a}{d}<a^{h}$
 and the second term of the approximation \eqref{int-ut-algebraic-system-mp} behaves as ${O(Ma^{3-2h}\frac{a}{d}})$, we will get the desired estimate \eqref{int-ut-app-Dj-multiple-particle-lemma-mp}.
\end{enumerate}
\end{proof}

\begin{lemma}\label{lemma-series-convergent-for-n-not-n0-single-paricle}
  {The series}$\underset{n\neq {n_0}}{\sum}\dfrac{1}{(1-(\rho_{1}-\rho_{0})\omega^2 \lambda_n^{D_1})}\left(\mathtt{e_k}\cdot\int_{D_1}{e_n^{D_1}(x)\, dx}\right)\int_{D_1}{e_n^{D_1}(x)\,dx}\cdot \mathtt{e_m}\;$  for  $\;m=1,2,3$ is convergent and behaves as $O( a^3)$. 
\end{lemma}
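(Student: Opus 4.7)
The strategy is to factor out the $a$-dependence using the scaling $\int_D e_n^D(x)\,dx=a^{3/2}\int_B e_n^B(\eta)\,d\eta$ from Lemma \ref{lemma-properties}(\ref{lemma-enD-scaling-lambda-scaling-to-B}), control the denominators uniformly in $n\neq n_0$ via the positive lower bound $\sigma$ from Lemma \ref{lemma-properties}(\ref{infimum-exist-sigma-greater-than-0}), and then reduce the tail sum to a Parseval-type identity applied to the constant vectors $\mathtt{e_k},\mathtt{e_m}\in (L^2(D))^3$.

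First, by the scaling property, the series equals
\begin{equation*}
a^3\sum_{n\neq n_0}\frac{1}{1-(\rho_1-\rho_0(z))\omega^2\lambda_n^{D}}\left(\mathtt{e_k}\cdot \int_B e_n^B(\eta)\,d\eta\right)\left(\int_B e_n^B(\eta)\,d\eta\cdot \mathtt{e_m}\right).
\end{equation*}
Hence it suffices to show that the remaining sum is bounded by a constant independent of $a$. By Lemma \ref{lemma-properties}(\ref{infimum-exist-sigma-greater-than-0}), we have $|1-(\rho_1-\rho_0(z))\omega^2\lambda_n^{D}|\geq \sqrt{\sigma}>0$ for every $n\neq n_0$, with $\sigma$ uniformly bounded below in $a$ (for $a$ sufficiently small, under the frequency condition \eqref{w-wn0-choosen}). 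Thus I can pull the factor $\sigma^{-1/2}$ out of the sum.

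Next, I would apply the Cauchy--Schwarz inequality to the tail in $\ell^2$:
\begin{equation*}
\Bigl|\sum_{n\neq n_0}\bigl(\mathtt{e_k}\cdot \tfrac{}{}{\textstyle\int_B e_n^B}\bigr)\bigl({\textstyle\int_B e_n^B}\cdot \mathtt{e_m}\bigr)\Bigr|\leq \Bigl(\sum_{n}|\mathtt{e_k}\cdot {\textstyle\int_B e_n^B}|^2\Bigr)^{1/2}\Bigl(\sum_{n}|{\textstyle\int_B e_n^B}\cdot \mathtt{e_m}|^2\Bigr)^{1/2}.
\end{equation*}
Now I use Parseval's identity in $(L^2(D))^3$ for the complete orthonormal system $\{e_n^D\}$ applied to the constant vector field $\mathtt{e_k}\in (L^2(D))^3$: combining $\langle \mathtt{e_k},e_n^D\rangle_{(L^2(D))^3}=\mathtt{e_k}\cdot \int_D e_n^D=a^{3/2}\,\mathtt{e_k}\cdot \int_B e_n^B$ with $\|\mathtt{e_k}\|_{(L^2(D))^3}^2=|D|=a^3|B|$ yields the $a$-free identity $\sum_n |\mathtt{e_k}\cdot \int_B e_n^B|^2=|B|$, and similarly for $\mathtt{e_m}$.

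Putting these three ingredients together, the sum is bounded in modulus by $a^3\,\sigma^{-1/2}\,|B|$, which is $O(a^3)$ since $\sigma^{-1/2}$ and $|B|$ are constants independent of $a$. The only mildly delicate point is to verify that $\sigma$ remains bounded away from zero uniformly in $a$ under the frequency scaling \eqref{w-wn0-choosen}, but this is precisely the content of Lemma \ref{lemma-properties}(\ref{infimum-exist-sigma-greater-than-0}) (which also takes care of absolute convergence via Cauchy--Schwarz).
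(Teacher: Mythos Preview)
Your argument is correct and in fact more streamlined than the paper's. Both proofs extract the $a^3$ factor from the scaling $\int_D e_n^D=a^{3/2}\int_B e_n^B$, pull out the uniform lower bound $\sigma^{-1/2}$ on the denominators from Lemma \ref{lemma-properties}(\ref{infimum-exist-sigma-greater-than-0}), and apply Cauchy--Schwarz in $\ell^2$. The divergence is in the final step: to show $\sum_n |\mathtt{e_k}\cdot\int_B e_n^B|^2<\infty$, the paper introduces an auxiliary resolvent $W_\tau$ with $(N_B^0-\tau I)W_\tau=I$ for a large shift $\tau>\max_n\lambda_n^B$, deduces $\langle \mathtt{e_m},e_n^B\rangle=(\lambda_n^B-\tau)\langle W_{\tau m},e_n^B\rangle$, and bounds the sum by $|\tau|^2\|W_{\tau m}\|_{(L^2(B))^3}^2$. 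You instead apply Parseval directly to the constant field $\mathtt{e_k}\in(L^2(B))^3$, obtaining the explicit value $\sum_n|\mathtt{e_k}\cdot\int_B e_n^B|^2=|B|$. Your route is shorter and avoids the resolvent machinery entirely; the paper's detour through $W_\tau$ does not purchase any additional information here, since the only thing needed is finiteness of the $\ell^2$ sum, and Parseval already gives that for free.
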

\begin{proof}  We have that $N_B^0:(L^2(B))^3\to (L^2(B))^3$ is a self-adjoint compact operator with complete orthonormal eigen system $(\lambda_n^{B},e_n^{B})$ of $N_B^0$.
Now, let $ \Lambda$ and $\tau$ be scalars such that $\Lambda=\underset{n}{\max}\{\lambda_n^{B}\}$ 
and $\tau -\Lambda \gg 1$. Also, let $W_\tau$ be a matrix satisfying $(N^0_{B}-\tau I)W_{\tau}=I$ and denote its $m^{th}$ row by ${W_\tau}_m$ and its $(m,k)^{th}$ entry by ${W_\tau}_{mk}$ for $m,k=1,2,3$ . 
Then, observe that
\begin{align}
\hspace{-2cm}\setcounter{mysubequations}{0}
 \mysubnumber\quad \int_{B}{e_n^{B}(\xi)\,d\xi}=\int_{B}I\cdot e_{n}^{B}(\xi)\,d\xi
\,=\int_{B}(N^0_{B}-\tau I)W_\tau \cdot e_{n}^{B}(\xi)\,d\xi
\,=(\lambda_n^{B}-\tau )\int_{B}W_\tau(\xi) \cdot e_{n}^{B}(\xi)\, d\xi,
\nonumber
\end{align}
which gives
\begin{eqnarray}
\langle \mathtt{e_m}, e_n^{B}\rangle_{({L}^2(B))^3}=\int_{B}{\hspace{-0.1cm}e_n^{B}(\xi)\,d\xi}\cdot \mathtt{e_m}\,=\,(\lambda_n^{B}-\tau )\int_{B}{\hspace{-0.1cm}W_\tau}_m(\xi) \cdot e_{n}^{B}\,d\xi=(\lambda_n^{B}-\tau )\,\langle{W_\tau}_m, \, e_{n}^{B}\rangle_{({L}^2(B))^3},\, m=1,2,3.\quad\label{em-inner-product-eigen-vector-n-not-n0-single-inclusion}
\end{eqnarray}
  {
\begin{align}
\hspace{-3.5cm} \mysubnumber\quad
 \norm{W_{\tau m}}^2_{(L^2(B))^3}=\sum_{n}|\langle{W_\tau}_m, e_n^{B}\rangle_{({L}^2(B))^3}|^2
\underset{\eqref{em-inner-product-eigen-vector-n-not-n0-single-inclusion}}{=}\sum_{n}|\dfrac{\langle \mathtt{e_m}, e_n^{B}\rangle_{({L}^2(B))^3}}{(\lambda_n^{B}-\tau)}|^2, \quad\, m=1,2,3.
\label{norm-W-tau-m}
\end{align}
Now by making use of the Lemma \ref{lemma-properties} we observe that
\begin{align*}
\sum_{n\neq {n_0}}\hspace{-0.05cm}|\dfrac{1}{(1-(\rho_{1}-\rho_{0}(z_1))\omega^2 \lambda_n^{D_1})}\langle \mathtt{e_k}, e_n^{D_1}\rangle_{(L^2(D_1))^3}\langle \mathtt{e_m}, e_n^{D_1}\rangle_{(L^2(D_1))^3}|
\leq & \dfrac{a^{3}}{\sigma_1}\sum_{n\neq {n_0}}\hspace{-0.05cm}|\langle \mathtt{e_k}, e_n^{B}\rangle_{(L^2(B))^3}\langle \mathtt{e_m}, e_n^{B}\rangle_{(L^2(B))^3}|
\\
\leq\dfrac{a^3}{\sigma_1}\bigg(\sum_{n\neq n_0}|&\langle \mathtt{e_k}, e_n^{B}\rangle_{(L^2(B))^3}|^2\bigg)^{1/2\hspace{-0.2cm}}\left(\sum_{n\neq n_0}|\langle \mathtt{e_m}, e_n^{B}\rangle_{(L^2(B))^3}|^2\right)^{1/2}
\nonumber\\
&\hspace{-2.2cm}\underset{\eqref{norm-W-tau-m}}{\leq}  \,\dfrac{a^3}{\sigma_1}|\tau|^2\norm{W_{\tau m}}_{(L^2(B))^3} \norm{W_{\tau k}}_{(L^2(B))^3}
=O(a^3).
\end{align*}
}
\end{proof}

\begin{lemma}\label{lemma-mixed-reciprocity-elastic}
The total field $V^t$ and the Green's matrix $G^{\omega}$ enjoy the following mixed reciprocity relations: 
 \begin{eqnarray}\label{lemma-mixed-reciprocity-elastic-p}
4\pi(\lambda+2\mu)\beta_1 G^{\omega,\infty}_\mathtt{p}(\hat{x},y)\cdot  {U}= (U\cdot \hat{x})V^{t,\mathtt{p}}(y,-\hat{x});
\end{eqnarray}
and
\begin{eqnarray}
    G^{\omega,\infty}_\mathtt{s}(\hat{x},y)\cdot  {U}=\frac{(U\cdot\hat{x}^{\perp_h})}{4\pi\mu\beta_{2_h}} V^{t,\mathtt{s}_h}(y,-\hat{x})+\frac{(U\cdot\hat{x}^{\perp_v})}{4\pi\mu\beta_{2_v}} V^{t,\mathtt{s}_v}(y,-\hat{x});\label{lemma-mixed-reciprocity-relation-s-incident-wave}
\end{eqnarray}
where $U$ is any constant vector {(w.r.t $x$ and $y$)},
$\beta_{2_h}:=\frac{\beta_2\alpha_1}{\sqrt{\alpha_1^2+\alpha_2^2}}$ and  $\beta_{2_v}:=\frac{\beta_2\alpha_2}{\sqrt{\alpha_1^2+\alpha_2^2}}$.
\end{lemma}
\begin{proof}
We prove this result in two cases: $y\in\mathbb{R}^3\setminus{\Omega}$ and $y\in\Omega$, respectively.
\begin{enumerate}
\item[]\textbf{\hspace{-.6cm}Case 1 - $y\in\mathbb{R}^3\setminus{\Omega}$ :}
First, observe that by using the total field $V^t$ satisfies \eqref{background-problem-without-bubble-variable-density}, the Green's tensor $G^\omega$ satisfies \eqref{Greens-function-variable-rho} and the fundamental matrix $\Gamma^\omega$ satisfies \eqref{fundamental-homogeneous-background-massdensity-tildrho}, we can derive
 \begin{align}
    &(i)\; V^t(y,\theta) = V^i(y,\theta) + \int_{\Omega} \omega^2(\rho_0(\zeta) - \tilde{\rho_0})\Gamma^\omega(y,\zeta) \cdot V^t(\zeta,\theta) \, d\zeta, \quad y \in \mathbb{R}^3 \setminus \bar{\Omega}; \label{L-S-equation-in-terms-Vt-elastic}\\
  \mbox{and }  &(ii)\; G^\omega(x,y) = \Gamma^\omega(x,y) + \int_{\Omega} \omega^2(\rho_0(\zeta) - \tilde{\rho_0})\Gamma^\omega(y,\zeta) G^\omega(x,\zeta) \, d\zeta, \quad x, y \in \mathbb{R}^3 \setminus \bar{\Omega}; \label{scatteredfield-elastic-model-before-step}
\end{align}
Now, we prove the result in three parts. 

\begin{itemize}
\item[(1)]  Proof of \eqref{L-S-equation-in-terms-Vt-elastic}: From \eqref{background-problem-without-bubble-variable-density}, we have
\begin{eqnarray}
\Delta^eV^t(\zeta )+\omega^2\rho_0(\zeta )V^t(\zeta )=0, \quad \zeta \in \Omega.\nonumber
\end{eqnarray}
Multiplying this equation by  $\Gamma^\omega(y,\zeta )$ for $y\in\mathbb{R}^3\setminus \bar{\Omega}$ and integrating over $\Omega$, we get
\begin{eqnarray}
\int_{\Omega}\Gamma^\omega(y,\zeta )\cdot \Delta^eV^t(\zeta )\,d\zeta +\omega^2\int_{\Omega}\rho_0(\zeta )\Gamma^\omega(y,\zeta )\cdot V^t(\zeta )\,d\zeta =0,\quad y\in\mathbb{R}^3\setminus \bar{\Omega}.\nonumber
\end{eqnarray}
Making use of Betti's third identity \cite{LOW-FREQUENCY} in the above and using \eqref{fundamental-homogeneous-background-massdensity-tildrho}, we deduce that,
\begin{eqnarray}
  \int_{\partial \Omega}\hspace{-0.23cm}[\Gamma^\omega(y,\zeta )\cdot T_\nu V^t(\zeta )-T_\nu\Gamma^\omega(y,\zeta )\cdot V^t(\zeta )]\,ds(\zeta ) 
=\hspace{-0.1cm}\int_{\Omega}\hspace{-0.1cm}\omega^2(\tilde{\rho_0}-\rho_0(\zeta ))
\Gamma^\omega(y,\zeta )\cdot V^t(\zeta )\,d\zeta, \quad y\in\mathbb{R}^3\setminus \bar{\Omega}.\quad \label{lippmann-in-terms-of -Vs-with-boundary-terms-elastic}
\end{eqnarray}
Considering $V^t=V^i+V^s$, where $V^i$ satisfies the background equation $\Delta^eV^i+\omega^2\tilde{\rho_0}V^i=0 \,\, \text{in } \mathbb{R}^3$ and   $V^s$ is the radiating solution, and making use of representation formulas (see \cite{COLTON-IE, COLTON-KRESS} for instance), we can rewrite the above equation \eqref{lippmann-in-terms-of -Vs-with-boundary-terms-elastic} for the incident direction $\theta\in\mathbb{S}^2$ as:
\begin{eqnarray}
-V^s(y,\theta)=\int_{\Omega}\omega^2(\tilde{\rho_0}-\rho_0(\zeta ))\Gamma^\omega(y,\zeta )\cdot V^t(\zeta,\theta )
\,d\zeta , 
\end{eqnarray}
which is nothing but \eqref{L-S-equation-in-terms-Vt-elastic}.
\item[(2)] Proof of \eqref{scatteredfield-elastic-model-before-step}: Since, the Green's tensor $G^\omega$ satisfies \eqref{Greens-function-variable-rho}, we have
\begin{eqnarray}
\Delta^e_{\zeta }G^\omega(\zeta ,y)+\omega^2\rho_0(\zeta )G^\omega(\zeta ,y)=-\delta(\zeta ,y)I, \quad \mbox{ on }\mathbb{R}^3.\label{Green-tensor-elastic}
\end{eqnarray}
For  $y\in \mathbb{R}^3\setminus\bar{\Omega}$, multiplying equation \eqref{Green-tensor-elastic} with $\Gamma^\omega(x,\zeta )$ for  $x\in \mathbb{R}^3\setminus \bar{\Omega}$ and integrating over $\Omega$, we get
\begin{eqnarray}
&&\int_{\Omega}\Gamma^\omega(x,\zeta )\Delta^e_{\zeta }G^\omega(\zeta ,y)\,d\zeta +\int_{\Omega}\omega^2\rho_0(\zeta )\Gamma^\omega(x,\zeta )G^\omega(\zeta ,y)\,d\zeta =0, \quad x, y \in \mathbb{R}^3\setminus\bar{\Omega},
\nonumber
\end{eqnarray}
and then by employing Betti's third identity \cite{LOW-FREQUENCY} and\eqref{fundamental-homogeneous-background-massdensity-tildrho}, we deduce 
\begin{eqnarray}
&&\int_{\Omega}\omega^2(\rho_0(\zeta )-\tilde{\rho_0})\Gamma^\omega(x,\zeta )G^\omega(\zeta ,y)\,d\zeta +\int_{\partial \Omega}\left[\Gamma^\omega(x,\zeta )T_{\nu(\zeta )}G^\omega(\zeta ,y)-T_{\nu(\zeta )}\Gamma^\omega(x,\zeta )\,G^\omega(\zeta,y)\right]\,ds(\zeta ){=}0.
\nonumber
\end{eqnarray}
This equation can be rewritten further as 
\begin{align}
&\int_{\Omega}\omega^2(\rho_0(\zeta )-\tilde{\rho_0})\Gamma^\omega(x,\zeta )G^\omega(\zeta ,y)\,d\zeta+\int_{\partial \Omega}\Big(\Gamma^\omega(x,\zeta )T_{\nu(\zeta )}\Gamma^\omega(\zeta ,y)-T_{\nu(\zeta )}\Gamma^\omega(\zeta ,y)\, \Gamma^\omega(x,\zeta )\Big)\,ds(\zeta )\nonumber\\
& +\int_{\partial \Omega}\Big(\Gamma^\omega(x,\zeta )T_{\nu(\zeta )}[G^\omega(\zeta ,y)-\Gamma^\omega(\zeta ,y)]-T_{\nu(\zeta )}\Gamma^\omega(x,\zeta )\,[G^\omega(\zeta ,y)-\Gamma^\omega(\zeta ,y)]\Big)\,ds(\zeta )=0.
\label{lippmann-with-two-boundary-terms-elastic}
\end{align}
{Observing that, for  $x\in\mathbb{R}^3\setminus\bar{\Omega}$,  employing the facts that $G^\omega-\Gamma^\omega$ is a radiating solution and $\Gamma^\omega$ is a fundamental tensor, along with the representation formulas for elastic wave propagation, 
 we have} 
\begin{align*}
 &\int_{\partial \Omega}[T_{\nu(\zeta )}\Gamma^\omega(x,\zeta )\Gamma^\omega(\zeta ,y)-\Gamma^\omega(x,\zeta )T_{\nu(\zeta )}\Gamma^\omega(\zeta ,y)]\,ds(\zeta )=0\qquad \mbox{  and  }
 \\
& \int_{\partial \Omega}[\Gamma^\omega(x,\zeta )T_{\nu(\zeta )}(G^\omega(\zeta ,y)-\Gamma^\omega(\zeta ,y))-T_{\nu(\zeta )}\Gamma^\omega(x,\zeta )(G^\omega(\zeta ,y)-\Gamma^\omega(\zeta ,y))]\,ds(\zeta )=-G^\omega(x,y)+\Gamma^\omega(x,y).
\end{align*}
Substituting the above into \eqref{lippmann-with-two-boundary-terms-elastic} gives us
\begin{eqnarray}
{G^\omega(x,y)=\Gamma^\omega(x,y)+\int_{\Omega}\omega^2(\rho_0(\zeta )-\tilde{\rho_0})\Gamma^\omega(x,\zeta )G^\omega(\zeta ,y)\,d\zeta }.\nonumber \label{lippmann-without-boundary-terms-elastic-before-symmetricity-applying}
\end{eqnarray}
Now, by making use of the symmetry of $G^\omega$ and $\Gamma^\omega$ in the above equation, we get the desired \eqref{scatteredfield-elastic-model-before-step}.
\medskip

\item[(3)] Derivation of the mixed reciprocity relations: By multiplying \eqref{scatteredfield-elastic-model-before-step} by  {$U$} ({$U$ as constant vector w.r.t x)}, we obtain
\begin{eqnarray}\label{G-Gamma-dot-U-Int-eqn}
    G^\omega(x,y)\cdot  {U}=\Gamma^{ {\omega}}(x,y)\cdot  {U}+\omega^2\int_{\Omega}(\rho_0(\zeta)-\tilde{\rho}_0)\Gamma^\omega(y,\zeta)G^\omega(\zeta,x)\cdot  {U}d\zeta\label{G-omega-Gamma-omega-related-integral-euation}
\end{eqnarray}
From the asymptotic expansion \eqref{assymptotic-expansion-Gamma-omega} of $\Gamma^\omega$, we have
\begin{align}
    \Gamma^\omega(x,y)\cdot {U}=\dfrac{1}{4\pi(\lambda+2\mu)}(\hat{x}\cdot {U})\hat{x}e^{-\mathbf{\mathtt{i}}\kappa_{\mathtt{p}}\hat{x}\cdot y}\dfrac{e^{\mathtt{\mathbf{i}}\kappa_{\mathtt{p}}|x|}}{|x|}+\dfrac{1}{4\pi\mu}({U}-({U}\cdot\hat{x})\hat{x})e^{-\mathbf{\mathtt{i}}\kappa_{\mathtt{s}}\hat{x}\cdot y}\dfrac{e^{\mathtt{\mathbf{i}}\kappa_{\mathtt{s}}|x|}}{|x|}+O(|x|^{-2}),\; |x|\to \infty,\nonumber
\end{align}
and the $\mathtt{p}$ and $\mathtt{s}$ farfields of $\Gamma^\omega$ denoted by $\Gamma^{\omega,\infty}_{\mathtt{p}}$  and $\Gamma^{\omega,\infty}_{\mathtt{s}}$ are given respectively as,  
\begin{eqnarray}\label{p,s-parts-of farfield-gamma}
{\Gamma^{\omega,\infty}_{\mathtt{p}}(\hat{x},y)\,=\, \dfrac{1}{4\pi(\lambda+2\mu)}\hat{x}\,\hat{x}^{\top}e^{-\mathbf{\mathtt{i}}\kappa_{\mathtt{p}}\hat{x}\cdot y}}\mbox{ and } {\Gamma^{\omega,\infty}_{\mathtt{s}}(\hat{x},y)\,=\,\dfrac{1}{4\pi\mu}(I-\hat{x}\,\hat{x}^{\top})e^{-\mathbf{\mathtt{i}}\kappa_{\mathtt{s}}\hat{x}\cdot y}}.
\end{eqnarray}
Similarly, making use the asymptotic expansion \eqref{assymptotic-expansion-Green} of $G^\omega$,  we have
\begin{eqnarray}
    G^\omega(x,y)\cdot  {U}=G_\mathtt{p}^{\omega,\infty}(\hat{x},y)\cdot  {U}\dfrac{e^{\mathtt{\mathbf{i}}\kappa_{\mathtt{p}}|x|}}{|x|}+G_\mathtt{s}^{\omega,\infty}(\hat{x},y)\cdot {U}\dfrac{e^{\mathtt{\mathbf{i}}\kappa_{\mathtt{s}}|x|}}{|x|}+O(|x|^{-2}),\;\, |x|\to\infty,
    \label{G-farfiled-dot-U}
\end{eqnarray}
where  the $\mathtt{p}$ and $\mathtt{s}$-parts $G_\mathtt{p}^{\omega,\infty}(\cdot,y)$ and $G_\mathtt{p}^{\omega,\infty}(\hat{x},y)$ of farfield of $G^\omega(\cdot,y)$ respectively satisfies the following, from \eqref{G-Gamma-dot-U-Int-eqn}:
\begin{eqnarray}\label{farfield-expansion-Gomega-p-part-Tnt-eqn}\label{farfield-elastic-G-Gamma-p-part}
G^{\omega,\infty}_{\mathtt{p}}(\hat{x},y)\cdot  {U}=\dfrac{1}{4\pi(\lambda+2\mu)}(\hat{x}\cdot {U})\hat{x}e^{-\mathbf{\mathtt{i}}\kappa_{\mathtt{p}}\hat{x}\cdot y}+\int_{\Omega}\omega^2 (\rho_0(\zeta)-\tilde{\rho}_0)\Gamma^\omega(y,\zeta )G^{\omega,\infty}_{\mathtt{p}}(\hat{x},\zeta )\cdot  {U}\,d\zeta,
\end{eqnarray}
and  
\begin{eqnarray}\label{farfield-expansion-Gomega-s-part-Tnt-eqn-1}
G^{\omega,\infty}_{\mathtt{s}}(\hat{x},y)\cdot  {U}=\dfrac{1}{4\pi\mu}( {U}-(\hat{x}\cdot  {U})\hat{x})e^{-\mathbf{\mathtt{i}}\kappa_{\mathtt{s}}\hat{x}\cdot y}+\int_{\Omega}\omega^2 (\rho_0(\zeta)-\tilde{\rho}_0)\Gamma^\omega(y,\zeta )G^{\omega,\infty}_{\mathtt{s}}(\hat{x},\zeta )\cdot  {U}\,d\zeta.
\end{eqnarray}
Since $U$ is a constant vector and $\hat{x}$ is a unit vector, we can find an orthonormal basis of $\mathbb{R}^3$, i.e., $\{\hat{x},\hat{x}^{\perp_h},\hat{x}^{\perp_v}\}$. Using this fact in \eqref{farfield-expansion-Gomega-s-part-Tnt-eqn-1}, we get
\begin{eqnarray}\label{farfield-expansion-Gomega-s-part-Tnt-eqn}
G^{\omega,\infty}_{\mathtt{s}}(\hat{x},y)\cdot {U}=\dfrac{1}{4\pi\mu}({(U\cdot\hat{x}^{\perp_h})\hat{x}^{\perp_h}+(U\cdot\hat{x}^{\perp_v})\hat{x}^{\perp_v}})e^{-\mathbf{\mathtt{i}}\kappa_{\mathtt{s}}\hat{x}\cdot y}+\int_{\Omega}\omega^2 (\rho_0(\zeta)-\tilde{\rho}_0)\Gamma^\omega(y,\zeta )G^{\omega,\infty}_{\mathtt{s}}(\hat{x},\zeta )\cdot {U}\,d\zeta.\quad
\end{eqnarray}
 Considering the incident wave as the $\mathtt{p}$-incident wave, the $\mathtt{s}_h$ incident wave, and the $\mathtt{s}_v$ incident wave, with the incidence direction being $-\hat{x}$, we have the following respectively, from \eqref{L-S-equation-in-terms-Vt-elastic}:
 
\begin{align}
&V^{t,\mathtt{p}}(y,-\hat{x})=V^i_{\mathtt{p}}(y,-\hat{x})+\int_{\Omega}\omega^2(\rho_0(\zeta )-\tilde{\rho_0})\Gamma^\omega(y,\zeta )
\cdot V^{t,\mathtt{p}}(\zeta ,-\hat{x})\,d\zeta, \label{lippmann-equation-Vt-Vi-terms-incident-direction-elastic}\\
&V^{t,\mathtt{s}_h}(y,-\hat{x})=V^i_{\mathtt{s}_h}(y,-\hat{x})+\int_{\Omega}\omega^2(\rho_0(\zeta )-\tilde{\rho_0})\Gamma^\omega(y,\zeta )
\cdot V^{t,\mathtt{s}_h}(\zeta ,-\hat{x})\,d\zeta, \label{lippmann-equation-Vt-Vi-terms-Ui-s-h}\\
\mbox{and }&V^{t,\mathtt{s}_v}(y,-\hat{x})=V^i_{\mathtt{s}_v}(y,-\hat{x})+\int_{\Omega}\omega^2(\rho_0(\zeta )-\tilde{\rho_0})\Gamma^\omega(y,\zeta )
\cdot V^{t,\mathtt{s}_v}(\zeta ,-\hat{x})\,d\zeta. \label{lippmann-equation-Vt-Vi-terms-Ui-s-v}
\end{align}
By adding \eqref{lippmann-equation-Vt-Vi-terms-Ui-s-h} times $\frac{(U\cdot\hat{x}^{\perp_h})}{4\pi\mu\beta_{2_h}}$ and \eqref{lippmann-equation-Vt-Vi-terms-Ui-s-v} times $\frac{(U\cdot\hat{x}^{\perp_v})}{4\pi\mu\beta_{2_v}}$, we obtain
\begin{eqnarray}
\frac{(U\cdot\hat{x}^{\perp_h})}{4\pi\mu\beta_{2_h}}V^{t,\mathtt{s}_h}(y,-\hat{x})+\frac{(U\cdot\hat{x}^{\perp_v})}{4\pi\mu\beta_{2_v}}V^{t,\mathtt{s}_v}(y,-\hat{x})=\frac{(U\cdot\hat{x}^{\perp_h})}{4\pi\mu\beta_{2_h}}V^i_{\mathtt{s}_h}(y,-\hat{x})+\frac{(U\cdot\hat{x}^{\perp_v})}{4\pi\mu\beta_{2_v}}V^i_{\mathtt{s}_v}(y,-\hat{x})\nonumber\\
+\int_{\Omega}\omega^2(\rho_0(\zeta )-\tilde{\rho_0})\Gamma^\omega(y,\zeta )
\cdot \left[ \frac{(U\cdot\hat{x}^{\perp_h})}{4\pi\mu\beta_{2_h}}V^{t,\mathtt{s}_h}(\zeta ,-\hat{x})+\frac{(U\cdot\hat{x}^{\perp_v})}{4\pi\mu\beta_{2_v}} V^{t,\mathtt{s}_v}(\zeta ,-\hat{x})\right]\,d\zeta. \label{lippmann-equation-Vt-Vi-terms-Ui-s-h-adding-U-i-sv}
\end{eqnarray}

Making use the uniqueness of the solution to the Lippmann Schwinger equation \eqref{lippmann-equation-Vt-Vi-terms-incident-direction-elastic} and by comparing \eqref{lippmann-equation-Vt-Vi-terms-incident-direction-elastic}  with \eqref{farfield-elastic-G-Gamma-p-part}, we derive the required mixed reciprocity relation for $y\in \mathbb{R}^3\setminus \bar{\Omega}$, i.e.,
\begin{eqnarray}
{4\pi(\lambda+2\mu)\beta_1 G^{\omega,\infty}_\mathtt{p}(\hat{x},y)\cdot  {U}= (\hat{x}\cdot U)V^{t,\mathtt{p}}(y,-\hat{x}), \quad y\in \mathbb{R}^3\setminus \bar{\Omega}, \quad \hat{x}\in \mathbb{S}^2.\label{mixed-reciprocity-elastic-lemma-statement-y-outside-omega-1-p-field}}
\end{eqnarray}
Similarly, by comparing \eqref{lippmann-equation-Vt-Vi-terms-Ui-s-h-adding-U-i-sv} and \eqref{farfield-expansion-Gomega-s-part-Tnt-eqn}, we derive
\begin{eqnarray}
     G^{\omega,\infty}_\mathtt{s}(\hat{x},y)\cdot {U}=\frac{(U\cdot\hat{x}^{\perp_h})}{4\pi\mu\beta_{2_h}} V^{t,\mathtt{s}_h}(y,-\hat{x})+\frac{(U\cdot\hat{x}^{\perp_v})}{4\pi\mu\beta_{2_v}} V^{t,\mathtt{s}_v}(y,-\hat{x}),\quad y\in \mathbb{R}^3\setminus \bar{\Omega}, \quad \hat{x}^{\perp_h} \mbox{ and }\hat{x}^{\perp_v}\in \mathbb{S}^2.\label{mixed-reciprocity-elastic-lemma-statement-y-outside-omega-1-s-field}
\end{eqnarray}
The relations \eqref{mixed-reciprocity-elastic-lemma-statement-y-outside-omega-1-p-field} and \eqref{mixed-reciprocity-elastic-lemma-statement-y-outside-omega-1-s-field} hold true as $y$ approaches $\partial \Omega$.
\begin{remark}  Employing similar arguments as above, we can observe that corresponding conormal derivatives also satisfy the mixed reciprocity relation for $y\in\mathbb{R}^3\setminus\Omega$. Consequently, we obtain the following equations satisfied for $y\in\mathbb{R}^3\setminus\Omega$:
\begin{equation}
\left\{
\begin{array}{c c}
\frac{ (\hat{x}\cdot U)}{4\pi(\lambda+2\mu)\beta_1}V^{t,\mathtt{p}}(y,-\hat{x})=G^{\omega,\infty}_\mathtt{p}(\hat{x},y)\cdot U,& y\in \mathbb{R}^3\setminus {\Omega},\\
\frac{ (\hat{x}\cdot U)}{4\pi(\lambda+2\mu)\beta_1}T_\nu V^{t,\mathtt{p}}(y,-\hat{x})=T_{\nu}G^{\omega,\infty}_\mathtt{p}(\hat{x},y)\cdot U,& y\in \mathbb{R}^3\setminus {\Omega},
\end{array}\right.
\label{mixed-reciprocity-elastic-lemma-statement-y-outside-omega-conormal}
\end{equation} 
and 
\begin{equation}
\left\{
\begin{array}{c c}
     G^{\omega,\infty}_\mathtt{s}(\hat{x},y)\cdot  {U}=\frac{(U\cdot\hat{x}^{\perp_h})}{4\pi\mu\beta_{2_h}} V^{t,\mathtt{s}_h}(y,-\hat{x})+\frac{(U\cdot\hat{x}^{\perp_v})}{4\pi\mu\beta_{2_v}} V^{t,\mathtt{s}_v}(y,-\hat{x}), & y\in \mathbb{R}^3\setminus {\Omega},
     \\
     T_{\nu} G^{\omega,\infty}_\mathtt{s}(\hat{x},y)\cdot  {U}=\frac{(U\cdot\hat{x}^{\perp_h})}{4\pi\mu\beta_{2_h}} T_{\nu}V^{t,\mathtt{s}_h}(y,-\hat{x})+\frac{(U\cdot\hat{x}^{\perp_v})}{4\pi\mu\beta_{2_v}} T_{\nu}V^{t,\mathtt{s}_v}(y,-\hat{x}), &y\in \mathbb{R}^3\setminus {\Omega}.
\end{array}
\right.\label{mixed-reciprocity-elastic-lemma-statement-y-outside-omega-conormal-s-h-sv}
\end{equation}
\end{remark}
\end{itemize}
\item[]\textbf{Case 2 - $y\in{\Omega}$ :}
Now we prove the result for the second case, i.e., for $y\in\Omega$.\\
In this case, i.e., for $y\in\Omega$, firstly observing that $G^\omega$ is Green's matrix satisfying \eqref{Greens-function-variable-rho}, and using the representation formula associated with the interior problem, we obtain
  \begin{eqnarray}
  \int_{\partial\Omega}[G^\omega(\zeta,y)\,T_\nu G^\omega(x,\zeta)-T_\nu\,G^\omega(\zeta,y)G^\omega(x,\zeta)]\,ds(\zeta)=G^\omega(x,y), \quad y\in\Omega.\nonumber
  \end{eqnarray}
Making use the asymptotic expansion \eqref {assymptotic-expansion-Green} of $G^\omega$ for $|x|\to\infty$, we have 
\begin{align}
&\int_{\partial\Omega}[G^\omega(\zeta,y)\,T_\nu G^{\omega,\infty}_{\mathtt{s}}(\hat{x},\zeta)-T_\nu\,G^\omega(\zeta,y)G^{\omega,\infty}_{\mathtt{s}}(\hat{x},\zeta)]\,ds(\zeta)=G^{\omega,\infty}_{\mathtt{s}}(\hat{x},y),\label{G-s-infty-boundary-equation}
\\
\mbox{and }&\int_{\partial\Omega}[G^\omega(\zeta,y)\,T_\nu G^{\omega,\infty}_{\mathtt{p}}(\hat{x},\zeta)-T_\nu\,G^\omega(\zeta,y)G^{\omega,\infty}_{\mathtt{p}}(\hat{x},\zeta)]\,ds(\zeta)=G^{\omega,\infty}_{\mathtt{p}}(\hat{x},y).\label{boundary-integral-term-involving-Gomega-for-y-outside}
\end{align}
which further results in the following:
 \begin{align}
G^{\omega,\infty}_{\mathtt{p}}(\hat{x},y)\cdot U
  \underset{\eqref{mixed-reciprocity-elastic-lemma-statement-y-outside-omega-conormal}}{=}& \frac{1}{4\pi(\lambda+2\mu)\beta_1}\int_{\partial\Omega}(\hat{x}\cdot U)[G^\omega(\zeta,y)\cdot T_\nu V^{t,\mathtt{p}}(\zeta,-\hat{x})-T_\nu\,G^\omega(\zeta,y)\cdot V^{t,\mathtt{p}}(\zeta,-\hat{x})]\,ds(\zeta)\label{G-farfield-for-y-in-Omega},\\
G^{\omega,\infty}_{\mathtt{s}}(\hat{x},y)\cdot U\underset{\eqref{mixed-reciprocity-elastic-lemma-statement-y-outside-omega-conormal-s-h-sv}}{=}&\int_{\partial\Omega}[G^\omega(\zeta,y)\,\left[\frac{(U\cdot\hat{x}^{\perp_h})}{4\pi\mu\beta_{2_h}} T_{\nu}V^{t,\mathtt{s}_h}(\zeta,-\hat{x})+\frac{(U\cdot\hat{x}^{\perp_v})}{4\pi\mu\beta_{2_v}} T_{\nu}V^{t,\mathtt{s}_v}(\zeta,-\hat{x})\right]ds(\zeta)
\nonumber\\
&-\int_{\partial\Omega}T_\nu\,G^\omega(\zeta,y)\left[ \frac{(U\cdot\hat{x}^{\perp_h})}{4\pi\mu\beta_{2_h}} V^{t,\mathtt{s}_h}(y,-\hat{x})+\frac{(U\cdot\hat{x}^{\perp_v})}{4\pi\mu\beta_{2_v}} V^{t,\mathtt{s}_v}(y,-\hat{x})\right]\,ds(\zeta).\label{G-s-infty-boundary-equation-after-mixed-recip-rel}
\end{align}
 On the other hand, considering that $V^{t,\mathtt{p}}$, $V^{t,\mathtt{s}_h}$ and $V^{t,\mathtt{s}_v}$ satisfy the equation $\Delta^e V^t(y)+\rho_0(y)\omega^2 V^{t}(y)=0$ in $\Omega$ and $G^\omega$ is Green's matrix satisfying \eqref{Greens-function-variable-rho}, we get the following, for $y\in\Omega$:
  \begin{align}
 V^{t,\mathtt{p}}(y,-\hat{x})=&\int_{\partial\Omega} [G^\omega(\zeta,y)\cdot T_\nu V^{t,\mathtt{p}}(\zeta,-\hat{x})-T_\nu G^\omega(\zeta,y)\cdot V^{t,\mathtt{p}}(\zeta,-\hat{x})]\,ds(\zeta), 
 \label{lippmann-Vt-Green-for-y-in-Omega}\\
 V^{t,\mathtt{s}_h}(y,-\hat{x})=&\int_{\partial\Omega} [G^\omega(\zeta,y)\cdot T_\nu V^{t,\mathtt{s}_h}(\zeta,-\hat{x})-T_\nu G^\omega(\zeta,y)\cdot V^{t,\mathtt{s}_h}(\zeta,-\hat{x})]\,ds(\zeta),
 \label{lippmann-Vt-Green-for-y-in-Omega-sh-incident}\\
 V^{t,\mathtt{s}_v}(y,-\hat{x})=&\int_{\partial\Omega} [G^\omega(\zeta,y)\cdot T_\nu V^{t,\mathtt{s}_v}(\zeta,-\hat{x})-T_\nu G^\omega(\zeta,y)\cdot V^{t,\mathtt{s}_v}(\zeta,-\hat{x})]\,ds(\zeta)
 .\label{lippmann-Vt-Green-for-y-in-Omega-sv-incident}
\end{align}

Now, as it was done in Case 1, by appropriately comparing the equations (\ref{G-farfield-for-y-in-Omega}-\ref{G-s-infty-boundary-equation-after-mixed-recip-rel}) and (\ref{lippmann-Vt-Green-for-y-in-Omega}-\ref{lippmann-Vt-Green-for-y-in-Omega-sv-incident}), we achieve the required mixed reciprocity relations (\ref{lemma-mixed-reciprocity-elastic-p}-\ref{lemma-mixed-reciprocity-relation-s-incident-wave}) for $y\in \Omega$.
\end{enumerate}
 \end{proof}


\section{Appendix} \label{section-appendix} 

\subsection{Appendix A: Reconstruction of the mass density using $\mathtt{s}$-parts of the farfields}


 The reconstruction of mass density is possible by using the $\mathtt{p}$-incident wave, as explained in section \ref{section-Reconstruction-mass-density}. Given this, the immediate question one might have is whether reconstruction is achievable using only the $\mathtt{s}$-incident wave. However, to pursue this, one may need to solve the following nonlinear algebraic system (\ref{non-linear-system-eqn-1}--\ref{non-linear-system-eqn-6}).
 Let us define $\mathbf{X_{1}(\theta)}:=V^{t,\mathtt{s}_h}(z,\theta)\cdot \hat{x}$, $\mathbf{X_{2}(\theta)}:=V^{t,\mathtt{s}_h}(z,\theta)\cdot \hat{x}^{\perp_h}$, $\mathbf{X_{3}(\theta)}:=V^{t,\mathtt{s}_h}(z,\theta)\cdot \hat{x}^{\perp_v}$, $\mathbf{Y_1(\theta)}:=V^{t,\mathtt{s}_v}(z,\theta)\cdot\hat{x}$, $\mathbf{Y_2(\theta)}:=V^{t,\mathtt{s}_v}(z,\theta)\cdot\hat{x}^{\perp_h}$, and $\mathbf{Y_3(\theta)}:=V^{t,\mathtt{s}_v}(z,\theta)\cdot\hat{x}^{\perp_v}$
 
 \begin{align}
     &\frac{\omega^2\omega_{n_0}^2}{\omega_{n_0}^2-\omega^2} \frac{K\,a^3\alpha(z)}{4\pi\mu\beta_{2}}\left[ \mathbf{X_{2}}(\theta)\mathbf{X_{1}}(-\hat{x})+ \mathbf{X_3}(\theta)\mathbf{Y_1}(-\hat{x})\right] = O(a^{\min\{1,2-2h\}})\label{non-linear-system-eqn-1}
     \\
 &\nonumber\\
 &\frac{\omega^2\omega_{n_0}^2}{\omega_{n_0}^2-\omega^2} \frac{K\,a^3\alpha(z)}{4\pi\mu\beta_{2}}\left[ \mathbf{Y_2}(\theta)\mathbf{X_1}(-\hat{x})+\mathbf{Y_3}(\theta)\mathbf{Y_1}(-\hat{x})\right] = O(a^{\min\{1,2-2h\}})\label{non-linear-system-eqn-2}.
\end{align}
\begin{align}
U^{\infty,\mathtt{s}_h}_{\mathtt{s}}(\hat{x},\theta)\cdot \hat{x}^{\perp_h}=&V^{\infty,\mathtt{s}_h}_{\mathtt{s}}(\hat{x},\theta)\cdot \hat{x}^{\perp_h}+\frac{\omega^2\omega_{n_0}^2}{\omega_{n_0}^2-\omega^2} \frac{a^3K\alpha(z)}{4\pi\mu\beta_{2}}\left[\mathbf{X_2}(\theta)\mathbf{X_2}(-\hat{x}) 
     +\mathbf{X_3}(\theta)\mathbf{
     Y_2}(-\hat{x})\right] + O(a^{\min\{1,2-2h\}}),\label{non-linear-system-eqn-3}
\\
U^{\infty,\mathtt{s}_v}_{\mathtt{s}}(\hat{x},\theta)\cdot \hat{x}^{\perp_v}=&V^{\infty,\mathtt{s}_v}_{\mathtt{s}}(\hat{x},\theta)\cdot \hat{x}^{\perp_v}+\frac{\omega^2\omega_{n_0}^2}{\omega_{n_0}^2-\omega^2} \frac{a^3K\alpha(z)}{4\pi\mu\beta_{2}}\left[\mathbf{Y_2}(\theta)\mathbf{X_3}(-\hat{x})+\mathbf{Y_3}(\theta)\mathbf{Y_3}(-\hat{x})\right] + O(a^{\min\{1,2-2h\}})\label{non-linear-system-eqn-4}
\\
     U^{\infty,\mathtt{s}_h}_{\mathtt{s}}(\hat{x},\theta)\cdot \hat{x}^{\perp_v}=&V^{\infty,\mathtt{s}_h}_{\mathtt{s}}(\hat{x},\theta)\cdot \hat{x}^{\perp_v}+\frac{\omega^2\omega_{n_0}^2}{\omega_{n_0}^2-\omega^2} \frac{a^3K\alpha(z)}{4\pi\mu\beta_{2}}\left[\mathbf{X_2}(\theta)\mathbf{X_3}(-\hat{x}))+ \mathbf{X_3}(\theta)\mathbf{Y_3}(-\hat{x})\right] + O(a^{\min\{1,2-2h\}})\label{non-linear-system-eqn-5}
\\
U^{\infty,\mathtt{s}_v}_{\mathtt{s}}(\hat{x},\theta)\cdot \hat{x}^{\perp_h}=&V^{\infty,\mathtt{s}_v}_{\mathtt{s}}(\hat{x},\theta)\cdot \hat{x}^{\perp_h}+\frac{\omega^2\omega_{n_0}^2}{\omega_{n_0}^2-\omega^2} \frac{a^3K\alpha(z)}{4\pi\mu\beta_{2}}\left[\mathbf{Y_2}(\theta)\mathbf{X_2}(-\hat{x})+\mathbf{Y_3}(\theta)\mathbf{Y_2}(-\hat{x})\right] + O(a^{\min\{1,2-2h\}})\label{non-linear-system-eqn-6}
\end{align}
  The above nonlinear 
  algebraic system (\ref{non-linear-system-eqn-1}--\ref{non-linear-system-eqn-6}) is derived from \eqref{s-part-farfield-exp-in-h-incident} and \eqref{s-part-farfield-exp-in-v-incident}, knowing that $s$ parts of the farfields $U^\infty_{\mathtt{s}}$ and $V^\infty_{\mathtt{s}}$ are orthogonal to the observational direction, i.e., $U^\infty_\mathtt{s}\cdot \hat{x}=0$ and $V^\infty_{\mathtt{s}}\cdot \hat{x}=0$ (see \cite{alves2002far} for instance), and considering their components in the direction of $\hat{x}^{\perp_h}$ and $\hat{x}^{\perp_h}$. By considering the backscattered measurements in (\ref{non-linear-system-eqn-1}--\ref{non-linear-system-eqn-6}), one observe that we need to estimate six unknowns $\mathbf{X_i}(-\hat{x})$, $\mathbf{Y_i}(-\hat{x})$, $i=1,2,3$ associated to spherical shaped $D$, in which case  $E_{n_0}^{D_1}$ simplifies to $E_{n_0}^{D_1}=a^3KI_{3\times 3}$, where $K$ is a constant independent of $a$. So far, it is unclear to us how to proceed further.

\subsection{Appendix B: Estimation of the Green's function singularities.}

 \begin{lemma}\label{lemma-fundamental-Gree-properties-inside-D} For $x,y$ in $D$, we have the following properties. For any domain $D$:
 \begin{enumerate}
 \item \label{lemma-Gamma-x,y-bounded-x,y-in-D} $(\Gamma^\omega-\Gamma^0)(x,y) $ is bounded.
 \item $G^\omega(x,y)-\Gamma^0(x,y)$ is bounded .
 \end{enumerate}
\end{lemma}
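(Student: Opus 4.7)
The key observation for part (1) is that the explicit Taylor-type expansion \eqref{entrywise-FM} identifies the Kelvin matrix $\Gamma^0$ precisely with the $n=0$ contribution to both series. Indeed, substituting $c_{\mathtt{s}}^2=\mu/\tilde{\rho_0}$ and $c_{\mathtt{p}}^2=(\lambda+2\mu)/\tilde{\rho_0}$ into the $n=0$ terms of \eqref{entrywise-FM} reproduces exactly the $\gamma_1$ and $\gamma_2$ contributions of \eqref{entrywise-FM-zerof}. Therefore, $\Gamma^\omega-\Gamma^0$ equals the sum of the $n\geq 1$ tails. In the first sum the tail is a multiple of $\delta_{ij}|x-y|^{n-1}$, which is uniformly bounded on the bounded set $\overline{D}$ for $n\geq 1$. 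In the second sum, the $n=1$ term vanishes thanks to the factor $(n-1)$, and for $n\geq 2$ one rewrites
\begin{equation*}
|x-y|^{n-3}(x-y)_i(x-y)_j = |x-y|^{n-1}\,\hat{r}_i\hat{r}_j, \qquad \hat{r}:=\tfrac{x-y}{|x-y|},
\end{equation*}
which is again bounded. The $1/n!$ weight guarantees uniform absolute convergence of each tail on $\overline{D}\times\overline{D}$.

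For part (2), I would first exploit the fact that the zero-frequency Navier equation $\Delta^e \Gamma = -\delta I$ does not involve the mass density at all, so $\Gamma_z^0 \equiv \Gamma^0$. Then it suffices to decompose
\begin{equation*}
G^\omega(x,y)-\Gamma_z^0(x,y) = \bigl[G^\omega(x,y)-\Gamma^\omega(x,y)\bigr] + \bigl[\Gamma^\omega(x,y)-\Gamma^0(x,y)\bigr],
\end{equation*}
where the second bracket is bounded by part (1). Subtracting \eqref{fundamental-homogeneous-background-massdensity-tildrho} from \eqref{Greens-function-variable-rho}, the remainder $R(x,y):=G^\omega(x,y)-\Gamma^\omega(x,y)$ satisfies, in the distributional sense in $x$ for each fixed $y$,
\begin{equation*}
\Delta^e_x R(x,y) + \omega^2\rho_0(x)\,R(x,y) = \omega^2\bigl[\tilde{\rho_0}-\rho_0(x)\bigr]\,\Gamma^\omega(x,y),
\end{equation*}
together with the Kupradze radiation conditions at infinity. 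The $\delta$-singularities of $G^\omega$ and $\Gamma^\omega$ cancel exactly, so $R(\cdot,y)$ is a genuine (non-distributional) weak solution on all of $\mathbb{R}^3$, with a right-hand side exhibiting only the mild $|x-y|^{-1}$ singularity inherited from $\Gamma^\omega$.

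The main technical obstacle is to promote this PDE identity into a pointwise bound on $\overline{D}\times\overline{D}$. My plan is to invoke interior elliptic regularity for the Lam\'e operator $\Delta^e + \omega^2\rho_0$: since $\rho_0 \in C^1$ is Lipschitz and $\Gamma^\omega(\cdot,y)\in L^p_{\mathrm{loc}}(\mathbb{R}^3)$ for every $p<3$, a standard $W^{2,p}$ estimate on a ball containing $\overline{D}$, combined with the Sobolev embedding $W^{2,p}\hookrightarrow C^{0,\alpha}$ valid for $p>3/2$, yields H\"older continuity, hence uniform boundedness, of $R(\cdot,y)$ on $\overline{D}$. Uniformity in the pole $y\in D$ is achieved by a translation-and-scaling argument: the norm $\|\Gamma^\omega(\cdot,y)\|_{L^p(B)}$ on any fixed ball $B\supset \overline{D}$ is independent of $y\in D$. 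Combining this with part (1) yields the boundedness of $G^\omega-\Gamma_z^0$ on $D\times D$, as required.
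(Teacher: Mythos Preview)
Your proposal is correct and follows essentially the same route as the paper: for part (1) you subtract the $n=0$ (Kelvin) term from the series \eqref{entrywise-FM} and bound the tail, and for part (2) you decompose $G^\omega-\Gamma_z^0=(G^\omega-\Gamma^\omega)+(\Gamma^\omega-\Gamma^0)$, derive the same PDE for the first bracket with right-hand side $\omega^2(\tilde{\rho_0}-\rho_0)\Gamma^\omega\in L^p_{\mathrm{loc}}$, and conclude via interior elliptic regularity and Sobolev embedding. The paper uses the specific exponent $p=2$ (so $W^{2,2}_{\mathrm{loc}}\hookrightarrow L^\infty$ in dimension three) and does not spell out the uniformity in the pole $y$ that you address; otherwise the arguments coincide.
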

\begin{proof}
\begin{enumerate}
\item Making use of the representations \eqref{entrywise-FM} and \eqref{entrywise-FM-zerof} of the fundamental matrix $\Gamma^{\omega}(x,y)$ and the Kelvin matrix $\Gamma^{0}(x,y)$ respectively, the $ij^{\mbox{th}}$ entry of $(\Gamma^\omega-\Gamma^0)$ is given by,
\begin{eqnarray}\label{series-repFS-dif-Gw-G0}
(\Gamma^{\omega}-\Gamma^0)_{ij}(x,y)&=&
\sum_{n=0} a_n + \sum_{n=0} b_n
\end{eqnarray}
\begin{eqnarray}
\mbox{with  }\quad a_n&:=&\frac{1}{4\pi\tilde{\rho_0}}\left[{\dfrac{\mathbf{\mathtt{i}}^{n+1}}{(n+3)(n+1)!}\left(\dfrac{n+2}{c_{\mathtt{s}}^{n+3}}+\dfrac{1}{c_{\mathtt{p}}^{n+3}}\right)\omega^{n+1}\delta_{ij}|x-y|^{n}}\right], \mbox{ and }\nonumber\\
 {b_n}&:=&\frac{-1}{4\pi\tilde{\rho_0}}\left[{\dfrac{\mathbf{\mathtt{i}}^{n+1}(n)}{(n+3)(n+1)!}\left(\dfrac{1}{c_{\mathtt{s}}^{n+3}}-\dfrac{1}{c_{\mathtt{p}}^{n+3}}\right)\omega^{n+1}|x-y|^{n-2}(x_i-y_i)(x_j-y_j)}\right]\nonumber.
\end{eqnarray}
Hence to show  {that} $(\Gamma^{\omega}-\Gamma^0)_{ij}(x,y)$ is bounded for $x,y\in D $ it is sufficient to show $\sum|{a_n}|$ and $\sum|{b_n}|$ are convergent, and it holds  true under the assumption $\max\left\{\frac{1}{2}\kappa_\mathtt{s}|x-y|,\frac{1}{2}\kappa_\mathtt{p}|x-y|\right  \}<1$.
{Infact, we can get the uniform bound for the $(\Gamma^{\omega}-\Gamma^0)_{ij}(x,y)$ for $x, y$ in $D$, as follows;}

From \eqref{series-repFS-dif-Gw-G0}, we have  
\begin{eqnarray}
|(\Gamma^\omega-\Gamma^0)_{ij}(x,y)|&\leq&\frac{1}{4\pi\tilde{\rho_0}}\sum_{n=1}^{\infty}{\dfrac{1}{(n+2)n!}\left(\dfrac{n+1}{c_{\mathtt{s}}^{n+2}}+\dfrac{1}{c_{\mathtt{p}}^{n+2}}\right)\omega^{n}\delta_{ij}|x-y|^{n-1}}
\nonumber\\
&&+\frac{1}{4\pi\tilde{\rho_0}}\sum_{n=1}^{\infty}{\dfrac{n-1}{(n+2)n!}\left(\dfrac{1}{c_{\mathtt{s}}^{n+2}}+\dfrac{1}{c_{\mathtt{p}}^{n+2}}\right)\omega^{n}|x-y|^{n-3}|x-y||x-y|}
\nonumber\\
&\leq & \dfrac{1}{4\pi\tilde{\rho_0}}\left[\dfrac{\kappa_{\mathtt{s}}^3}{\omega^2}\sum_{n=1}^{\infty}\frac{2}{(n+2)(n-1)!} (\kappa_{\mathtt{s}}|x-y|)^{n-1}+\dfrac{\kappa_{\mathtt{p}}^3}{\omega^2}\sum_{n=1}^{\infty}\frac{1}{(n+2)(n-1)!}(\kappa_{\mathtt{p}}|x-y|)^{n-1}\right]\nonumber\quad\quad\label{Gamma-w-Gamma-0-ij-mod-before-app-lemma}\\
&\leq  & H_4 \label{Gamma-Gamma0-bounded-for-x-y-in-D}
\end{eqnarray}
\mbox{where, } $H_4:=\dfrac{1}{4\pi\tilde{\rho_0}}\left[\dfrac{2\kappa_{\mathtt{s}}}{c_{\mathtt{s}}^2}\dfrac{1}{1-\frac{1}{2}\kappa_{\mathtt{s}}\,diam(D)}+\dfrac{\kappa_{\mathtt{p}}}{c_{\mathtt{s}}^2}\dfrac{1}{1-\frac{1}{2}\kappa_{\mathtt{p}}\,diam(D)}\right].$ The last inequality in the above holds true under the assumption that `$\frac{1}{2}\max\{\kappa_{\mathtt{s}},\kappa_{\mathtt{p}}\}\,diam(D)<1$' and fact that `$n!\geq 2^{n-1}$, $\forall n\in\mathbb{N}$'. 
\medskip
\item We can prove this results following the procedure adopted from \cite[Lemma 2.3]{alsaedi2016extraction},  \cite[Lemma 3.11]{challa2017mathematical}, \cite{al2016equivalent} and \cite{hahner2001new}).
Write, 
\begin{eqnarray}
\mathcal{H}^\omega(x,y):=G^\omega(x,y)-\Gamma^\omega(x,y).\label{G-omega-in-terms-of-gamma-omega}
\end{eqnarray}
Then, from \eqref{Greens-function-variable-rho} and \eqref{fundamental-homogeneous-background-massdensity-tildrho}, we can observe that $\mathcal{H}^\omega(x,y)$ satisfies the  following equation on $D$,
\begin{eqnarray}
\Delta^e\mathcal{H}^\omega(x,y)+\omega^2\rho_0(y)\mathcal{H}^\omega(x,y)=\omega^2 (\tilde{\rho_0}-\rho_0(y))\Gamma^\omega(x,y),\label{H-omega-in-terms-Green-function-G-var-mass}
\end{eqnarray}
along with the \textit{K.R.C.} 

Since, $\Gamma^\omega(x,y)$ is in $(L^2_{loc}(\mathbb{R}^3))^3$ {, and hence}   $\omega^2(\tilde{\rho_0}-\rho_0(y))\Gamma^\omega(x,y)$ is in $(L^2_{loc}(\mathbb{R}^3))^3$, 
  {then making} use of the interior regularity properties of the Lam\'e system we deduce that $\mathcal{H}^\omega(x,y)$ is in $(W_{loc}^{2,2}(\mathbb{R}^3))^3$ and then by Sobolev embedding that $(W^{2,2}_{loc}(\mathbb{R}^3))^3\, \hookrightarrow \,L^\infty$, we obtain $\mathcal{H}^\omega\in L^\infty$. Thus, $G^\omega-\Gamma^\omega$ is bounded by a constant, say $H_3$, on $D$. Combining this with \eqref{Gamma-Gamma0-bounded-for-x-y-in-D}, we get
\begin{eqnarray}
|G^\omega(x,y)-\Gamma_z^0(x,y)|\leq (H_3+H_4).\label{G-omega-Gamma-omega-bounded-for-x,y-in-D}
\end{eqnarray}
\end{enumerate}
\end{proof}

\begin{remark}\label{lemma-grad-G-for-x-away-from-omega}
For  $x$ away from $\Omega$ and for $y\in D$, we observe that $G_{ij}^\omega(x,y)$, and also $\int_{0}^{1}|\nabla_yG^{\omega}(x,z_1+t(y-z_1))_{ij}|\,dt$, behaves as $O(1)$. 
\end{remark}
\begin{remark}\label{lemma-int-int-D-|y-z|-Gamma-bounded} The term
{$\int_{D_1}\int_{D_1}|y-z|^2|[\Gamma_z^0(x,y)]_{ij}|^2\,dy\,dx$ behaves as $a^6$, precisely,}
\begin{align}
\int_{D_1}\int_{D_1}|y-z|^2|[\Gamma_z^0(x,y)]_{ij}|^2\,dy\,dx
&\leq {diam(B)}^2a^6 C_3, \mbox{ where }{C_3\hspace{-.1cm}}:=\hspace{-.1cm}\left[\dfrac{|\gamma_1|+|\gamma_2|}{4\pi}\right]^2\hspace{-.1cm}\int_{B}\int_B\dfrac{1}{|\xi_1-\xi_2|^2}\,d\xi_1\,d\xi_2. \label{int-D-int-D-|gamma-0-x,y-square}
\end{align}
\end{remark}

\bibliographystyle{abbrv}



\end{document}